\providecommand\@dotsep{5}
\def\listtodoname{List of Todos}
\def\listoftodos{\@starttoc{tdo}\listtodoname}
\numberwithin{equation}{section}
\newtheorem{theorem}{Theorem}[section]
\newtheorem{proposition}[theorem]{Proposition}
\newtheorem{lemma}[theorem]{Lemma}
\newtheorem{corollary}[theorem]{Corollary}
\newtheorem{example}[theorem]{Example}
\newtheorem{claim}[theorem]{Claim}
\newtheorem{remark}{Remark}
\newtheorem{definition}[theorem]{Definition}
\newcommand\R{\mathbb R}
\newcommand\N{\mathbb N}
\begin{document}
	
	\title[New minimax theorems for...]
	{New minimax theorems for lower semicontinuous functions and applications}

	\author{Claudianor O. Alves}
	\author{Giovanni Molica Bisci}
	\author{Ismael S. da Silva$^*$}

	\address[Claudianor O. Alves]{\newline\indent Unidade Acad\^emica de Matem\'atica
		\newline\indent
		Universidade Federal de Campina Grande,
		\newline\indent
		58429-970, Campina Grande - PB - Brazil}
	\email{\href{mailto:coalves@mat.ufcg.edu.br}{coalves@mat.ufcg.edu.br}}

\address[Giovanni Molica Bisci]{Dipartimento di Scienze Pure e Applicate (DiSPeA), Universit\`a degli Studi
	\newline\indent
	di Urbino 	Carlo Bo, Piazza della Repubblica 13, 61029 Urbino (Pesaro e Urbino), Italy}
\email{\href{giovanni.molicabisci@uniurb.it}{giovanni.molicabisci@uniurb.it}}

	\address[Ismael S. da Silva]
	{\newline\indent Unidade Acad\^emica de Matem\'atica
		\newline\indent
		Universidade Federal de Campina Grande,
		\newline\indent
		58429-970, Campina Grande - PB - Brazil}
	\email{\href{ismael.music3@gmail.com}{ismael.music3@gmail.com}}

	\pretolerance10000
	
	
	\begin{abstract}
	\noindent In this paper we prove a version of the Fountain Theorem for a class  of nonsmooth functionals that are sum of a $C^1$ functional and a convex lower semicontinuous functional, and also a version of a theorem due to Heinz for this class of functionals. The new abstract theorems will be used to prove the existence of infinitely many solutions for some elliptic problems whose the associated energy functional is of the above mentioned type. We study problems with logarithmic nonlinearity and a problem involving the 1-Laplacian operator.
\end{abstract}

\thanks{Claudianor Alves was partially supported by  CNPq/Brazil 307045/2021-8 and Projeto Universal FAPESQ 3031/2021, e-mail:coalves@mat.ufcg.edu.br}
\thanks{Ismael da Silva is the corresponding author and he was partially supported by  CAPES, Brazil.}
\subjclass[2019]{Primary:35J15, 35J20; Secondary: 26A27}
\keywords{Fountain Theorem, semicontinuous functional, logarithmic nonlinearity, 1-Laplacian operator}

\maketitle

\section{Introduction}
Several studies in critical point theory concern the existence and multiplicity of solutions for elliptic problems, in this line we have, for example, the famous Bartsch's Fountain Theorem (see \cite[Theorem 3.6]{Willem} and also \cite{Bartsch0}), which ensures the existence and multiplicity of critical points for functionals of class $C^1$ that satisfy some condition of symmetry.

Recently, many works have been focused on to establish variant Fountain Theorems: in 1995, Bartsch-Willem \cite{Bartsch1} proved a dual version for the Fountain Theorem, which was used to show the multiplicity of solutions for a class of problems with concave and convex nonlinearities. Posteriorly, in 2013, Batkam-Colin \cite{Batkam} established a version of Fountain Theorem for indefinite functionals $\varphi \in C^1(X,\mathbb{R})$ ($X$ is a Banach space) by using $\tau$-topology and the degree theory of Kryszewski-Szulkin (see \cite[Chapter 6]{Willem}). This result was improved in 2016 by Gu-Zhou \cite{Gu}. In the approach used in \cite{Batkam}, it is assumed the $\tau$-upper semi-continuity of $\varphi$, while in \cite{Gu}, this condition is not assumed for $\varphi$. We would also like to cite Zou \cite{Zou}, where it was established a version of Fountain Theorem without the classical Palais-Smale's condition. In that paper, it was showed the existence of infinitely many solutions for elliptic problems under no Ambrosetti-Rabinowitz's superquadraticity condition on the nonlinearities. The Fountain Theorem and its variants have been applied in a lot of works, for the reader interested in this subject, we would like to cite \cite{Bartsch1, Batkam, Gu, Liu, Sun, Zou}.

In 1981, Chang developed a study that permits to apply variational method to functionals $\varphi \in Lip_{loc}(X,\mathbb{R})$, i.e, functionals that are locally Lipschitz (see \cite{Chang1}). Using the notion of subdifferential of a locally Lipschitz functional (see Section 2 below), Chang introduced some minimax principles such as a version of the Mountain Pass type for functionals of class $Lip_{loc}(X, \mathbb{R})$. Later, in 2010, Dai \cite{Dai} showed a version of the Bartsch's Fountain Theorem  for functionals $\varphi \in Lip_{loc}(X,\mathbb{R})$ by following the notions found in \cite{Chang1}.

In 1986, Szulkin \cite{Szulkin} enlarged the variational methods for another class  of functionals. More specifically, Szulkin considered functionals belonging to the following class
\begin{itemize}
\item[$(H_0)$] \textit{$I:X \rightarrow (-\infty, \infty],$ $I:=\Phi+\Psi,$ with $\Phi \in C^1(X,\mathbb{R})$ and $\Psi:X\rightarrow (-\infty,\infty]$ is a convex lower semicontinuous functional and $\Psi \not\equiv \infty$.}
\end{itemize}

 A succinct description of the notion of critical point introduced by Szulkin is done in the Section 2 of this work. In \cite{Szulkin}, Szulkin provided a list of minimax results for functionals $I \in (H_0)$ such as, Mountain Pass type theorem, versions of results due to Clark, and Ambrosetti-Rabinowitz type results related to the multiplicity of critical points of even functionals (see also \cite{Motreanu1, Motreanu}). In the references \cite{Alves-de Morais, Alves-Ji,Ji-Szulkin, Kobayashi, Mancini, Squassina-Szulkin} the reader can see how the theory developed in \cite{Szulkin} provides a powerful tool to solve elliptic problems whose associated energy functional is not of class $C^1$.

Motivated by the facts above, the main purpose of this work is to give an answer for the following question:
\\ \\
\textbf{Question 1}: Is it possible to prove a Fountain-type Theorem for functionals $I\in(H_0)$?
\\

In Ji-Szulkin \cite{Ji-Szulkin},  we can find a partial answer for the question above. In that work, the authors studied the existence of multiple solutions for the following logarithmic Schr\"odinger equation
\begin{equation}\label{Eq1}
-\Delta u + V(x)u= u \log u^2,\,\,\,\, x \in \mathbb{R}^N,
\end{equation}
where $V$ is a continuous function and $\displaystyle \lim_{|x| \to +\infty}V(x)=+\infty$, for more details see \cite[Section 1]{Ji-Szulkin}.

The energy functional associated with problem (\ref{Eq1}) is given by

\begin{equation}\label{FunJ}
J_0(u) := \frac{1}{2}\int_{\mathbb{R}^N}(|\nabla u|^2+(V(x)+1)u^2)dx-\frac{1}{2}\int_{\mathbb{R}^N}u^2\log u^2 dx,\,\,\,\, u \in E,
\end{equation}
where $E$ is a convenient subspace of $H^1(\mathbb{R}^N)$. The functional $J_0$ can assume the value $+\infty$ for some $u \in H^1(\mathbb{R}^N)$, in particular, $J_0$ is not of class $C^1$. As made in \cite{Alves-de Morais, Alves-Ji, Squassina-Szulkin}, the functional $J_0$ can be decomposed as a sum of a functional $J_1$ of class $C^1$ and a convex lower semicontinuous functional $J_2:H^1(\mathbb{R}^N)\rightarrow (-\infty,\infty]$, so $J_0 \in (H_0)$. The framework used by the authors of \cite{Ji-Szulkin} consists in showing that $J_0$ satisfies the geometrical conditions of the Fountain Theorem and, by using the ideas presented in \cite{Squassina-Szulkin}, they established a particular deformation lemma that was used to show that the minimax levels associated with Fountain Theorem are critical values of $J_0$. Finally, it is very important to mention that the approach above works well, because the functional $J_0$ verifies the following key property: when the integrals in (\ref{FunJ}) are taken on a bounded domain of $\mathbb{R}^N$, then $J_0$ is a functional of class $C^1$; see Lemma \ref{Psi1} - Part $ii)$. This property is crucial to establish a special deformation lemma, see \cite[Proposition 3.4]{Ji-Szulkin} for more details.

In the present paper we improve the answer to Question 1, more precisely, we show that the answer to Question 1 is affirmative by proving a result that generalizes the Bartsch's Fountain Theorem. Our result is a version of the Bartsch's Fountain Theorem for functionals $I\in(H_0)$; see Theorem \ref{Fountain1}.

In order to prove our version of the Fountain Theorem, we will establish some results that complement the study made in \cite{Szulkin}. First of all, we would like to point out that we will present an equivariant version of the deformation lemma (see the Lemma \ref{EDL} in the sequel), which will be used to establish our main abstract result; see Theorem \ref{Fountain1}. The Bartsch's Fountain Theorem deals with a condition of symmetry that verifies a specific notion of admissibility; see $(G_0)$ in Section 2. Thus, in order to get a general Fountain Theorem, it was necessary to establish an  equivariant version of the deformation lemma involving a notion of admissibility. Our version complements the result of Szulkin in \cite[Corollary 2.4]{Szulkin} that only considered the particular case of antipodal action of $\mathbb{Z}_2$ on $X$.

A second question that has motivated the present paper is the following:\\ \\
\textbf{Question 2}: Is there a version of the dual Fountain Theorem for functionals $I\in(H_0)$?\\

Reading carefully the proof of  \cite[Theorem 3.18]{Willem}, we can note that the main idea of the Bartsch-Willem's dual version of Fountain Theorem consists in applying the usual version of Fountain Theorem to the functional $-I$, which permits to obtain a sequence $(c_j)$ of negative critical values of $I$ with $c_j \to 0.$ However, when  $I \in (H_0)$ we cannot repeat this procedure, because we do not know, in general, if the functional $-I$ belongs to $(H_0)$. Indeed, the condition $-I \in (H_0)$ only holds in the particular case that $\Psi$ and $-\Psi$ are convex and lower semicontinuous. In order to overcome this difficulty,  we present a version of a Heinz's Theorem, \cite[Proposition 2.2]{Heinz}, for functionals $I\in(H_0)$. The original result by Heinz provides an additional condition to a result by Clark (see \cite[Theorem 2.1]{Heinz}), which enables to assure the existence of a sequence $(c_j)$ of negative critical values converging to $0$, and so, in some sense, the Heinz's Theorem works like a dual Fountain Theorem, because it gives the same type of information. Using the notations introduced by Szulkin in \cite[Section 4]{Szulkin} we proved a version of Heinz's Theorem for functionals of the class $(H_0)$; see Theorem \ref{HT}.

Finally, by adapting the reasoning used in the proof of our version of Heinz's Theorem, we were able to prove one more abstract theorem involving the class $(H_0)$, see Theorem \ref{SP}, where it was proved that in \cite[Corollary 4.4]{Szulkin} the sequence of critical levels $c_j$ can be obtained such that $c_j \rightarrow \infty$.

The new minimax theorems mentioned above will be used to establish the existence of infinitely many solutions for some classes of elliptic problems involving a logarithmic nonlinearity. The first application considered in this paper is the following inclusion problem:
$$
\left\{\begin{aligned}
-&\Delta u + u + \partial F(x,u) \ni u\log u^2,  \;\;\mbox{a.e. in}\;\;\mathbb{R}^{N}, \\
& u \in H^{1}(\mathbb{R}^{N}),
\end{aligned}
\right. \leqno{(P_1)}
$$
where $F(x,t):= \displaystyle{\int_{0}^{t}}f(x,t)\,ds$ is a locally Lipschitz function, $f(\cdot,\cdot)$ is a $N$-measurable function satisfying some technical conditions and $\partial F(x,t)$ designates the generalized gradient of $F$ with respect to variable $t$ (see the Section 2 in the sequel for more details). The natural candidate for the energy functional associated with problem $(P_1)$ is given by
\begin{equation}\label{Functional}
I(u):=\frac{1}{2}\int_{\mathbb{R}^N}(|\nabla u|^2+|u|^2)dx + \int _{\mathbb{R}^N}F(x,u)dx - \int_{\mathbb{R}^N}L(u)dx,\,\,\,  u \in H^1(\mathbb{R}^N),
\end{equation}
where
$$
L(t):=\displaystyle{\int_{0}^{t}}s\log s^2\,ds=-\frac{t^2}{2}+\frac{t^2\log t^2}{2}, \quad t \in \mathbb{R}.
$$
However, the reader is invited to see that this functional does not belong to $C^1(H^1(\mathbb{R}^N),\mathbb{R})$, actually, the logarithmic term in (\ref{Functional}) gives the possibility that $I(u)=\infty$, for some $u \in H^1(\mathbb{R}^N)$. Having this in mind, we will prove that $I$ can be decomposed as sum of a $C^1$ functional $\Phi$ and a convex lower semicontinuous functional $\Psi$, which will permit to investigate the existence of multiple critical points of $I$ in the sense of the theory of Szulkin \cite{Szulkin}.

We would like to point out that the approach explored in \cite{Ji-Szulkin} does not work well to solve the problem $(P_1)$,  because we cannot assure that the functional
$$\Psi_2(u):=\int_{\mathbb{R}^N}F(x,u)dx$$
is a $C^1$ functional, even considering the integral on a bounded domain of $\mathbb{R}^N$. Therefore, in order to apply the Fountain Theorem to $I$, it is crucial to use the new version of the Fountain Theorem for the class $(H_0)$, see Theorem \ref{Fountain1}. Here, we show that $I$ has a sequence of critical points $(u_k)$ such that $c_k=I(u_k)\rightarrow \infty$.

In our second application, we will apply our version of the Heinz's result for functionals $I\in (H_0)$, see Theorem \ref{HT}, to prove the existence of infinitely many solutions for a concave perturbation of the logarithmic Schr\"odinger equation with constant potential, namely

$$
\left\{\begin{aligned}
-&\Delta u + u = u\log u^2 + \lambda h(x)|u|^{q-2}u  \;\;\mbox{in}\;\;\mathbb{R}^{N} \\
& u \in H^{1}(\mathbb{R}^{N}),
\end{aligned}
\right. \leqno{(P_2)}
$$
where $\lambda$ is a positive parameter, $q \in (1,2)$ and $h:\mathbb{R}^N\rightarrow \mathbb{R}$ verifies some technical conditions (see the condition $(h_1)$ below). The energy functional associated with $(P_2)$ is given by
$$I_\lambda(u):=\frac{1}{2}\int_{\mathbb{R}^N}(|\nabla u|^2+|u|^2)dx - \int_{\mathbb{R}^N}L(u)dx-\frac{\lambda}{q}\int_{\mathbb{R}^N}h(x){|u|}^{q},\,\,\,  u \in H^1(\mathbb{R}^N).$$
As in the preceding application one has $I_\lambda\in (H_0)$. Related to the above problem, we prove the existence of a sequence of critical points $(u_k)$ of $(P_2)$ with $I_\lambda(u_k)=c_k \rightarrow 0$ when $\lambda$ is small.

Finally, in our last application, we employ Theorem \ref{SP} to establish the existence of infinitely many solutions for the problem
$$
\left\{\begin{aligned}
-&\Delta_1 u = |u|^{p-2}u,  \;\;\mbox{in}\;\;\Omega, \\
& u|_{\Omega}=0, \;\; \mbox{on} \,\, \partial \Omega
\end{aligned}
\right. \leqno{(P_3)}
$$
where $\Omega \subset \mathbb{R}^N$, $N\geq 2$, is a bounded domain with smooth boundary and $p\in (1,1^*)$. The operator $\Delta_1$ is the so-called the \textit{1-Laplacian operator}, which is defined (in a formal sense) by $\displaystyle\Delta_1 u:=\text{div}\left(\frac{\nabla u}{|\nabla u|}\right)$.\par 
 The precise notion of solution for the problem $(P_3)$ will be established in the Subsection 4.3 and it follows the notion introduced by Kawohl-Schuricht in \cite{Kawohl}. Problems involving the 1-Laplacian operator have received special attention in a lot of recent studies, the interested reader can see the works \cite{Alves-Fig-Pimenta,Alves-Pimenta, Pimenta-Fig, Pimenta-Fig1, Degiovanni, Demengel, Chang2, Kawohl, Molino, Ortiz} and the references therein.

The paper is organized as follows: In Section 2 we present some preliminary notions of the nonsmooth analysis and of the group actions which will be used later on. In Section 3, we prove the news minimax theorems for functionals $I \in (H_0)$, while in Section 4 we present the applications.
\\ \\
\textbf{Notation} From now on this paper, otherwise mentioned, we fix
\begin{enumerate}
	\item[$\bullet$] $H_{rad}^1(\mathbb{R}^N):=\{u \in H^1(\mathbb{R}^N);\,\, u \,\,\text{is a radial function}\};$
	\item[$\bullet$] $C_{0,\,rad}^\infty(\mathbb{R}^N):=\{u \in C_0^\infty(\mathbb{R}^N);\,\, u \,\,\text{is a radial function} \};$
	\item[$\bullet$] $\|\cdot\|_p$ denotes the usual norm of the Lebesgue space $L^p(\mathbb{R}^N)$, $p \in [1, \infty];$
	\item[$\bullet$] If $f:\Omega\rightarrow \mathbb{R}$ is a measurable function, with $\Omega \subset \mathbb{R}^N$ a measurable set, then $\displaystyle{\int_{\Omega}} f(x)\,dx$ will be denoted by $\displaystyle{\int_{\Omega}}f\,;$
	\item[$\bullet$] $o_n(1)$ denotes a real sequence with $o_n(1) \rightarrow 0;$
	\item[$\bullet$] $C(x_1,...,x_n)$ denotes a positive constant that depends on $x_1,...,x_n;$
	\item [$\bullet$] $1^*:= \displaystyle\frac{N}{N-1}$, if $N\geq 2$;
	\item[$\bullet$] $2^*:= \displaystyle\frac{2N}{N-2}$, if $N\geq 3$ and $2^*:=\infty$ if $N=1$ or $N=2$.
	
\end{enumerate}

\section{Nonsmooth analysis, group actions and preliminary results}\label{S2}

This section concerns the notions of the nonsmooth analysis and group representations bystanders in the our main results. In what follows, $(X, \|\cdot\|)$ denotes a real Banach space and $(X^*, \|\cdot\|_*)$ its topological dual space. Moreover, let us designate by $G$ any topological group.

\subsection{A primer on Nonsmooth Analysis}
Let us first consider the locally Lipschitz case. For the proof of properties and more details see Chang \cite{Chang1}, Clarke \cite{Clarke, Clarke1}, Carl-Le-Motreanu  \cite{Motreanu1} and Motreanu-Panagiotopoulos \cite[Chapters 1-2]{Motreanu}.	
We say that a functional $F :X \rightarrow \mathbb{R} $ is locally Lipschitz ($F\in Lip_{loc}(X, \mathbb{R})$ in short), if given $u \in X$ there is an open neighborhood $V:=V_{u} \subset X$ of $u$, and a constant $K=K_{u}>0$ such that

\begin{equation*}
|F(v)-F(w)| \leq K \|v-w\|\,\;\; \forall v,w \in V.
\end{equation*}
The generalized directional derivative of $F$ at $u$ in the direction of $v \in X$ is defined by
\begin{equation*}
F^{\circ}(u;v):= \limsup_{\begin{aligned}
	&h\rightarrow 0,\\
	&\delta \rightarrow 0^+
	\end{aligned}}\frac{1}{\delta}\left(F(u+h+\delta v)-F(u+h)\right)
\end{equation*}
and the generalized gradient of $F$ at $u$ as being the set
\begin{equation*}
\partial F(u)= \{\phi \in X^{*}: F^{\circ}(u;v) \geq \left<\phi, v\right>,\,\forall\; v \in X \}.
\end{equation*}
In the sequel, a point $u \in X$ is a critical point of $F$ if $0 \in \partial F(u)$.

For $F\in Lip_{loc}(X, \mathbb{R})$, it holds

$$F^{\circ}(u;v):=\max\{\langle \eta, v\rangle:\, \eta \in \partial F(u)\}.$$

We recall that, when a functional $Q:X\rightarrow\mathbb{R}$ is convex, the subdifferential of $Q$ at $u$ is the set
\label{sd}\begin{equation}
\partial_s Q(u):=\{\phi \in X^*: \, Q(v)-Q(u)\geq \left<\phi,v-u\right>, \forall\,v \in X\}.
\end{equation}
If  $Q$ belongs to $Lip_{loc}(X, \mathbb{R})$, then $\partial_s Q(u)=\partial Q(u)$.

We list some usual properties of the generalized directional derivative and of the generalized gradient.

\begin{lemma}\label{usc} Let $F \in Lip_{loc}(X,\mathbb{R}),$ then
\begin{itemize}
	\item[${i)}$] the map $(u,v) \mapsto F^\circ(u,v)$ is an upper semicontinuous functional, i.e. if $(u_j,v_j) \rightarrow (u,v)$ then
	$$ \limsup F^\circ(u_j,v_j) \leq F^\circ(u,v);$$
	\item[${ii)}$] $F^\circ(u,-v)=(-F)^\circ(u,v).$
\end{itemize}
\end{lemma}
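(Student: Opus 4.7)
My plan is to prove the two items separately, with (i) being the substantive one and (ii) being a short change of variables.

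For part (i), I would pick, for each $j$, an element $h_j \in X$ and a scalar $\delta_j > 0$ with $\|h_j\| < 1/j$ and $0 < \delta_j < 1/j$ such that
\[
\frac{F(u_j + h_j + \delta_j v_j) - F(u_j + h_j)}{\delta_j} > F^\circ(u_j, v_j) - \frac{1}{j};
\]
this is possible by the very definition of $F^\circ(u_j, v_j)$ as a limit superior. The main algebraic move is then to rewrite both arguments of $F$ so that the "base point" is $u$ and the "direction" is $v$ (not $v_j$). Setting
\[
\tilde h_j := (u_j - u) + h_j + \delta_j(v_j - v),
\]
one has $u_j + h_j + \delta_j v_j = u + \tilde h_j + \delta_j v$, while $u_j + h_j$ differs from $u + \tilde h_j$ by exactly $\delta_j(v_j - v)$. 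Hence after inserting $\pm F(u + \tilde h_j)$ and using the local Lipschitz constant $K$ of $F$ on a neighborhood of $u$, I get, for all large $j$,
\[
\frac{F(u_j + h_j + \delta_j v_j) - F(u_j + h_j)}{\delta_j} \le \frac{F(u + \tilde h_j + \delta_j v) - F(u + \tilde h_j)}{\delta_j} + K\|v_j - v\|.
\]
Since $\tilde h_j \to 0$ and $\delta_j \to 0^+$, taking $\limsup$ on the right-hand side is controlled by $F^\circ(u,v)$, and the error term $K\|v_j-v\|$ vanishes. Combining with the choice of $h_j, \delta_j$ yields $\limsup_j F^\circ(u_j, v_j) \le F^\circ(u, v)$.

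For part (ii), I would rewrite
\[
(-F)^\circ(u, v) = \limsup_{h \to 0,\, \delta \to 0^+} \frac{F(u+h) - F(u + h + \delta v)}{\delta}
\]
and perform the substitution $\tilde h := h + \delta v$, noting that $\tilde h \to 0$ iff $h \to 0$ (with $\delta \to 0^+$ fixed as a separate variable), and that $(h,\delta) \mapsto (\tilde h, \delta)$ is a bijection on the admissible region. Under this change, $u + h = u + \tilde h + \delta(-v)$ and $u + h + \delta v = u + \tilde h$, so the quotient becomes
\[
\frac{F(u + \tilde h + \delta(-v)) - F(u + \tilde h)}{\delta},
\]
whose $\limsup$ as $\tilde h \to 0$ and $\delta \to 0^+$ is $F^\circ(u, -v)$ by definition.

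The only genuine obstacle is the bookkeeping in (i): one has to be careful to introduce $\tilde h_j$ so that both $u_j + h_j + \delta_j v_j$ and $u_j + h_j$ get matched (up to a term that is $O(\delta_j)$ after the Lipschitz bound) with expressions of the form $u + (\text{small}) + \delta_j v$ and $u + (\text{small})$, so that the $\limsup$ over these matched expressions is exactly $F^\circ(u,v)$. Everything else is just the definitions.
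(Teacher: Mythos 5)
Your proof is correct. The paper does not actually prove Lemma \ref{usc} --- it is recalled as a standard property of the Clarke generalized directional derivative with references to Chang, Clarke and Motreanu--Panagiotopoulos --- and your argument (re-centering the difference quotient at $u$ via $\tilde h_j$ plus a local Lipschitz estimate for the $v_j\to v$ error in part (i), and the substitution $\tilde h = h+\delta v$ in part (ii)) is precisely the classical proof found in those references, so there is nothing to fix.
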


\begin{lemma}
	If $F$ is continuously Fréchet differentiable in an open neighborhood of $u \in X$, we have $\partial F(u)= \{F'(u)\}$.
\end{lemma}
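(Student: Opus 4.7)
The plan is to first compute the generalized directional derivative $F^\circ(u;v)$ explicitly and show it coincides with $\langle F'(u),v\rangle$; then membership of $F'(u)$ in $\partial F(u)$ is immediate and uniqueness follows by plugging in $\pm v$.

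First I would note that, since $F$ is $C^1$ on an open neighborhood $V$ of $u$, a standard argument (boundedness of $F'$ on a sufficiently small closed ball combined with the fundamental theorem of calculus for Banach-valued integrals) gives that $F$ is locally Lipschitz on a neighborhood of $u$, so $F^\circ(u;\cdot)$ and $\partial F(u)$ are well defined.

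Next, for any $v \in X$ and all sufficiently small $h \in X$ and $\delta > 0$, the mean value identity yields
\begin{equation*}
\frac{1}{\delta}\bigl(F(u+h+\delta v) - F(u+h)\bigr) = \int_0^1 \langle F'(u+h+t\delta v), v\rangle\, dt.
\end{equation*}
Because $F'$ is continuous at $u$, for any $\varepsilon>0$ one can choose $h$ and $\delta$ small enough so that $\|F'(u+h+t\delta v) - F'(u)\|_*<\varepsilon$ uniformly in $t\in[0,1]$. Passing to the limsup as $h\to 0$ and $\delta\to 0^+$ gives
\begin{equation*}
F^\circ(u;v) = \langle F'(u), v\rangle, \qquad \forall\, v \in X.
\end{equation*}

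Finally, I conclude the lemma by two observations. Since $\langle F'(u),v\rangle = F^\circ(u;v) \geq \langle F'(u),v\rangle$ trivially holds for every $v$, we have $F'(u) \in \partial F(u)$. Conversely, if $\phi \in \partial F(u)$, then by definition $\langle \phi,v\rangle \leq F^\circ(u;v) = \langle F'(u),v\rangle$ for all $v \in X$; replacing $v$ by $-v$ gives the reverse inequality $\langle F'(u),v\rangle \leq \langle \phi,v\rangle$. Therefore $\langle \phi,v\rangle = \langle F'(u),v\rangle$ for every $v \in X$, i.e., $\phi = F'(u)$, which establishes $\partial F(u) = \{F'(u)\}$.

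The only delicate point is the uniform control of the integrand used to evaluate the double limsup; once the continuity of $F'$ is invoked to replace the mean value integrand by $\langle F'(u),v\rangle$ uniformly in $t$, the rest of the proof is a one-line algebraic manipulation, so I do not expect any real obstacle.
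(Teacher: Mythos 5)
Your proof is correct: the mean value identity plus continuity of $F'$ gives $F^\circ(u;v)=\langle F'(u),v\rangle$, and the $\pm v$ argument then pins down $\partial F(u)=\{F'(u)\}$ exactly as required by the definition $\partial F(u)=\{\phi: F^\circ(u;v)\ge\langle\phi,v\rangle\ \forall v\}$. The paper states this lemma without proof as a standard fact, citing Chang and Clarke, and your argument is precisely the classical one found in those references, so there is nothing to compare beyond noting agreement.
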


\begin{lemma}\label{S}
	If $F,\,Q \in Lip_{loc}(X, \mathbb{R})$, then for each $u \in X$ one has
\begin{itemize}
	\item [$i)$]$\partial(F+Q)(u) \subset \partial F(u) + \partial Q(u);$
	\item [$ii)$]$\partial(F+Q)(u) = \{F'(u)\} + \partial Q(u)$, if $F \in C^1(X,\mathbb{R})$.
\end{itemize}
\end{lemma}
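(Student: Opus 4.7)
For part $(i)$, the plan is to reduce the statement to a Hahn--Banach sandwich argument applied to the two sublinear functionals $F^\circ(u;\cdot)$ and $Q^\circ(u;\cdot)$. First I would verify the subadditivity
$$
(F+Q)^{\circ}(u;v)\leq F^{\circ}(u;v)+Q^{\circ}(u;v),\qquad \forall v\in X,
$$
which is immediate from the definition of the generalized directional derivative together with the elementary inequality $\limsup(a+b)\leq \limsup a+\limsup b$, provided the two limsups are finite (which they are, by the local Lipschitz hypothesis on $F$ and $Q$). Consequently, if $\zeta\in\partial(F+Q)(u)$, then $\langle\zeta,v\rangle\leq F^{\circ}(u;v)+Q^{\circ}(u;v)$ for every $v\in X$.

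Next I would construct a decomposition $\zeta=\phi_{F}+\phi_{Q}$ with $\phi_{F}\in\partial F(u)$ and $\phi_{Q}\in\partial Q(u)$. Define on $X$ the map $r(v):=\langle\zeta,v\rangle-Q^{\circ}(u;v)$. Using the positive homogeneity and subadditivity of $Q^{\circ}(u;\cdot)$, the functional $r$ is positively homogeneous and superadditive, while $F^{\circ}(u;\cdot)$ is sublinear; by the previous step one has $r(v)\leq F^{\circ}(u;v)$ for all $v\in X$. The Hahn--Banach sandwich theorem then yields a linear functional $\phi_{F}:X\to\mathbb{R}$ with
$$
r(v)\leq \phi_{F}(v)\leq F^{\circ}(u;v),\qquad \forall v\in X.
$$
The right inequality, together with the fact that $F^{\circ}(u;\cdot)$ is bounded on a neighborhood of $0$ (by the local Lipschitz property), shows $\phi_{F}\in X^{*}$ and $\phi_{F}\in\partial F(u)$. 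Setting $\phi_{Q}:=\zeta-\phi_{F}$, the left inequality gives $\langle\phi_{Q},v\rangle\leq Q^{\circ}(u;v)$ for every $v$, hence $\phi_{Q}\in\partial Q(u)$. This proves $(i)$.

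For part $(ii)$, I would use the previous lemma, which gives $\partial F(u)=\{F'(u)\}$ whenever $F$ is $C^{1}$ near $u$. The inclusion $\subset$ then follows directly from $(i)$. For the reverse inclusion, I would first observe that when $F\in C^{1}$ one has equality in the subadditivity step, namely
$$
(F+Q)^{\circ}(u;v)=\langle F'(u),v\rangle+Q^{\circ}(u;v),
$$
because the Fréchet differentiability of $F$ upgrades the limsup in the first summand to a genuine limit equal to $\langle F'(u),v\rangle$. Given any $\phi\in\partial Q(u)$, this identity yields
$$
\langle F'(u)+\phi,v\rangle=\langle F'(u),v\rangle+\langle\phi,v\rangle\leq \langle F'(u),v\rangle+Q^{\circ}(u;v)=(F+Q)^{\circ}(u;v),
$$
so that $F'(u)+\phi\in\partial(F+Q)(u)$, as required.

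The main obstacle is the Hahn--Banach step in $(i)$: one must be careful to check that the linear functional produced by the sandwich theorem is actually continuous, which is where the local Lipschitz bound on $F^{\circ}(u;\cdot)$ near the origin enters crucially; the rest of the argument is a routine bookkeeping of the definitions.
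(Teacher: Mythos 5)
The paper does not prove Lemma \ref{S} at all: it is listed among the ``usual properties'' of the generalized gradient and delegated to the references (Chang, Clarke, Carl--Le--Motreanu, Motreanu--Panagiotopoulos). Your argument is correct, and it is worth noting how it differs from the standard proof in those sources. The classical route for part $i)$ uses the fact that $\partial F(u)$ is a nonempty convex weak$^*$-compact set whose support function is $F^{\circ}(u;\cdot)$ (this is exactly the identity $F^{\circ}(u;v)=\max\{\langle\eta,v\rangle:\eta\in\partial F(u)\}$ quoted in the paper): one checks the subadditivity $(F+Q)^{\circ}(u;\cdot)\leq F^{\circ}(u;\cdot)+Q^{\circ}(u;\cdot)$, observes that the right-hand side is the support function of the weak$^*$-compact convex set $\partial F(u)+\partial Q(u)$, and concludes by the separation form of Hahn--Banach that pointwise inequality of support functions forces the inclusion of the corresponding sets. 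You instead take a fixed $\zeta\in\partial(F+Q)(u)$ and manufacture its decomposition directly via the Mazur--Orlicz/Hahn--Banach sandwich theorem applied to the superlinear minorant $r(v)=\langle\zeta,v\rangle-Q^{\circ}(u;v)$ of the sublinear $F^{\circ}(u;\cdot)$; this avoids having to establish weak$^*$-compactness of the sum $\partial F(u)+\partial Q(u)$ and the support-function duality, at the cost of invoking the sandwich form of Hahn--Banach. Your verifications of the two points that could go wrong are the right ones: the continuity of $\phi_F$ follows from $|\phi_F(v)|\leq F^{\circ}(u;v)\vee F^{\circ}(u;-v)\leq K_u\|v\|$, and in part $ii)$ the upgrade of the limsup to a genuine limit, $(F+Q)^{\circ}(u;v)=\langle F'(u),v\rangle+Q^{\circ}(u;v)$, is exactly what makes the reverse inclusion work (it uses the \emph{continuous} differentiability of $F$, via the mean value theorem applied on the segment from $u+h$ to $u+h+\delta v$, not merely differentiability at $u$). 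Both approaches ultimately rest on Hahn--Banach, and yours is, if anything, the more self-contained of the two.
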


Next, we recall a lemma that establishes an important property between $F^\circ(u,v)$ and the classical directional derivatives of $F$ at $u\in X$ along $v\in X,$ i.e.,
	\begin{equation}
	\displaystyle{\frac{\partial F}{\partial v}}(u):= \lim_{t\rightarrow 0^+}\frac{F(u+tv)-F(u)}{t}.
	\end{equation}

\begin{lemma}\label{G}
	If $F:X\rightarrow\mathbb{R}$ is a convex function and $F \in Lip_{loc}(X,\mathbb{R})$, then $\displaystyle{\frac{\partial F}{\partial v}}(u)$ exists for any $u, v \in X$ and $\displaystyle{\frac{\partial F}{\partial v}}(u)=F^\circ(u,v)$.
\end{lemma}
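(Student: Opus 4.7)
My plan is to exploit the well-known monotonicity of the convex difference quotient together with the local Lipschitz bound to get both existence and the identification with $F^\circ$.

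First, I would prove existence of $\frac{\partial F}{\partial v}(u)$. The key observation is that for a convex $F$, the function
$$g(t):=\frac{F(u+tv)-F(u)}{t},\quad t>0,$$
is non-decreasing in $t$. This follows from writing, for $0<s<t$, $u+sv=\tfrac{s}{t}(u+tv)+(1-\tfrac{s}{t})u$ and applying convexity, which yields $g(s)\le g(t)$ after rearrangement. Since $F$ is locally Lipschitz near $u$ with constant $K$, we have $|g(t)|\le K\|v\|$ for small $t>0$, so $g$ is bounded below. A monotone bounded function has a limit, so $\frac{\partial F}{\partial v}(u)=\lim_{t\to 0^+}g(t)$ exists and is finite.

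Next I would establish the identity $F^\circ(u,v)=\frac{\partial F}{\partial v}(u)$. The inequality $F^\circ(u,v)\ge \frac{\partial F}{\partial v}(u)$ is immediate: specializing $h=0$ in the $\limsup$ that defines $F^\circ(u,v)$ already produces the directional derivative. For the reverse inequality, fix sequences $h_n\to 0$ and $\delta_n\to 0^+$. For any fixed $t>0$, once $\delta_n<t$ the same convexity argument (applied now at the base point $u+h_n$ in place of $u$) gives
$$\frac{F(u+h_n+\delta_n v)-F(u+h_n)}{\delta_n}\le \frac{F(u+h_n+tv)-F(u+h_n)}{t}.$$
Passing to the $\limsup$ in $n$ and using continuity of $F$ (guaranteed by the local Lipschitz condition), the right-hand side tends to $\frac{F(u+tv)-F(u)}{t}$. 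Since this holds for every $t>0$, letting $t\to 0^+$ and taking the supremum over all admissible sequences $(h_n,\delta_n)$ yields $F^\circ(u,v)\le \frac{\partial F}{\partial v}(u)$.

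The only delicate step is really the swap of the $\limsup$ in $n$ with the passage $t\to 0^+$; everything else is either the monotonicity trick for convex functions or the uniform Lipschitz control. I expect no serious obstacle, since both ingredients are classical and the two limit procedures decouple cleanly: first freeze $t$, send $n\to\infty$, then let $t\to 0^+$.
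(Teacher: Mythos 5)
Your proof is correct. The paper does not prove Lemma \ref{G} at all --- it is recalled as a classical fact with a pointer to Chang, Clarke, and Motreanu--Panagiotopoulos --- and your argument is precisely the standard one from that literature: monotonicity of the convex difference quotient plus the local Lipschitz bound for existence, the choice $h=0$ for one inequality, and the difference-quotient bound at the shifted base point $u+h_n$ followed by continuity for the other. One small remark: the step you flag as delicate is not actually an interchange of limits --- after taking $\limsup_n$ the left-hand side no longer depends on $t$, so letting $t\to 0^+$ on the right is just taking an infimum over a family of valid upper bounds, and nothing needs to commute.
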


From now on, we say that $I:X\rightarrow (-\infty, \infty]$ belongs to the class $(H_0)$ if $I$ satisfies:
\\ \\
$I:=\Phi+\Psi$, with $\Phi \in C^1(X,\mathbb{R})$ and $\Psi:X\rightarrow (-\infty,\infty]$ is a convex lower semicontinuous functional and does not satisfy $\Psi \equiv \infty$.
\\\\
In particular, $I$ is a lower semicontinuous (l.s.c.) functional.

The \textit{effective domain} of $I$ is defined by
\begin{equation*}
D(I):=\{u \in X: \,I(u) < \infty\},
\end{equation*}
and so, for $I \in (H_0)$ it is true that $D(I)=D(\Psi)$.
For each $u \in D(I)$, we say that the subdifferential of $I$ at $u$ is the set
\begin{equation}
\partial I(u) := \{\varphi \in X^*:\,\,\left<\Phi'(u), v-u\right> + \Psi(v)-\Psi(u) \geq \left<\varphi,v-u\right>,\, \forall v \in X \}.
\end{equation}	

\begin{definition}\label{cp}
	Suppose that $I\in (H_0)$ with $I:=\Phi + \Psi$. Then
\begin{itemize}
	\item[$i)$] a point $u \in X$ is called a critical point of $I$ if $0 \in \partial I(u),$ or more precisely, $u \in D(I)$ and
	\begin{equation*}
	\left<\Phi'(u), v-u\right> + \Psi(v)-\Psi(u) \geq 0\,\, \,\,\forall v \in X,
	\end{equation*}
	\item[$ii)$] a sequence $(u_n)$ is called a Palais-Smale sequence $($briefly $(\rm PS)$ sequence$)$ for $I$ at level $c \in \mathbb{R}$ if $I(u_n) \rightarrow c$ and
	\begin{equation*}
	\left<\Phi'(u_n), v-u_n\right> + \Psi(v)-\Psi(u_n) \geq -\varepsilon_n\|v-u_n\|\,\,\,\, \forall v \in X,
	\end{equation*}
	with $\varepsilon_n\rightarrow 0^+$, or equivalently (see, \cite[Proposition 1.2]{Szulkin}) 
	\begin{equation*}
	\left<\Phi'(u_n), v-u_n\right> + \Psi(v)-\Psi(u_n) \geq \langle w_n,v-u_n \rangle \,\,\,\, \forall v \in X,
	\end{equation*}
	where $w_n \in X^*$ with $w_n \rightarrow 0$ in $X^*;$
	\item[$iii)$] $I$ satisfies the Palais-Smale condition $($briefly $(\rm PS)$ condition$)$ at level $c\in \mathbb{R}$ when each $(\rm PS)$ sequence $(u_n)$ at level $c$ has a convergent subsequence. If $I$ verifies the $(\rm PS)$ condition for all level $c$, we say simply that $I$ satisfies the $(\rm PS)$ condition.
\end{itemize}
\end{definition}
For further details about the critical point theory described above see Szulkin \cite{Szulkin}, Carl-Le-Motreanu \cite{Motreanu1} and  Motreanu-Panagiotopoulos \cite{Motreanu}.

Now, let us recall the classical Ekeland's Variational Principle \cite[Theorem 1]{Ekeland} that will be useful in the sequel.

\begin{theorem}\label{PVE}
	Let $(Y,d)$ be a complete metric space. Suppose that $\varphi:Y\rightarrow(-\infty,\infty]$ is a proper l.s.c. functional bounded below. Given $\delta$, $\tau>0$ and $u_0 \in Y$ such that
	\begin{equation}
	\displaystyle{\inf_{u\in Y}}\varphi(u) \leq \varphi(u_0)\leq\displaystyle{\inf_{u\in Y}}\varphi(u)+\delta,
	\end{equation}
	then there is $v_0 \in Y$ verifying
\begin{itemize}
	\item [$i)$]$\varphi(v_0)\leq\varphi(u_0),\,\,\,d(v_0,u_0)\leq {1}/{\tau};$
	\item [$ii)$]$\varphi(v)-\varphi(v_0)\geq-\delta\tau d(v,v_0),\, \forall v \in Y$.
\end{itemize}
\end{theorem}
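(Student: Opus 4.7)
The plan is to prove this classical form of Ekeland's principle by the standard Bishop--Phelps ordering argument: introduce a partial order on $Y$ that encodes the desired ``strong minimum'' inequality, and produce $v_0$ as the limit of a suitably chosen chain.

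First, I would define on $Y$ the relation $u \preceq v \iff \varphi(v) + \delta\tau d(u,v) \le \varphi(u)$. Reflexivity is immediate, antisymmetry follows by adding the two inequalities (forcing $d(u,v)=0$), and transitivity is a one--line application of the triangle inequality. For any $u \in Y$ with $\varphi(u) < \infty$, the ``forward cone'' $S(u) := \{w \in Y : u \preceq w\}$ is nonempty (it contains $u$) and closed, the closedness being a direct consequence of the lower semicontinuity of $\varphi$ and the continuity of $d(u,\cdot)$. Since $\varphi$ is bounded below, $\alpha(u) := \inf_{w \in S(u)} \varphi(w)$ is finite.

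Starting from the given $u_0$, I would build inductively a sequence $(u_n)$ with $u_n \preceq u_{n+1}$ by choosing $u_{n+1} \in S(u_n)$ such that $\varphi(u_{n+1}) \le \alpha(u_n) + 2^{-n}$. By transitivity this sequence is increasing in $\preceq$, and the defining inequality yields
\begin{equation*}
\delta\tau\, d(u_n, u_{n+p}) \le \sum_{k=n}^{n+p-1}\delta\tau\, d(u_k,u_{k+1}) \le \varphi(u_n)-\varphi(u_{n+p}).
\end{equation*}
Because $(\varphi(u_n))$ is nonincreasing and bounded below, it converges, so $(u_n)$ is Cauchy; by completeness it converges to some $v_0 \in Y$. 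Passing to the limit $p\to\infty$ in $u_n \preceq u_{n+p}$ and using closedness of $S(u_n)$ gives $u_n \preceq v_0$ for every $n$. In particular $u_0 \preceq v_0$ yields $\varphi(v_0) \le \varphi(u_0)$ and $\delta\tau\, d(v_0,u_0) \le \varphi(u_0)-\varphi(v_0) \le \varphi(u_0)-\inf_Y \varphi \le \delta$, hence $d(v_0,u_0) \le 1/\tau$, which is property $i)$.

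For property $ii)$, I would argue by contradiction: suppose there exists $v \neq v_0$ with $\varphi(v)-\varphi(v_0) < -\delta\tau\, d(v,v_0)$, i.e.\ $v_0 \preceq v$ strictly. Transitivity with $u_n \preceq v_0$ gives $v \in S(u_n)$ for all $n$, so $\varphi(v) \ge \alpha(u_n) \ge \varphi(u_{n+1}) - 2^{-n}$. Letting $n\to\infty$ and using lower semicontinuity of $\varphi$ at $v_0$ with $u_n \to v_0$, one obtains $\varphi(v_0) \le \liminf_n \varphi(u_{n+1}) \le \varphi(v)$, contradicting the strict inequality $\varphi(v) < \varphi(v_0)$. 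The main technical point I expect to watch carefully is the closedness of the cones $S(u_n)$ and the precise tolerance $2^{-n}$ in the inductive choice; everything else is bookkeeping with the partial order.
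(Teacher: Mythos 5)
Your proof is correct. Note that the paper does not prove this statement at all: it simply recalls it as the classical Ekeland Variational Principle with a citation to Ekeland's original article, so there is no in-paper argument to compare against. Your Bishop--Phelps ordering argument is the standard proof and is sound as written: the relation $u \preceq v \iff \varphi(v)+\delta\tau\, d(u,v)\le\varphi(u)$ is a partial order on the effective domain (which is where everything lives, since $\varphi(u_0)\le\inf_Y\varphi+\delta<\infty$ forces $\varphi(u_n)<\infty$ for all $n$), the cones $S(u_n)$ are closed by lower semicontinuity, the telescoping estimate gives the Cauchy property, and the constant $\delta\tau$ in the order is exactly what is needed to recover both $d(v_0,u_0)\le 1/\tau$ from $\delta\tau\, d(u_0,v_0)\le\varphi(u_0)-\inf_Y\varphi\le\delta$ and the slope bound in $ii)$.
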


\subsection{Group Actions.}

The notions described in this subsection follow closely the presentation in \cite[Sections 1.6 and 3.2]{Willem}; see also Bartsch \cite{Bartsch} for additional comments and remarks. Let $G$ be a topological group with neutral element $e$. An action of $G$ on $X$ is a continuous function
\begin{equation*}
\begin{aligned}
\phi: \,&G\times X&\rightarrow &\,X\\
\,&(g,v)&\mapsto\,&\phi(g,v)=gv
\end{aligned}
\end{equation*}
such that 
\begin{itemize}
\item[$(G_1)$] $ev = v,\,\, \forall x \in X$;
\item[$(G_2)$] $(gh)v=g(hv),\,\, \forall v \in X,\, \forall g,h \in G$;
\item[$(G_3)$]  \it{For each $g \in G$ the map
\begin{equation*}
\begin{aligned}
\phi_g:\,&X\,&\rightarrow\,\, &\,X \\
\,&v &\mapsto\,\, &\phi_g(v)=gv
\end{aligned}
\end{equation*}
is linear.}
\end{itemize}
\indent If in addition to the above condition, the following relation holds 
\begin{itemize}
\item[$(G_4)$] $\|gv\|=\|v\|,\,\, \forall v \in X,\,\,\forall g \in G $
\end{itemize}
the map $\phi$ is said to be an isometric action. According to the above definitions, we say that $G$ acts isometrically on $X$ when $(G_1)-(G_4)$ hold.

The subspace of invariant elements of $X$ is defined by
$$Fix(G):= \{u\in X:\, gu=u\,\,\,\forall g \in G\}.$$
\begin{example}\label{ex1}
	If $Id$ denotes the identity map on $X$ and considering the representation \linebreak $\mathbb{Z}_2 = \{Id, -Id\}$, it is standard to check that $\mathbb{Z}_2$ acts isometrically on $X$.
\end{example}

A subset $A$ of $X$ is said to be \textit{$G$-invariant} if $gA=A$ for every $g \in G$, where $gA:= \{gx:\,\,x\in A\}$. Also, when $A \subset X$ is a $G$-invariant set, a map $\gamma:A\rightarrow X$ is called \textit{equivariant map} if
$$ \gamma(gx)=g\gamma(x)\,\,\,\, \forall x \in A,\, \forall g \in G.$$
If a functional (not necessarily linear) $\varphi$ defined on $X$ satisfies $\varphi(gx)=\varphi(x)$ for any $x \in X$ and $g \in G$, we say that $\varphi$ is a $G$-invariant functional.
\\
\\
\textbf{Notation}: $\Gamma_G(A):= \{\gamma \in C(A,X):\, \gamma \, \text{is equivariant}\}.$
\\

By following \cite[Section 3.2]{Willem} and \cite{Bartsch1}, the notion of admissible action is given below.

\begin{definition}
	Let $Y$ be a finite dimensional vector space. Moreover, let us assume that $G$ is a compact topological group that acts diagonally on $Y^k,$ that is
	$$ gv = g(v_1,...,v_k) = (gv_1,...,gv_k), $$
for every $v=(v_1,...,v_k) \in Y^k$ and each $ g \in G$.
	The action of  $G$ on $Y$ is said to be admissible if, for each equivariant map $\gamma:\partial U \rightarrow Y^{k-1},$ where $k\geq 2$ and $U$ is a bounded $G$-invariant open set of $Y^k$ with $0 \in U,$ there is $u \in \partial U$ such that $\gamma(u) = 0$.
\end{definition}

For our goals we will consider a special condition on a decomposition of space $X$ with respect to action of $G$ on $X$ as follows:
\begin{itemize}
\item[$(G_0)$] \it{ $G$ is a compact group that acts isometrically on $$X=\overline{\displaystyle{\bigoplus_{j \in \mathbb{N}}} X_j},$$ where every $X_j$ is a $G$-invariant subspace of $X$ such that $X_j \cong Y,$ being $Y$ a finite dimensional vector space for which the action of $G$ is admissible.}
\end{itemize}

We also emphasize that a key point in our approach is given by the \textit{Haar integral} on a topological group $G$. For the sake of completeness a brief description of this abstract concept will be given below; see Nachbin \cite{Nachbin} for additional comments and details. Suppose that $G$ is a locally compact group and $\mu$ a positive measure on $G$. According to the classical literature on the subject, $\mathcal{L}(G,\mu)$ denotes here the space of the integrable functions on the group $G$ with respect to the measure $\mu$. We say that $\mu$ is a \textit{left invariant} if 
\begin{equation}\label{Haar4}
\int_G f(g^{-1}y) d\mu = \int_G f(y)d\mu,\,\, \forall g \in G,
\end{equation}
for every $f \in \mathcal{L}(G,\mu)$. Here, we would like to mention that the integral above is a version of the Bochner integral to the Haar integral.  By repeating an analogous procedure as made in the building of Bochner integral (see e.g. \cite[Apenddix E and refrences therein]{Evans}), the reader can note that it is possible, by using the measure $\mu$, to extend the Haar integral for functions $f:G\longmapsto X$. We notice this version of the Haar integral also satisfies the property in (\ref{Haar4}).

The next result assures the existence of a left invariant measure on a locally compact topological group $G$.

\begin{theorem}[Haar]\label{Haar}
	Let $G$ be a locally compact group. Then, there exists at least one left invariant positive measure $\mu_0 \neq 0$. Moreover, the measure $\mu_0$ is unique except for a strictly positive factor of proportionality, i.e. if $\mu_1$ is a left invariant positive measure on $G,$ there exists $c>0$ such that $\mu_1=c\mu_0$. Finally
$$\mu_0(G) < \infty \Leftrightarrow \mbox{ $G$ is compact.}$$ 
\end{theorem}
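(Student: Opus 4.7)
The plan is to build a nonzero positive left-invariant linear functional $\Lambda$ on $C_c(G)$ and then invoke the Riesz--Markov theorem to produce the desired Radon measure $\mu_0$. The classical combinatorial tool is Haar's covering number: for a compact $K\subseteq G$ and a nonempty open $U\subseteq G$, let $(K:U)$ denote the least $n$ for which $K$ admits a cover by $n$ left translates of $U$; local compactness of $G$ guarantees finiteness. Fix once and for all a compact reference set $K_0$ with nonempty interior and, following Cartan's refinement, replace indicators by a nonnegative test function supported in a neighborhood $U$ of $e$, defining
\[
h_U(f) := \frac{(f:\varphi_U)}{(f_0:\varphi_U)},\qquad f\in C_c(G)^{+},
\]
where $(f:\varphi_U)$ is the infimum of $\sum c_i$ over representations $f\le \sum c_i\,\varphi_U(g_i^{-1}\cdot)$. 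These functionals are uniformly bounded on each fixed compact support, positively homogeneous, and left-invariant by construction.

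The family $\{h_U\}$ lies in a product of compact intervals indexed by $C_c(G)^{+}$; Tychonoff's theorem supplies a cluster point $\Lambda$ along the directed filter of neighborhoods of $e$. The key---and most delicate---step is showing that as $U$ shrinks, the almost-additivity inequality
\[
h_U(f+g)\le h_U(f)+h_U(g)\le h_U(f+g)+\varepsilon(U,f,g)
\]
becomes sharp, i.e.\ $\varepsilon(U,f,g)\to 0$; this rests on uniform continuity of the cutoffs on compact sets and is where the real work of the existence part lives. Once true additivity is secured, $\Lambda$ extends by linearity to a positive left-invariant functional on $C_c(G)$, nonzero because $h_U(f_0)=1$ for a suitable fixed $f_0$ supported near $K_0$. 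Riesz--Markov then delivers the nonzero left-invariant positive Radon measure $\mu_0$.

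For uniqueness, suppose $\mu_1$ is another left-invariant positive Radon measure and pick nonnegative $f,g\in C_c(G)$ with positive $\mu_0$-integrals. Applying Fubini to the double integral of $f(x)g(xy)$ over $G\times G$ and using left-invariance to exchange variables inside each iterated integral, one obtains that the ratio $\left(\int f\,d\mu_1\right)/\left(\int f\,d\mu_0\right)$ is independent of $f$; calling this common value $c>0$, the Riesz--Markov uniqueness clause forces $\mu_1=c\,\mu_0$ as required.

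Finally, for the finiteness dichotomy: if $G$ is compact then $G$ itself is a compact set in the domain of a Radon measure, hence $\mu_0(G)<\infty$. Conversely, if $G$ is not compact, fix a symmetric open neighborhood $W$ of $e$ with compact closure and use a transfinite/Zorn greedy argument to extract a sequence $(g_n)\subset G$ with the translates $g_n W$ pairwise disjoint; since every nonempty open set has positive $\mu_0$-measure by construction of $\Lambda$, left-invariance yields
\[
\mu_0(G)\;\ge\;\sum_{n}\mu_0(g_n W)\;=\;\sum_{n}\mu_0(W)\;=\;+\infty,
\]
closing the equivalence. The main obstacle in the whole scheme is the passage from almost-additivity to genuine additivity of $\Lambda$; the remaining steps (uniqueness, compactness criterion) are then straightforward consequences of the invariance property (\ref{Haar4}) together with standard regularity of Radon measures.
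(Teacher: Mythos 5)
The paper does not prove this theorem at all: its entire ``proof'' is the citation ``See [Chapter II, Sections 4 and 5] of Nachbin for a detailed proof.'' Your outline follows the same classical Weil--Cartan construction that the cited source carries out (covering numbers $(f:\varphi_U)$, normalization by a reference function, a Tychonoff cluster point, Riesz--Markov), so there is no divergence of \emph{route}; the issue is that your write-up leaves the two genuinely hard steps unproved. First, you yourself flag that the passage from the approximate additivity of $h_U$ to the exact additivity of the cluster point $\Lambda$ ``is where the real work of the existence part lives'' --- and then you do not do that work. This is not a routine limit interchange: one must show, using uniform continuity of $f$ and $g$ on compacts and a partition-of-unity--type decomposition subordinate to $f+g$, that $(f:\varphi_U)+(g:\varphi_U)\le (1+\varepsilon)\,(f+g:\varphi_U)+\varepsilon$ for $U$ small, and then that the cluster point inherits exact additivity. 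Asserting that the estimate ``becomes sharp'' is the conclusion, not an argument.

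Second, the uniqueness step as written does not close. Starting from $\int f\,d\mu_1\int g\,d\mu_0=\iint f(x)g(xy)\,d\mu_0(y)\,d\mu_1(x)$ and substituting $x\mapsto y^{-1}x$ via left-invariance of $\mu_1$, you arrive at $\int g(x)\bigl(\int f(y^{-1}x)\,d\mu_0(y)\bigr)\,d\mu_1(x)$, and the inner integral involves the integration variable \emph{inverted}; left-invariance says nothing about it, and in a non-unimodular group it is genuinely not $\int f\,d\mu_0$ times a constant. The standard repair is Weil's symmetrization: one divides by $x\mapsto\int g(tx)\,d\mu_0(t)$ (a continuous, strictly positive function, by positivity of Haar measure on open sets) and verifies the resulting kernel lies in $C_c(G\times G)$ before applying Fubini; only then does the ratio $\int f\,d\mu_1/\int f\,d\mu_0$ come out independent of $f$. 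Your compactness dichotomy is essentially fine (modulo choosing the $g_n$ to be $WW^{-1}$-separated so that the translates $g_nW$ are actually disjoint, which needs only ordinary induction, not Zorn). In short: correct skeleton, same as the reference the paper defers to, but the existence and uniqueness cores are gaps rather than proofs.
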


	See \cite[Chapter II, Sections 4 and 5]{Nachbin} for a detailed proof.	
\begin{corollary}[Normalized Haar measure]\label{Haar2}
	Let $G$ be a compact group. Then, there exists a left invariant positive measure $\mu$ on $G$ such that $\mu(G)=1$.
\end{corollary}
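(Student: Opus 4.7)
The plan is to obtain the normalized Haar measure by rescaling the measure produced by Theorem \ref{Haar}. First, I would apply Theorem \ref{Haar} to the compact group $G$, which yields a nonzero left invariant positive measure $\mu_0$ on $G$. Since $G$ is compact, the final equivalence in Theorem \ref{Haar} gives $\mu_0(G) < \infty$.

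Next, I would check that $\mu_0(G) > 0$. Since $\mu_0$ is a positive measure, monotonicity implies $\mu_0(A) \leq \mu_0(G)$ for every measurable $A \subseteq G$. Therefore, if we had $\mu_0(G) = 0$, then $\mu_0$ would vanish on every measurable subset of $G$, contradicting $\mu_0 \neq 0$. Hence $\mu_0(G) \in (0,\infty)$, and the scalar $c := 1/\mu_0(G)$ is well-defined and strictly positive.

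Finally, I would set $\mu := c\,\mu_0$. Being a strictly positive multiple of a positive measure, $\mu$ is itself a positive measure, and left invariance is preserved under multiplication by a positive constant; indeed, for every $g \in G$ and every $f \in \mathcal{L}(G,\mu)$ one has $\int_G f(g^{-1}y)\,d\mu = c\int_G f(g^{-1}y)\,d\mu_0 = c\int_G f(y)\,d\mu_0 = \int_G f(y)\,d\mu$. By construction $\mu(G) = c\,\mu_0(G) = 1$, which completes the proof.

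There is no real obstacle here: the only point requiring a line of justification is the strict positivity of $\mu_0(G)$, which is needed to ensure that the normalizing constant exists. Everything else is a direct consequence of Theorem \ref{Haar}.
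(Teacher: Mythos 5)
Your proposal is correct and follows exactly the same route as the paper, which simply sets $\mu:=\mu_0/\mu_0(G)$ with $\mu_0$ from Theorem \ref{Haar}. The only addition is your explicit check that $\mu_0(G)>0$, a detail the paper leaves implicit but which is a worthwhile clarification.
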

\begin{proof}
	Take $\mu:=\displaystyle{\frac{1}{\mu_0(G)}}\mu_0$, with $\mu_0$ given in the Theorem \ref{Haar}.
\end{proof}	

	\begin{remark}\rm{The integral associated to $\mu_0$ in the Theorem \ref{Haar} is the so called {Haar's integral}. From now on, we will denote by $\mu$ the measure with the property given in Corollary \ref{Haar2}.}
	\end{remark}

Let $\beta:X \rightarrow X$ be a continuous map and let $G$ be a compact topological group.  By the left invariance property of $\mu$, if $\eta:X\rightarrow X$ is the map given by
\begin{equation}\label{Haar3}
\eta(u):= \int_Gg\beta(g^{-1}u)d\mu,\,\, u\in X,
\end{equation}
then $\eta \in \Gamma_G(X)$. This fact will be useful in the sequel.

We finish this subsection by recalling an important result due to Kobayashi-Ôtani which generalizes the Principle of symmetric criticality due to Palais; see \cite[Theorem 1.28]{Willem}.

\begin{theorem}\label{PC}
	Let $X$ be a reflexive Banach space and let $G$ be a compact topological group that acts isometrically on $X$. If $I \in (H_0)$ with $\Phi$ and $\Psi$ being $G$-invariant, then
	\begin{equation}\label{PC1}
	0 \in \partial (I|_Z)(u) \implies 0 \in \partial I(u),
	\end{equation}
for any $u \in Z:=Fix(G)$.
\end{theorem}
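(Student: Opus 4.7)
The plan is to use the Haar averaging trick to reduce the general critical-point inequality on $X$ to the one known to hold on $Z$. Since $Z = Fix(G)$ is a closed linear subspace of $X$, the restriction $I|_Z = \Phi|_Z + \Psi|_Z$ again belongs to the class $(H_0)$ on $Z$, with Fr\'echet derivative $(\Phi|_Z)'(u)$ equal to the restriction of $\Phi'(u)$ to $Z$. Hence the hypothesis $0 \in \partial(I|_Z)(u)$ unfolds as
\[
\langle \Phi'(u), w-u\rangle + \Psi(w) - \Psi(u) \geq 0 \qquad \forall\, w \in Z,
\]
and the task is to extend this inequality to arbitrary $v \in X$.

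Given $v \in X$, I would introduce the averaged vector
\[
\bar v := \int_G gv \, d\mu,
\]
interpreted as a Bochner-type integral valued in $X$ (well-defined because $g\mapsto gv$ is continuous on the compact group $G$, hence bounded and separably valued, the action being isometric). The left invariance of $\mu$ immediately yields $h\bar v = \bar v$ for every $h \in G$, so $\bar v \in Z$. The $G$-invariance of $\Phi$, i.e. $\Phi(gx) = \Phi(x)$, differentiated and evaluated at the fixed point $u \in Z$ gives $\langle \Phi'(u), gh\rangle = \langle \Phi'(u), h\rangle$ for every $h \in X$ and $g \in G$. Commuting the integral with the bounded linear functional $\Phi'(u)$ then yields
\[
\langle \Phi'(u), \bar v - u\rangle = \int_G \langle \Phi'(u), gv - u\rangle \, d\mu = \langle \Phi'(u), v - u\rangle,
\]
so the linear part is preserved under the averaging. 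For the convex part, the $G$-invariance $\Psi(gv) = \Psi(v)$ combined with Jensen's inequality relative to the probability measure $\mu$ gives
\[
\Psi(\bar v) \leq \int_G \Psi(gv) \, d\mu = \Psi(v);
\]
this is trivial if $\Psi(v) = +\infty$, and otherwise follows by writing the convex lower semicontinuous $\Psi$ as the pointwise supremum of continuous affine functions on $X$ (a Hahn-Banach argument, naturally placed in the reflexive setting).

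Combining the two estimates, for every $v \in X$ one obtains
\[
\langle \Phi'(u), v - u\rangle + \Psi(v) - \Psi(u) \geq \langle \Phi'(u), \bar v - u\rangle + \Psi(\bar v) - \Psi(u) \geq 0,
\]
the last step being the hypothesis applied to the admissible test vector $\bar v \in Z$. By the definition of $\partial I$ given earlier, this is exactly $0 \in \partial I(u)$. The main delicate points are the rigorous interpretation of $\bar v$ as an element of $X$ obtained from the Haar integral and the Jensen inequality for the extended real-valued, convex, lower semicontinuous functional $\Psi$; these are the only places where the reflexivity of $X$ and the convexity plus lower semicontinuity of $\Psi$ are genuinely needed, and all the remaining manipulations are formal bookkeeping with the invariance property \eqref{Haar4} of the normalized Haar measure given by Corollary \ref{Haar2}.
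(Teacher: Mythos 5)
Your argument is correct. Note first that the paper does not prove Theorem \ref{PC} at all: it simply cites Kobayashi--\^Otani \cite[Theorem 3.16]{Kobayashi}, so there is no in-paper proof to compare against; what you have written is a self-contained proof along the standard lines that underlie the cited result in the isometric-action, convex-$\Psi$ setting. Each step checks out: $\bar v=\int_G gv\,d\mu$ exists as a Bochner integral because $g\mapsto gv$ is continuous on the compact group $G$ (hence Borel measurable with compact, therefore separable, range) and has constant norm $\|v\|$; left invariance of $\mu$ together with the linearity and continuity of each $\phi_g$ gives $\bar v\in Fix(G)$; the identity $\langle\Phi'(u),gh\rangle=\langle\Phi'(u),h\rangle$ follows from the chain rule applied to $\Phi\circ\phi_g=\Phi$ at the fixed point $u$; and the Jensen inequality $\Psi(\bar v)\le\int_G\Psi(gv)\,d\mu=\Psi(v)$ is legitimate for a proper convex l.s.c.\ functional via its representation as a supremum of continuous affine minorants (Fenchel--Moreau), each of which commutes with the Bochner integral. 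One small inaccuracy in your closing remark: reflexivity of $X$ is not actually used anywhere in your argument --- the Hahn--Banach/Fenchel--Moreau representation holds in every Banach space, and the Bochner integral already lands in $X$. Reflexivity would only become relevant if one defined $\bar v$ merely as a weak (Pettis-type) integral, in which case it is needed to place $\bar v$ in $X$ rather than in $X^{**}$; this is presumably why the hypothesis appears in the statement, but your Bochner-integral formulation bypasses it. This is a cosmetic point, not a gap.
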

An exhaustive proof of Theorem \ref{PC} is given in \cite[Theorem 3.16]{Kobayashi}. We emphasize here that (\ref{PC1}) ensures that every critical point of $I|_Z$ is a critical point of $I$ in the sense of Definition \ref{cp}.

\subsection{$G$-index theory}

Now, we introduce the notion of the $G$-index that will be required in our abstract results. The reader can consult \cite{Bartsch0} for a discussion in a more general situation.  
Let $\Sigma$ be the class of subsets of $(X-\{0\})$ that are $G$-invariant and closed in $X$. let us assume that the condition $(G_0)$ holds and let $Y$ be the vector space fixed in that condition.

\begin{definition}
	The $G$-index of $A \in \Sigma\setminus\{\emptyset\}$ is defined as 
$$
\gamma_G(A):=\min\{k\in \mathbb{N}\setminus\{0\}: \exists \phi: A \rightarrow Y^k\setminus\{0\},\,\phi \in \Gamma_G(A) \}
$$
if such integer exists and $\gamma_G(A):=\infty$ otherwise. Finally, we also set $\gamma_G(\emptyset):=0$.
\end{definition}

\begin{remark}\label{GT}
\rm{Note that when $G=\mathds{Z}_2$ the $G$-index introduced above coincides with the genus of symmetric subset of $(X-\{0\})$ (see \cite{Rabinowitz} for more details on genus theory).}
\end{remark}	

Denote by $\mathcal{C}$ the collection of all nonempty closed and bounded subsets of $X$. In $\mathcal{C}$ we put the Hausdorff metric $d_H$ given by
$$ d_H(A,B):=\max\left\{\sup_{x \in A} d(x,B),\,\sup_{y\in B} d(y,A)\right\},\,\,\, A,B \in \mathcal{C},$$
where $d$ denotes the usual distance on $X$. It is well known that $(\mathcal{C},d_H)$ is a complete metric space. Denote by $\mathcal{D}_G$ the subcollection of $\mathcal{C}$ of all nonempty compact $G$-invariant subset of $X$. By following the ideas in \cite[Section 4]{Szulkin} the reader is invited to note that $(\mathcal{D}_G,d_H)$ is a complete metric space (replace the condition of symmetry in that reference by the notion of $G$-invariance). By a similar way, we notice that, setting
$$ \Gamma_j:=\overline{\{A \in \mathcal{D}_G;\,0\notin A,\, \gamma_G(A)\geq j\}}^{d_H},$$
the space $(\Gamma_j,d_H)$, is also a complete metric space. The next properties can be proved by using an analogous reasoning as made in  \cite{Rabinowitz}.

\begin{proposition}\label{GP}
	For every $A,B \in \Sigma$ the following facts hold:
\begin{itemize} 
	\item[$i)$] If there exists $\phi:A \rightarrow B$, $\phi \in\Gamma_G(A)$, then $\gamma(A)\leq \gamma(B);$
	\item[$ii)$] $A \subset B$ implies that $\gamma(A)\leq\gamma(B);$
	\item[$iii)$] $\gamma(A\cup B)\leq \gamma(A)+\gamma(B);$
	\item[$iv)$] $\gamma(\overline{A\setminus B})\geq\gamma(A)-\gamma(B),$ since $\gamma(B)<\infty;$
	\item[$v)$] If $G$ is a finite group and $A$ is a compact set, then $\gamma(A)<\infty$.
	\item[$vi)$] If $A$ is a compact set, then we have $\gamma(N_\delta(A))=\gamma(A),$ $\delta \approx 0^+,$ where $$N_\delta(A):=\{x \in X:\, d(x,A)\leq \delta\}.$$
\end{itemize} 
\end{proposition}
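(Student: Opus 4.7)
The plan is to follow the classical Rabinowitz arguments for the Krasnoselskii genus (to which Remark~\ref{GT} reduces when $G=\mathbb{Z}_2$), replacing odd maps into $\mathbb{R}^k\setminus\{0\}$ by $G$-equivariant maps into $Y^k\setminus\{0\}$ and using the Haar-averaging formula (\ref{Haar3}) to promote continuous extensions to genuinely equivariant ones.

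For (i), if $\gamma(B)=\infty$ there is nothing to show; otherwise set $k=\gamma(B)$ and pick $\psi\in\Gamma_G(B)$ with values in $Y^k\setminus\{0\}$. The composition $\psi\circ\phi:A\to Y^k\setminus\{0\}$ is still equivariant, so $\gamma(A)\leq k$. Property (ii) is the special case of (i) in which $\phi$ is the inclusion. For (iii), assume $m=\gamma(A)$ and $n=\gamma(B)$ are both finite, and pick equivariant witnesses $\phi_A:A\to Y^m\setminus\{0\}$ and $\phi_B:B\to Y^n\setminus\{0\}$. By Tietze's theorem, extend each to continuous maps $\tilde\phi_A:X\to Y^m$ and $\tilde\phi_B:X\to Y^n$; then apply (\ref{Haar3}) to obtain
\[
\hat\phi_A(u):=\int_G g\,\tilde\phi_A(g^{-1}u)\,d\mu,\qquad \hat\phi_B(u):=\int_G g\,\tilde\phi_B(g^{-1}u)\,d\mu,
\]
where $G$ acts diagonally on $Y^m$ and $Y^n$. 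By left-invariance of $\mu$ these maps are equivariant on $X$, and since $A,B$ are $G$-invariant and $\phi_A,\phi_B$ are already equivariant, $\hat\phi_A\equiv\phi_A$ on $A$ and $\hat\phi_B\equiv\phi_B$ on $B$. The product $(\hat\phi_A,\hat\phi_B):A\cup B\to Y^{m+n}$ is equivariant and nowhere zero on $A\cup B$, so $\gamma(A\cup B)\leq m+n$.

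Property (iv) is a formal consequence of (iii) and (ii): $\overline{A\setminus B}$ lies in $\Sigma$ (continuity of the action makes closure commute with $G$-invariance, and $\overline{A\setminus B}\subset A\subset X\setminus\{0\}$), and $A\subset\overline{A\setminus B}\cup B$, whence $\gamma(A)\leq\gamma(\overline{A\setminus B})+\gamma(B)$. For (v), for each $u\in A$ produce a continuous equivariant map $\sigma_u:X\to Y$ with $\sigma_u(u)\neq 0$ (Tietze extension of a nonzero value on the orbit $Gu$, then Haar averaging); by continuity $\sigma_u$ is nonzero on a $G$-invariant open neighborhood of $u$ (saturate it using the $G$-action, which is continuous with $G$ compact). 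Compactness of $A$ selects finitely many such neighborhoods $V_{u_1},\dots,V_{u_k}$ covering $A$, and $(\sigma_{u_1},\dots,\sigma_{u_k}):A\to Y^k\setminus\{0\}$ is equivariant, yielding $\gamma(A)\leq k<\infty$. Finally for (vi), monotonicity (ii) gives $\gamma(A)\leq\gamma(N_\delta(A))$ (the closed neighborhood $N_\delta(A)$ is $G$-invariant because $G$ acts isometrically). Conversely, if $\phi:A\to Y^k\setminus\{0\}$ realizes $k=\gamma(A)$, its equivariant extension $\hat\phi:X\to Y^k$ satisfies $\inf_A\|\hat\phi\|>0$ by compactness of $A$, so by continuity $\hat\phi$ remains nonzero on $N_\delta(A)$ for $\delta>0$ small enough, giving $\gamma(N_\delta(A))\leq k$.

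The main technical obstruction running through (iii), (v), and (vi) is the equivariant extension step: one must verify that the naive Tietze extension, once averaged against the left-invariant Haar measure via (\ref{Haar3}), produces a continuous map on $X$ that is genuinely equivariant and still agrees with the original map on the $G$-invariant set on which it was defined. This hinges on compactness of $G$ (so that $\mu(G)=1$ by Corollary~\ref{Haar2}), on the isometric linear action of $G$ on $X$ and on each $Y^k$ factor, and on a dominated-convergence-style argument for the vector-valued Haar integral recalled in Section~\ref{S2}; everything else in the proof is purely formal and parallels the symmetric case developed in \cite{Rabinowitz}.
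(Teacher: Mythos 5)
The paper itself gives no proof of Proposition \ref{GP}: it only remarks that the properties ``can be proved by using an analogous reasoning as made in \cite{Rabinowitz}.'' Your write-up fills in exactly that intended argument, and parts $i)$--$iv)$ and $vi)$ are correct: the composition/restriction arguments, the Tietze-plus-Haar-averaging construction of equivariant extensions (which do agree with the original map on a $G$-invariant set because $g\tilde\phi(g^{-1}u)=\phi(u)$ there and $\mu(G)=1$), the subadditivity via the pair map into $Y^{m+n}$, and the compactness argument for the neighborhood property are all sound, modulo the routine remark that $N_\delta(A)\in\Sigma$ for $\delta<d(0,A)$ and that the case $\gamma_G(A)=\infty$ in $vi)$ is handled by monotonicity alone.

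There is, however, a genuine gap in your proof of $v)$, at the step ``Tietze extension of a nonzero value on the orbit $Gu$, then Haar averaging.'' For this to produce $\sigma_u(u)\neq 0$ you must first define a map on the orbit $Gu$ that is already equivariant and nonzero at $u$; the natural assignment $gu\mapsto gy_0$ is well defined only if the stabilizer $G_u$ fixes $y_0$ in $Y$. If instead you extend an arbitrary nonzero value and then average, the integral $\int_G g\,\tilde\tau(g^{-1}u)\,d\mu$ can vanish: in the extreme case $u\in Fix(G)\setminus\{0\}$ it equals $\int_G gy_0\,d\mu\in Fix(G,Y)$, and admissibility of the action on $Y$ forces $Fix(G,Y)=\{0\}$ (a nonzero constant equivariant map would violate the definition of admissibility), so the averaged map is zero at $u$. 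This is not a repairable defect of the construction but a defect of the statement: if $A$ contains a nonzero point whose stabilizer $G_u$ has $Fix(G_u,Y)=\{0\}$, then every equivariant map $A\to Y^k$ vanishes at that point and $\gamma_G(A)=\infty$, so $v)$ is false as stated for general finite $G$. Your argument (and the paper's claim) is correct precisely when each $u\in A$ admits a nonzero $G_u$-fixed vector in $Y$ — in particular whenever the action on $X\setminus\{0\}$ is free, as in the case $G=\mathbb{Z}_2=\{Id,-Id\}$ actually used in the applications, where $v)$ reduces to the classical finiteness of the genus of a compact symmetric set. You should either add this hypothesis or restrict $v)$ to that setting.
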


Finally, by following the idea in \cite[Proposition 4.2]{Szulkin}, we can prove the property below.

\begin{proposition}\label{GP1}
	If $A\in \Gamma_j$ is such that $0 \notin A$, then $\gamma(A)\geq j$.
\end{proposition}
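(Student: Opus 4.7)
The plan is to combine the Hausdorff-closure definition of $\Gamma_j$ with the ``continuity from the outside'' property of the index recorded in Proposition \ref{GP}$(vi)$. Given $A \in \Gamma_j$ with $0 \notin A$, the definition of $\Gamma_j$ supplies a sequence $(A_n)$ in $\{B \in \mathcal{D}_G:\, 0 \notin B,\; \gamma(B) \geq j\}$ such that $d_H(A_n, A) \to 0$, and my task is to transport the lower bound $\gamma(A_n) \geq j$ to the limit set $A$.

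First, since $A$ is compact and $0 \notin A$, one has $d(0,A) > 0$, so I can fix $\delta > 0$ small enough that $0 \notin N_\delta(A)$ and, by Proposition \ref{GP}$(vi)$, $\gamma(N_\delta(A)) = \gamma(A)$. A short verification shows $N_\delta(A) \in \Sigma$: it is closed by construction, and it is $G$-invariant because the action is isometric and $A$ is $G$-invariant, so for every $g \in G$ and $x \in N_\delta(A)$,
$$d(gx, A) = d(gx, gA) = d(x, A) \leq \delta.$$
The Hausdorff convergence $A_n \to A$ then yields $A_n \subset N_\delta(A)$ for all $n$ sufficiently large, and monotonicity (Proposition \ref{GP}$(ii)$) gives
$$j \leq \gamma(A_n) \leq \gamma(N_\delta(A)) = \gamma(A),$$
which is exactly the conclusion.

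The main obstacle, although already packaged in Proposition \ref{GP}$(vi)$, is establishing that the $G$-index is continuous from the outside in the equivariant setting. The natural route is to realize $\gamma(A) = k$ by an equivariant map $\phi : A \to Y^k \setminus \{0\}$, extend $\phi$ to $N_\delta(A)$ through a Lipschitz retraction of a thin tubular neighborhood onto $A$, and then restore equivariance by averaging against the normalized Haar measure $\mu$ exactly as in the construction recalled in formula (\ref{Haar3}). Once that ingredient is available, the argument above is essentially a two-line deduction, so the bulk of the work has really been done in Proposition \ref{GP}.
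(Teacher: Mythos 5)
Your argument is correct and is precisely the one the paper intends: the paper gives no written proof but defers to Szulkin's Proposition 4.2, whose argument is exactly your combination of the Hausdorff-closure definition of $\Gamma_j$, the stability $\gamma(N_\delta(A))=\gamma(A)$ from Proposition \ref{GP}$(vi)$, and monotonicity, with the only adaptation being the equivariance check for $N_\delta(A)$ that you carry out. Nothing is missing.
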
	

\section{Deformation Lemmas and minimax theorems for lower semicontinuous functionals}

In order to prove the main variant (Theorem \ref{Fountain1} below) of the classical Fountain Theorem,
at the beginning of this section we recall 
a suitable version, valid for the class of functionals $I \in (H_0)$, of 
the deformation lemma proved by
Szulkin in \cite[Proposition 2.3]{Szulkin}. In addition, we show an equivariant version of this result.
Finally, in the last subsection  we prove two more abstract results: the result proved by Heinz in \cite[Proposition 2.2]{Heinz} 
has been extended to the class of functionals $I\in (H_0)$ as well as  a version of a result due to Szulkin in \cite[Corollary 4.8]{Szulkin}. In our case, the critical levels $c_k$ obtained in the result satisfy $c_k\rightarrow \infty$. The main results of this section complement the study made by Szulkin in \cite{Szulkin}, since new minimax results are established to the class $(H_0)$.

For $I:=\Phi+\Psi \in (H_0)$, let us denote by $I^{c}$, $K$ and $K_c$ the following sets
$$I^{c}:=I^{-1}((-\infty, c])\, \mbox{ for every }\, c \in \mathbb{R},$$
and
$$K:=\{u \in X: \, u \,\, \text{is a critical point of}\,\, I\},$$
as well as
$$K_c:= \{u \in K:\, I(u)=c\}.$$
\subsection{Deformation lemmas and Fountain Theorem}
We recall that by a \textit{deformation} we mean a family of mappings of type
\begin{equation*}
\alpha_s:=\alpha(s,\cdot):W\subset X \rightarrow X, \,\,s \in [0, s_0]
\end{equation*}
such that $\alpha_0 \equiv Id|_W$, with $\alpha \in C([0, s_0]\times W,X)$ and $Id|_W$ is the identity map restricted to $W$.

Next, we recall the deformation lemma proved by Szulkin in \cite[Proposition 2.3]{Szulkin}.

\begin{lemma}\label{DL}
	Suppose that $I=\Phi + \Psi \in (H_0)$ satisfies the $(\rm PS)$ condition and let $N$ be a neighbourhood of $K_c$. Then, fixed $\varepsilon_0 > 0$, there is $\varepsilon \in (0,\varepsilon_0)$ such that, for each compact set $A \subset X\setminus N$ with
	\begin{equation*}
	c \leq \displaystyle{\sup_{u \in A}} I(u) \leq c+\varepsilon,
	\end{equation*}
there exist a closed set $W,$ with $A\subset \text{int}({W}),$ and a deformation $\alpha_s: W \rightarrow X,$ with $0 \leq s \leq s_0 \approx 0^+,$ such that
\begin{itemize}
	\item[$i)$] $ \|\alpha_s(u) - u \| \leq s,\,\,\, \forall u \in W;$
	\item[$ii)$] There is a number $\delta = \delta_\varepsilon \approx0^+$ such that
	\begin{equation*}
	I(\alpha_s(u)) - I(u) \leq s + \delta s\,\,\,\,\, \forall u \in W,
	\end{equation*}
	and
	\begin{equation*}
	I(\alpha_s(u)) - I(u) \leq -3\varepsilon s + \delta s\,\,\,\,\, \forall u \in W,\,\, I(u) \geq c - \varepsilon.
	\end{equation*}
\end{itemize}
	Moreover, by $ii)$ it follows that
\begin{itemize}
	\item[$iii)$] $I(\alpha_s(u))-I(u) \leq 2s,\,\,\, \forall u \in W;$
	\item[$iv)$] $I(\alpha_s(u)) - I(u) \leq -2\varepsilon s, \,\,\, \forall u \in W,\,\, I(u)\geq c-\varepsilon;$
	\item[$v)$] $\displaystyle{\sup_{u \in A}}I(\alpha_s(u)) - \displaystyle{\sup_{u \in A}} I(u)\leq -2\varepsilon s.$
	\item[$vi)$] $I(\alpha_s(u))-I(u)\leq 0,\,\,\, \forall u \in W\cap C,$ for each closed set verifying $C\cap K=\emptyset$.
\end{itemize}
\end{lemma}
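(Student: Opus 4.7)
The plan is to follow the standard scheme for deformation lemmas in the $(H_0)$ setting: first establish a quantitative slope estimate away from $K_c$, then construct local descent vectors at each point of the compact set $A$, and finally glue them by a partition of unity, leveraging the convexity of $\Psi$ so that descent survives averaging. I would begin by invoking $(\rm PS)$ and the compactness of $K_c$ to shrink $N$ to a smaller open neighborhood $N_0$ of $K_c$ and to produce $\varepsilon \in (0,\varepsilon_0)$ and $\delta_0 > 0$ such that for every $u \notin N_0$ with $|I(u) - c| \le 2\varepsilon$ there exists $v \in D(\Psi)$ with $\|v - u\| \le 1$ and
$$\langle \Phi'(u), v - u\rangle + \Psi(v) - \Psi(u) \le -4\varepsilon.$$
Otherwise, Ekeland's variational principle (Theorem \ref{PVE}) applied to $I$ would supply a $(\rm PS)$ sequence lying outside $N_0$ whose cluster point would belong to $K_c \subset N_0$, contradicting $(\rm PS)$.

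Next, for each $u \in A$ I would set $z_u := v - u$ with $v$ as above, and use the continuity of $\Phi'$ together with convexity of $\Psi$ to obtain an open neighborhood $W_u$ of $u$ on which
$$\langle \Phi'(w), z_u\rangle + \Psi(w + z_u) - \Psi(w) \le -3\varepsilon.$$
Covering the compact $A$ by finitely many such $W_{u_1},\dots,W_{u_m}$, choosing a closed $W$ with $A \subset \mathrm{int}(W) \subset W \subset \bigcup_i W_{u_i}$, and fixing a subordinated partition of unity $\{\lambda_i\}$, I would define
$$\alpha_s(w) := w + s \sum_{i=1}^m \lambda_i(w)\, z_{u_i}, \qquad 0 \le s \le s_0.$$
The identity $w + s\sum_i \lambda_i(w) z_{u_i} = (1-s)w + s\sum_i \lambda_i(w)(w + z_{u_i})$ combined with convexity of $\Psi$ delivers
$$\Psi(\alpha_s(w)) - \Psi(w) \;\le\; s\sum_i \lambda_i(w)\bigl(\Psi(w + z_{u_i}) - \Psi(w)\bigr),$$
while the $C^1$-smoothness of $\Phi$ produces $\Phi(\alpha_s(w)) - \Phi(w) = s\sum_i \lambda_i(w)\langle \Phi'(w), z_{u_i}\rangle + o(s)$ uniformly on $W$. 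Summing and inserting the local inequality yields exactly the two bounds in item (ii), with the strict $-3\varepsilon s$ descent on $\{w \in W : I(w) \ge c-\varepsilon\}$; item (i) is immediate since $\|\sum_i \lambda_i(w) z_{u_i}\| \le 1$.

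Items (iii)--(vi) would follow mechanically from (ii) by choosing the remainder $\delta < \varepsilon$ so the $\delta s$ term can be absorbed into the $\varepsilon s$ margin: (v) is (iv) applied pointwise on $A$, where $I \ge c$ by hypothesis, and (vi) is obtained by refining the covering so that on $W \cap C$ the local inequality relaxes to non-increase (using $C \cap K = \emptyset$, which excludes critical points where no descent direction exists). The main obstacle I anticipate is the transfer from the strict pointwise inequality at $u$ to the neighborhood inequality on $W_u$: since $\Psi$ is merely convex and lower semicontinuous, the map $w \mapsto \Psi(w + z_u) - \Psi(w)$ is in general not upper semicontinuous. This is circumvented by exploiting the strict margin $-4\varepsilon$ at $u$ and the fact that after averaging via $\{\lambda_i\}$ the bound on $\Psi(\alpha_s(w)) - \Psi(w)$ need only hold on average, a reduction that the convexity of $\Psi$ tolerates and which is precisely the mechanism of Szulkin's original argument in \cite[Proposition 2.3]{Szulkin}.
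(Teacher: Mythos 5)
The paper does not actually reprove this lemma: it is quoted (with the explicit $\delta$ of item $ii)$ extracted from the proof) from Szulkin's Proposition 2.3, so the benchmark is Szulkin's original argument. Its overall architecture --- descent vectors produced from the $(\rm PS)$ condition, a finite cover of the compact set $A$, a partition of unity, and convexity of $\Psi$ to survive the averaging --- is exactly what you outline, so your strategy is the right one.

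The gap sits precisely at the step you flag and then wave away. Your local vector field is the constant translation $z_u = v-u$, and your convexity estimate
$$\Psi(\alpha_s(w)) - \Psi(w) \;\le\; s\sum_i\lambda_i(w)\bigl(\Psi(w+z_{u_i}) - \Psi(w)\bigr)$$
passes through the individual values $\Psi(w+z_{u_i})$. Since $\Psi$ is only convex and lower semicontinuous, $\Psi(w+z_{u_i})$ can be $+\infty$ for $w \in D(\Psi)$ arbitrarily close to $u_i$ even though $\Psi(u_i+z_{u_i})=\Psi(v_i)<\infty$ (take $\Psi$ the indicator of a closed convex set $C$: the translate $C - z_{u_i}$ need not contain a relative neighborhood of $u_i$ in $C$). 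Neither the extra margin ($-4\varepsilon$ versus $-3\varepsilon$) nor the averaging repairs this, because every convexity bound available for your $\alpha_s(w)=w+s\sum_i\lambda_i(w)z_{u_i}$ still involves $\Psi$ evaluated at translates of the variable point $w$. Szulkin's actual mechanism is different: one aims at the \emph{fixed} targets, i.e. uses the field $w\mapsto v_i-w$, so that $\alpha_s(w)=(1-s)w+s\sum_i\lambda_i(w)v_i$ and convexity gives $\Psi(\alpha_s(w))\le(1-s)\Psi(w)+s\sum_i\lambda_i(w)\Psi(v_i)$, which involves only the fixed finite numbers $\Psi(v_i)$ and the value $\Psi(w)$; lower semicontinuity then works in your favor, since it yields $-\Psi(w)\le-\Psi(u_i)+\eta$ on a small enough neighborhood, while $\langle\Phi'(w),v_i-w\rangle$ is handled by continuity of $\Phi'$. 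With this replacement (plus the trivial choice $v_i=u_i$ at points of $A$ where $I<c-\varepsilon$, which produces the first, merely non-increasing estimate in $ii)$), your outline becomes the actual proof. A secondary weakness: item $vi)$ does not ``follow mechanically from $ii)$'', nor can the cover be refined for a given $C$, since the statement quantifies over all closed sets $C$ disjoint from $K$ for one and the same $\alpha_s$; this item requires its own bookkeeping in the construction.
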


	 We would like to point out that $ii)$ is not contained in the statement of \cite[Proposition 2.3]{Szulkin}. However, the sufficiently small $\delta>0$ in $ii)$ appears in the proof of \cite[Proposition 2.3]{Szulkin}.
\smallskip

Now, we are able to prove an equivariant version of Lemma \ref{DL} making use of the next concept involving the action of a compact topological group $G$ on $X$ and a functional of type $\Psi: X \rightarrow (-\infty,\infty]$.

\begin{definition}
	Let $\Psi:X\rightarrow (-\infty, \infty]$ be a functional and $G$ be a compact topological group that acts on $X$. We say that $\Psi$ is compatible with the action of $G$ on $X$ ($G$-compatible for short) if
	\begin{equation}\label{Comp}
	\Psi\left(\int_G \beta(g u) d\mu\right) \leq \int_G\Psi(\beta(g u))d\mu,
	\end{equation}
	for every fixed $u\in X$, $\beta \in C(Gu,X),$ where $Gu:=\{gu;\,g\in G\}$ and $\mu$ denotes the normalized Haar measure.
\end{definition}

The inequality in (\ref{Comp}) is verified in some meaningful cases and some of them are briefly discussed in the next example.

\begin{example} \rm{By using the usual notations, let us restrict our attention to the following cases:\par
\begin{itemize}
	\item[$1)$] Let $\Psi\equiv\|\cdot\|:X \rightarrow \mathbb{R}$ be the norm defined on $X$. Fixed a map $\beta \in C(G,X),$ let $(\beta_n)$ be a sequence of simple functions with
	\begin{equation}\label{Conv}
	\int_G \beta_n(g)d\mu \rightarrow \int_G \beta(g) d\mu \quad \mbox{and}\quad \int_G \|\beta_n(g)\|d\mu \rightarrow \int_G \|\beta(g)\|d\mu.
	\end{equation}
	Each function $\beta_n$ can be written as a finite sum:
	\begin{equation*}
	\beta_n = \sum_i \chi_{A_i}v_i\quad \mbox{where}\quad A_i:= \beta_n^{-1}(\{v_i\})\quad\mbox{and}\quad v_i \in X.
	\end{equation*}
	Since $\mu$ is the normalized Haar measure ($\mu (G) = 1$), we have $\displaystyle{\sum_i} \mu(A_i) = 1$ and 
	\begin{equation*}
	\left\Arrowvert\int_G \beta_n(g)d\mu\right\Arrowvert = \left\Arrowvert \sum_i \mu(A_i) v_i\right\Arrowvert \leq \sum_i \mu(A_i)\|v_i\|=\int_G\|\beta_n(g)\|d\mu,
	\end{equation*}
for every $n \in \mathbb{N}$. 
	Consequently, by using (\ref{Conv}) it follows that
	\begin{equation*}
	\left\Arrowvert\int_G \beta(g)d\mu\right\Arrowvert \leq \int_G\|\beta(g)\|d\mu.
	\end{equation*}
	So $\|\cdot\|$ is compatible with the action of $G$ on $X$. In general, the result is still true for an arbitrary convex continuous function $\Psi:X\rightarrow \mathbb{R}$.
	\item[$2)$] Let us assume that $G:=\{g_1,..., g_k\}$ is a finite group. Since 
$$\displaystyle{\sum_{i=1}^{k}} \mu(\{g_i\}) = 1,$$ 
{for each} $\beta \in C(G,X)$ the integral $\displaystyle{\int_G} \beta(g)d\mu$ can be written as a finite convex combination of vectors of  $X$. More precisely, one has
	\begin{equation*}
	\int_G \beta(g)d\mu = \sum_{i=1}^{k} \mu(\{g_i\})v_i,
	\end{equation*}
where $v_i := \beta(g_i)$.\par
	\noindent Then, for any convex l.s.c. functional $\Psi:X \rightarrow (-\infty,\infty]$ one has
	\begin{equation*}
	\Psi\left(\int_G \beta(g)d\mu\right) = \Psi\left(\sum_{i=1}^{k} \mu(\{g_i\})v_i\right)\leq \sum_{i=1}^{k} \mu(\{g_i\})\Psi(v_i)=\int_G\Psi(\beta(g))d\mu,
	\end{equation*}
	i.e. $\Psi$ is compatible with the action of $G$ on $X$.
\end{itemize}
}
\end{example}

The next result (Equivariant Deformation Lemma) generalizes the Corollary 2.4 proved by Szulkin in \cite{Szulkin} and also complements the Lemma 5.1 found in Bereanu-Jebelean \cite{BJ}. 

\begin{lemma}\label{EDL}
	Suppose that $I:=\Phi+\Psi \in (H_0)$ satisfies the $(\rm PS)$ condition, where $\Phi$ and $\Psi$ are $G$-invariant functionals and $\Psi$ is compatible with the action of the compact topological group $G$ on $X$. Assume that $G$ acts isometrically on $X$. Under the hypothesis of Lemma \ref{DL}, the same conclusions hold with $\alpha_s: W \rightarrow X$  equivariant in $A$, whenever $A$ is a $G$-invariant set.
\end{lemma}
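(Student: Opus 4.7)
The plan is to apply Lemma \ref{DL} to obtain a (non-equivariant) deformation $\alpha_s\colon W\to X$ and then symmetrize it by the Haar integral to produce an equivariant replacement $\tilde\alpha_s$. Since $A$ is compact and $G$-invariant and $A\subset\mathrm{int}(W)$, the positive number $\rho:=\mathrm{dist}(A,X\setminus W)$ allows me to shrink the domain to the closed neighborhood $\tilde W:=\overline{N_{\rho/2}(A)}$, which is $G$-invariant because the action is isometric and still contains $A$ in its interior, with $\tilde W\subset W$. I then define
\[
\tilde\alpha_s(u):=\int_G g\,\alpha_s(g^{-1}u)\,d\mu(g),\qquad (s,u)\in[0,s_0]\times\tilde W,
\]
using the normalized Haar measure $\mu$ from Corollary \ref{Haar2}. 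The integrand is well defined since $g^{-1}u\in\tilde W\subset W$, and continuity of $\tilde\alpha$ is routine. The equivariance $\tilde\alpha_s(hu)=h\tilde\alpha_s(u)$ follows from the substitution $g\mapsto hg$ combined with left-invariance of $\mu$, exactly as in \eqref{Haar3}; moreover $\tilde\alpha_0(u)=\int_G u\,d\mu=u$.

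To verify property i) I use the isometric hypothesis $\|gv\|=\|v\|$ together with the Jensen-type inequality for the norm discussed in the example above:
\[
\|\tilde\alpha_s(u)-u\|=\left\|\int_G g\bigl(\alpha_s(g^{-1}u)-g^{-1}u\bigr)\,d\mu\right\|\leq\int_G\|\alpha_s(g^{-1}u)-g^{-1}u\|\,d\mu\leq s.
\]
For property ii) the key estimate is
\[
I(\tilde\alpha_s(u))-I(u)\leq\int_G\bigl[I(\alpha_s(g^{-1}u))-I(g^{-1}u)\bigr]\,d\mu+o(s),
\]
with $o(s)/s\to 0$ uniformly on the compact set $\tilde W$. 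The $\Psi$-contribution is handled by the compatibility hypothesis applied to the integrand $g\mapsto g\alpha_s(g^{-1}u)$ together with the $G$-invariance of $\Psi$, giving $\Psi(\tilde\alpha_s(u))\leq\int_G\Psi(\alpha_s(g^{-1}u))\,d\mu$ and $\Psi(u)=\int_G\Psi(g^{-1}u)\,d\mu$. For the $\Phi$-contribution, a first-order Taylor expansion $\Phi(v)=\Phi(u)+\langle\Phi'(u),v-u\rangle+r(u,v)$ with $|r(u,v)|\leq\|v-u\|\,\omega(\|v-u\|)$ is applied both at $v=\tilde\alpha_s(u)$ and at $v=g\alpha_s(g^{-1}u)$ (both within distance $s$ of $u$); the linear terms cancel on integration, because $\tilde\alpha_s(u)-u=\int_G[g\alpha_s(g^{-1}u)-u]\,d\mu$, leaving only a remainder $o(s)$. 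Lemma \ref{DL}(ii) applied pointwise at $g^{-1}u\in W$, combined with the $G$-invariance $I(g^{-1}u)=I(u)$, then yields $I(\tilde\alpha_s(u))-I(u)\leq s+\delta s+o(s)$ in general and $\leq -3\varepsilon s+\delta s+o(s)$ when $I(u)\geq c-\varepsilon$; shrinking $s_0$ absorbs the $o(s)$ error into an adjusted $\delta$.

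Properties iii)--vi) then follow from ii) by the same elementary manipulations as in Lemma \ref{DL}. The main obstacle in this argument is the $\Phi$-contribution: unlike $\Psi$, the functional $\Phi$ is only $C^1$ and is not assumed convex, so a direct Jensen-type inequality is unavailable. The workaround is to exploit $C^1$ regularity via a first-order Taylor expansion, which reduces the discrepancy between $\Phi(\tilde\alpha_s(u))$ and its Haar average $\int_G\Phi(g\alpha_s(g^{-1}u))\,d\mu$ to a uniform $o(s)$ remainder. This is possible precisely because the Haar integral commutes with the linear functional $\Phi'(u)$, while being applied to displacements of size $O(s)$.
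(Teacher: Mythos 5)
Your proposal is correct and follows essentially the same route as the paper's proof: symmetrize the deformation from Lemma \ref{DL} by the Haar integral, obtain $i)$ from the isometry of the action together with the Jensen-type inequality for the norm, and obtain $ii)$ by combining the compatibility of $\Psi$ with a first-order Taylor expansion of $\Phi$ whose linear term commutes with the Haar integral, absorbing the $o(s)$ remainders into the constant $\delta$. The only difference is your explicit replacement of $W$ by the $G$-invariant neighborhood $\overline{N_{\rho/2}(A)}$, a point the paper leaves implicit but which is a worthwhile clarification rather than a divergence in method.
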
	
\begin{proof}
	Denote by $\beta_s$ the deformation of Lemma \ref{DL} and set
	\begin{equation}
	\alpha_s(u):=\int_Gg^{-1}\beta_s(gu)d\mu.
	\end{equation}
	Thanks to (\ref{Haar3}), we observe that $\alpha_s \in \Gamma_G(A)$ and let us prove that $\alpha_s$ verifies all the assumptions of Lemma \ref{DL}. More precisely, since $iii),$ $iv)$ and $v)$ are a direct consequence of $ii)$, and so, it is enough to show only $i)$ and $ii)$. By Lemma \ref{DL} - Part $i)$, one has
	\begin{equation}\label{i}
	\begin{aligned}
	\left\Arrowvert \alpha_s(u)-u\right\Arrowvert&=\left\Arrowvert \int_Gg^{-1}\beta_s(gu)d\mu - \int_G(g^{-1}g)ud\mu\right\Arrowvert\\
	&\leq \int_G\left \Arrowvert g^{-1}(\beta_s(gu)-gu)\right\Arrowvert d\mu\\
	& \leq \int_G s d\mu\, =s\quad \mbox{for every}\,\, u \in W,
	\end{aligned}
	\end{equation}
	i.e. $\alpha_s$ verifies $i)$ as claimed.\par 
\noindent In order to prove $ii)$  let us write $\beta_s(u)=u+h_s(u),$ so that $\alpha_s(u)= u + w_s(u)$, where $w_s(u)=\displaystyle{\int_G} g^{-1}h_s(gu)d\mu$. Consequently, the Taylor's formula immediately yields
	\begin{equation}\label{Taylor}
	I(\alpha_s(u))= \{\Phi(u)+ \langle \Phi'(u), w_s(u)\rangle + r(s)\} + \Psi(\alpha_s(u)),\quad \frac{r(s)}{s}=o_s(1).
	\end{equation}
	Now, the compatibility condition of $\Psi$ gives 
	\begin{equation}
	\begin{aligned}
	I(\alpha_s(u))\leq\int_G(\Phi(u)+ \langle \Phi'(u),g^{-1}h_s(gu)\rangle)d\mu+\int_G\Psi(g^{-1}\beta_s(gu))d\mu + \frac{\delta}{2}s,		
	\end{aligned}
	\end{equation}
for $s \approx 0^+$. Moreover, since 
$$\langle\Phi'(u), g^{-1}h_s(gu)\rangle = \langle\Phi'(gu),h_s(gu)\rangle,$$ the $G$-invariance of $\Phi$ and the Taylor's expansion applied to $I(\beta_s(gu))$ give
	\begin{equation}\label{Taylor2}
	\begin{aligned}
	I(\alpha_s(u))&\leq\int_G(\Phi(gu)+ \langle \Phi'(gu),h_s(gu)\rangle)d\mu+\int_G\Psi(\beta_s(gu))d\mu + \frac{\delta}{2}s\\
	&=\int_G (I(\beta_s(gu))-\rho(s))d\mu + \frac{\delta}{2}s\leq \int_G I(\beta_s(gu))d\mu +\delta s.	
	\end{aligned}
	\end{equation}
	(Here, we have used $\rho$ as being the rest in the Taylor's expansion). Finally, by Lemma \ref{DL} - Part $ii)$ and (\ref{Taylor2}), it follows that
	\begin{equation}\label{e1}
	I(\alpha_s(u))\leq \int_G I(gu)d\mu + s+2\delta s \leq I(u)+s+2\delta s,
	\end{equation}
for every $u\in W$.
	Similarly 
	\begin{equation}\label{e2}
	I(\alpha_s(u))\leq I(u) - 3\varepsilon s + 2\delta s,\quad\mbox{for every}\quad u \in W\,\,\mbox{ and }\,\,\,I(u)\geq c-\varepsilon.
	\end{equation}
	 Inequalities (\ref{e1}) and (\ref{e2}) ensure that $\alpha_s$ satisfies $ii)$ provided that $\delta$ is sufficiently small.
\end{proof}	

For the sake of completeness, let us recall now the notion of \textit{homotopy}. Let $B$ be a subset of $X$ and $f$, $g \in C(B,X)$. We say that $f$ is homotopic to $g$ if there is $h \in C([0,1]\times B,X)$ satisfying
\begin{equation}\label{Homotopy}
h(0,\cdot)\equiv f \,\,\,\, \text{and} \,\,\,\, h(1,\cdot)\equiv g.
\end{equation}
The map $h$ is called a homotopy between $f$ and $g$. We denote $f \approx g$ to designate that $f$ is homotopic to $g$ by an equivariant homotopy, i.e., there exists $h \in C([0,1]\times B,X)$ satisfying (\ref{Homotopy}) with $h(t,\cdot) \in \Gamma_G(B)$ for any $t \in [0,1]$. It easily seen that $\approx$ is an equivalence relation in $C(B,X)$.

In what follows, for each $k \in \mathbb{N}$, we set
\begin{itemize}
\item[$i)$] $Y_k := \bigoplus_{j=1}^k X_j$ and $Z_k:= \overline{\bigoplus_{j=k}^\infty X_j};$
\item[$ii)$] $B_k:=\{u \in Y_k;\,\, \|u\|\leq \rho_k\}$ and $N_k:=\{u \in Z_k;\,\, \|u\|=r_k\},$ with $\rho_k>r_k>0.$
\end{itemize}

Finally, let us recall the Intersection Lemma proved in \cite[Lemma 3.4]{Willem}; see also \cite[Theorem 2]{Bartsch1} for additional comments and remarks.

\begin{lemma}\label{Intersection}
	Assume that $(G_0)$ holds. If $\gamma \in C(B_k,X)\cap\Gamma_G(B_k)$ and $\gamma|{\partial B_k}\equiv Id|_{\partial B_k}$, then $\gamma(B_k)\cap N_k \neq \emptyset$.
\end{lemma}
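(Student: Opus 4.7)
My plan is to argue by contradiction, using the admissibility of the $G$-action on $Y$. Assume $\gamma(B_k)\cap N_k=\emptyset$. Using the $(G_0)$-isomorphisms $X_j\cong Y$, identify $Y_k\cong Y^k$ and $Y_{k-1}\cong Y^{k-1}$, and take $U:=\{u\in Y_k:\|u\|<\rho_k\}$, which is a bounded $G$-invariant open neighbourhood of $0\in Y^k$ with $\partial U=\partial B_k$. Admissibility then forbids the existence of any continuous equivariant map $\Phi:\partial B_k\to Y_{k-1}\setminus\{0\}$; producing such a $\Phi$ will give the contradiction.

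The first step is to reformulate the hypothesis via the canonical continuous linear $G$-equivariant projections $P:X\to Y_{k-1}$ and $Q:=I-P:X\to Z_k$ afforded by the decomposition $X=Y_{k-1}\oplus Z_k$ (both are equivariant because each $X_j$ is $G$-invariant and $G$ acts isometrically). Since $N_k$ is precisely the common zero locus of $v\mapsto Pv$ and $v\mapsto \|Qv\|-r_k$, the hypothesis reads
\[
\bigl(P\gamma(u),\ \|Q\gamma(u)\|-r_k\bigr)\neq(0,0), \qquad \forall\,u\in B_k.
\]
The naive candidate $\Phi_0:=P\circ\gamma$ is equivariant and continuous, but on $\partial B_k$ we have $\gamma=\mathrm{Id}$, and $\Phi_0(u)=P(u)$ vanishes on the nonempty set $X_k\cap\partial B_k\subset\ker P$. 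A correction is therefore mandatory.

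My idea is to absorb the scalar information $\|Q\gamma\|-r_k$ into a $Y_{k-1}$-valued correction term by means of a $G$-equivariant linear isomorphism $\iota:X_k\to X_1\subset Y_{k-1}$ (available from $(G_0)$ since $X_k\cong Y\cong X_1$ as $G$-representations) together with a $G$-invariant scalar cutoff $\chi:\mathbb{R}\to\mathbb{R}$, setting
\[
\Phi(u)\;:=\;P\gamma(u)\;+\;\chi\bigl(\|Q\gamma(u)\|-r_k\bigr)\,\iota\bigl(\pi_k\gamma(u)\bigr),\qquad u\in\partial B_k.
\]
Continuity and $G$-equivariance are immediate from those of $P$, $\gamma$, $\pi_k$, $\iota$ and the $G$-invariance of $\|Q(\cdot)\|$ and $\chi$.

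The main obstacle is the choice of $\chi$ so that $\Phi$ is nowhere zero on $\partial B_k$: this is the technical heart of the argument. Writing $\gamma(u)=u=\sum_{j\le k}\pi_j(u)$ on $\partial B_k$, a zero of $\Phi$ projected onto the $X_j$-components forces $\pi_j(u)=0$ for $2\le j\le k-1$ together with the $X_1$-relation $\pi_1(u)=-\chi\bigl(\|\pi_k(u)\|-r_k\bigr)\iota(\pi_k(u))$, all under the constraint $\|u\|=\rho_k>r_k$. Exploiting this strict gap, $\chi$ can be designed so that no such configuration is admissible, using that at a genuine zero of $\Phi$ the two pieces of the hypothesis would have to fail simultaneously at $\gamma(u)=u$. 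Once $\Phi$ is shown to be nowhere zero, the admissibility applied to $U\subset Y^k$ and $\Phi:\partial U\to Y^{k-1}$ supplies the desired contradiction.
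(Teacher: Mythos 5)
The paper itself offers no proof of this lemma; it simply recalls it from Willem's book (Lemma 3.4 there), so your attempt has to stand on its own, and unfortunately it has a fatal structural flaw. Your entire strategy is to build a continuous equivariant map $\Phi:\partial B_k\to Y_{k-1}\setminus\{0\}$ and then invoke admissibility for a contradiction. But admissibility, applied to the bounded $G$-invariant open neighbourhood $U=\{u\in Y_k:\|u\|<\rho_k\}$ of $0$, says precisely that \emph{every} continuous equivariant map $\partial U=\partial B_k\to Y^{k-1}$ has a zero; the object you are trying to construct cannot exist, whatever cutoff $\chi$ you choose. Moreover, your construction cannot exploit the contradiction hypothesis at all: on $\partial B_k$ one has $\gamma=Id$, hence $\|\gamma(u)\|=\rho_k>r_k$ and $\gamma(u)\notin N_k$ automatically, so the assumption $\gamma(B_k)\cap N_k=\emptyset$ carries no information on $\partial B_k$ --- the only set your map sees. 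The unspecified ``design of $\chi$'' is exactly where the argument collapses: already on the invariant sphere of radius $\rho_k$ in $X_1\oplus X_k$ your map reduces to $u_1+\chi(\|u_k\|-r_k)\,\iota(u_k)$, which admissibility (for $k=2$) forces to vanish somewhere.

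The information that must be used is the behaviour of $\gamma$ in the \emph{interior} of $B_k$, and the standard argument is direct rather than by contradiction. Admissibility forces $Fix(G)\cap Y=\{0\}$ (otherwise a nonzero constant map into a fixed vector would be equivariant and zero-free), hence $Fix(G)=\{0\}$ in $X$, and equivariance gives $\gamma(0)\in Fix(G)=\{0\}$. Therefore $\Omega:=\{u\in U:\|\gamma(u)\|<r_k\}$ is a bounded $G$-invariant open neighbourhood of $0$ in $Y_k\cong Y^k$; since $\rho_k>r_k$ and $\gamma=Id$ on $\partial B_k$, one gets $\overline{\Omega}\subset U$, so $\partial\Omega\subset U$ and $\|\gamma(u)\|=r_k$ for every $u\in\partial\Omega$. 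Applying admissibility to the single equivariant map $P\circ\gamma:\partial\Omega\to Y_{k-1}\cong Y^{k-1}$ (with $P$ the projection onto $Y_{k-1}$ along $Z_k$) yields $u\in\partial\Omega$ with $P\gamma(u)=0$, i.e. $\gamma(u)\in Z_k$ and $\|\gamma(u)\|=r_k$, so $\gamma(u)\in N_k$. No correction term and no cutoff are needed once admissibility is applied on $\partial\Omega$ instead of $\partial B_k$.
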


Now, we are ready to show a version of the classical Fountain Theorem due to Bartsch \cite{Bartsch0} that is valid for the functionals $I\in (H_0)$.

\begin{theorem}\label{Fountain1}
	Suppose $I:=\Phi+\Psi \in (H_0)$ satisfies the $(\rm PS)$ condition, where $\Phi$ and $\Psi$ are $G$-invariants functionals with $\Psi$ compatible with the action of $G$ on $X$. Moreover, assume that $(G_0)$ holds and that
	\begin{itemize}
		\item [$i)$] $a_k=\displaystyle{\sup_{u \in Y_k, \|u\|= \rho_k}}I(u)\leq 0;$
		\item [$ii)$] $	b_k:= \displaystyle{\inf_{u \in Z_k, \|u\|=r_k}}I(u) \rightarrow \infty.$
	\end{itemize}
for every $k\geq2$.\par
	Then, supposing that the levels $c_k:=\displaystyle{\inf_{\gamma \in \Theta_k}}\,\displaystyle{\sup_{u\in B_k}} I(\gamma(u)) <\infty,$ where
\begin{equation}\label{Collection}
\,\,\,	\Theta_k:=\{\gamma \in C(B_k,X):\,\, \gamma \in \Gamma_G(B_k) \;\; \mbox{and} \;\; \gamma|_{\partial B_k}\equiv Id|_{\partial B_k}\},
\end{equation}
	the functional $I$ has infinitely many critical points $\{u_k\}$ such that $I(u_k)=c_k\rightarrow \infty$.
\end{theorem}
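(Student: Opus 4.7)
The plan is to follow the classical two-step strategy of Bartsch's original Fountain Theorem, suitably adapted via the Equivariant Deformation Lemma \ref{EDL}. First I would establish the lower bound $c_k\geq b_k$, so that hypothesis $(ii)$ forces $c_k\to\infty$; then I would argue by contradiction that each $c_k$ is a critical value of $I$.

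For the lower bound, fix any $\gamma\in\Theta_k$. Since $\gamma\in\Gamma_G(B_k)$ and $\gamma|_{\partial B_k}\equiv Id|_{\partial B_k}$, the Intersection Lemma (Lemma \ref{Intersection}) yields $u^*\in B_k$ with $\gamma(u^*)\in N_k$, hence $\sup_{u\in B_k}I(\gamma(u))\geq I(\gamma(u^*))\geq b_k$. Taking the infimum over $\Theta_k$ gives $c_k\geq b_k$, and hypothesis $(ii)$ then yields $c_k\to\infty$.

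For the second step, suppose for contradiction that $K_{c_k}=\emptyset$ for some $k\geq 2$. Fix $\varepsilon_0\in(0,c_k/4)$, small enough that $a_k\leq 0<c_k-2\varepsilon_0$. Applying Lemma \ref{EDL} with $c=c_k$ and $N=\emptyset$ (an admissible neighbourhood of the empty $K_{c_k}$) produces $\varepsilon\in(0,\varepsilon_0)$ for which the deformation conclusions hold on any admissible $A$. By definition of $c_k$, pick $\gamma\in\Theta_k$ with $\sup_{B_k}I\circ\gamma\leq c_k+\varepsilon$, and set $A:=\gamma(B_k)$: this set is compact, $G$-invariant (since $B_k$ is $G$-invariant and $\gamma$ is equivariant), and satisfies $c_k\leq\sup_A I\leq c_k+\varepsilon$. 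Hence Lemma \ref{EDL} supplies an equivariant deformation $\alpha_s\colon W\to X$ on a neighbourhood $W$ of $A$. I would then define the competitor $\tilde\gamma:=\alpha_{s_0}\circ\gamma\colon B_k\to X$ (well-defined because $A\subset W$), whose continuity follows from those of $\gamma$ and $\alpha_{s_0}$, and whose equivariance follows from equivariance of $\gamma$ on $B_k$ combined with equivariance of $\alpha_{s_0}$ on the $G$-invariant set $A$. Property $v)$ of Lemma \ref{DL} then gives $\sup_{B_k}I\circ\tilde\gamma\leq \sup_A I-2\varepsilon s_0<c_k$, contradicting the definition of $c_k$.

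The only delicate point — and the main obstacle — is verifying that $\tilde\gamma\in\Theta_k$, i.e. that $\alpha_{s_0}$ leaves $\partial B_k$ pointwise fixed. For $u\in\partial B_k$ one has $\gamma(u)=u$ and $I(u)\leq a_k\leq 0<c_k-2\varepsilon$, so such $u$ lies outside the critical strip $[c_k-\varepsilon,c_k+\varepsilon]$ on which the deformation is truly active: the pseudo-gradient construction underlying Lemma \ref{DL} (cf.\ \cite{Szulkin}) can be localised so that $\alpha_s=Id$ on $\{u\in W:I(u)\leq c_k-2\varepsilon\}$, which contains $\partial B_k$ by the choice of $\varepsilon_0$. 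A secondary technicality concerns the quantitative decrease in property $v)$: should $2\varepsilon s_0\leq \varepsilon$, one iterates the deformation finitely many times — each iterate decreasing $I$ by $2\varepsilon s_0$ at points where $I\geq c_k-\varepsilon$ while leaving $\partial B_k$ fixed — until the cumulative decrease strictly exceeds $\varepsilon$, producing the desired contradiction with $c_k=\inf_{\Theta_k}\sup I\circ\gamma$.
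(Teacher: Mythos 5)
Your first step (the lower bound $c_k\ge b_k$ via the Intersection Lemma, hence $c_k\to\infty$) matches the paper. The contradiction argument, however, has two genuine gaps, and they are precisely the two difficulties that the paper's proof is built to circumvent.

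First, the claim that the deformation can be ``localised so that $\alpha_s=Id$ on $\{u\in W: I(u)\le c_k-2\varepsilon\}$'' is not available here. In Szulkin's setting the deformation of Lemma \ref{DL} is not produced by a pseudo-gradient flow that can be cut off on sublevel sets; the lemma only guarantees $\|\alpha_s(u)-u\|\le s$ and $I(\alpha_s(u))\le I(u)+2s$ (resp.\ $\le I(u)$ on closed sets avoiding $K$), never that $\alpha_s$ is the identity anywhere. Consequently $\tilde\gamma=\alpha_{s_0}\circ\gamma$ need not restrict to the identity on $\partial B_k$, so $\tilde\gamma\notin\Theta_k$ and no contradiction with the definition of $c_k$ is obtained. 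The paper resolves this by replacing $\Theta_k$ with the relaxed class $\tilde\Theta_k$, in which the boundary condition is only ``equivariantly homotopic to $Id|_{\partial B_k}$ inside the sublevel set $I^{c_k-\varepsilon/4}$,'' and then proving $\tilde c_k=c_k$ by a gluing (polar-coordinate) construction that converts any competitor in $\tilde\Theta_k$ back into one in $\Theta_k$ without raising the max above $\max\{m_0,c_k-\varepsilon/4\}$. This relaxation is exactly what makes $\alpha_s\circ\gamma$ an admissible competitor.

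Second, the quantitative bookkeeping does not close. You choose $\gamma$ with $\sup_{B_k}I\circ\gamma\le c_k+\varepsilon$ and the deformation decreases the sup by $2\varepsilon s_0$ with $s_0\approx 0^+$; since $2\varepsilon s_0\ll\varepsilon$, this does not push the sup below $c_k$. Your proposed fix by iteration is not justified: each iterate changes the compact set $A$, hence the neighbourhood $W$ and the admissible time $s_0$, and Lemma \ref{DL} provides no uniform lower bound on these times, so the cumulative decrease need not exceed $\varepsilon$. The paper avoids this entirely by running Ekeland's Variational Principle (Theorem \ref{PVE}) on the complete metric space $(\tilde\Theta_k,d)$ applied to $\varphi(\gamma)=\sup_{B_k}I(\gamma(u))$: the Ekeland inequality $\varphi(\eta)-\varphi(\gamma)\ge-\varepsilon\,d(\eta,\gamma)\ge-\varepsilon s$ is then contradicted by the decrease $\varphi(\alpha_s\circ\gamma)-\varphi(\gamma)\le-2\varepsilon s$ for arbitrarily small $s$, so no large decrease is ever needed. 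To repair your argument you would need both ingredients: the relaxed class with the homotopy boundary condition (together with its completeness and the identity $\tilde c_k=c_k$), and the Ekeland step in place of the direct ``push the sup below $c_k$'' strategy.
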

\begin{proof}
	Arguing by contradiction, if the theorem is not true, we may assume that for some $k\geq 2$ we have $K_{c_k}=\emptyset$. If $k$ is large enough, from Lemma \ref{Intersection}, we have $c_k \geq b_k>0$, and so, we can apply the Lemma \ref{DL} with $N=\emptyset$ and $\varepsilon_0=c_k$. Fixing $\varepsilon \in (0,c_k)$ given in Lemma  \ref{DL}, we will get  a contradiction. Indeed, let us define
	\begin{equation}\label{Collec}
	\tilde{\Theta}_k:=\left\{\gamma \in C(B_k,X):\,\, \gamma \in \Gamma_G(B_k),\,\,\gamma|_{\partial B_k}\approx Id|_{\partial B_k}\,\, \text{in}\,\,I^{c_k-\frac{\varepsilon}{4}}\,\, \mbox{and} \,\,  (I\circ\gamma)|_{\partial B_k}\leq \left(c_k-\frac{\varepsilon}{2}\right)\right\}.
	\end{equation}
	Thanks to conditions $i)-ii)$ above, if $\gamma \in \Theta_k$ and $u\in \partial B_k$, we derive
	$$I(\gamma(u))=I(u)\leq 0 <c_k-\frac{\varepsilon}{2}<c_k-\frac{\varepsilon}{4}.$$ Hence $\Theta_k \subset \tilde{\Theta}_k$ and
	\begin{equation}\label{Inf}
	\tilde{c}_k:=\displaystyle{\inf_{\gamma \in \tilde{\Theta}_k}}\,\displaystyle{\sup_{u\in B_k}} I(\gamma(u)) \leq c_k.
	\end{equation}
	
	\noindent If  $\tilde{c}_k < c_k$, it easily seen that there exists $\gamma_0 \in \tilde{\Theta}_k$ such that $$m_0:=\displaystyle{\sup_{u\in B_k}}I(\gamma_0(u))<c_k.$$ Moreover, by (\ref{Collec}), there exists a homotopy $H\in C\left([0,1]\times \partial B_k,I^{c_k-\frac{\varepsilon}{4}}\right)$ such that
	\begin{equation}\label{H1}
	H(0,\cdot)\equiv \gamma_0|_{\partial B_k}\,\,\, \text{and}\,\,\, H(1,\cdot)\equiv Id|_{\partial B_k},
	\end{equation}
	with $H(t,\cdot)$ equivariant for every $t\in[0,1]$. Since $B_k$ is a ball of radius $\rho_k$ each point $u \in B_k$ can be represented as
	$u \equiv (s, \tilde{u})$, $s \in [0, \rho_k]$, $\tilde{u} \in \partial B_k$; polar coordinates of $u$. Hence, if $u \in \partial B_k$ then $u\equiv(\rho_k,u)$. Now, define $\gamma_1:B_K \to X$ by 
	\begin{equation}
	\gamma_1(s,v) := \left\{\begin{aligned}&\gamma_0(s,v)\,\,\,\,\,\, s\in \left[0,\,\frac{\rho_k}{2}\right]\\
	&H\left(\frac{2}{\rho_k}s-1,v\right)\,\,\,\,\,\, s\in\left[\frac{\rho_k}{2},\,\rho_k\right].
	\end{aligned}\right.
	\end{equation}

According to (\ref{H1}), when $s={\rho_k}/{2}$ it holds $H({2s}/{\rho_k}-1,\cdot)=H(0,\cdot)\equiv\gamma_0$, which assures that $\gamma_1$ is well defined and $\gamma_1 \in C(B_k,X)\cap \Gamma_G(B_k)$, since $\gamma_0$ and $H(t,\cdot)$ are equivariants. By using again (\ref{H1}), if $u\in\partial B_k$ one has
	\begin{equation*}
	\gamma_1(u)=H(1,u)=Id|_{\partial B_k}(u),
	\end{equation*}
so that $\gamma_1 \in \Theta_k$. But $$\displaystyle{\sup_{u\in B_k}} I(\gamma_1(u)) \leq \max\left\{m_0, c_k-\frac{\varepsilon}{4}\right\}<c_k,$$ against the definition of $c_k$. This contradiction assures that $\tilde{c}_k=c_k$ in (\ref{Inf}). So we can work with $\tilde{\Theta}_k$ instead $\Theta_k$.
	
Now, we observe that the collection $\tilde{\Theta}_k$ is a (complete) metric subspace of the complete metric space $C(B_k,X)$ endowed by $d(f,g):=\displaystyle{\sup_{u\in B_k}}\|f(u)-g(u)\|$. Indeed, suppose $\gamma_n \rightarrow \gamma$ in $C(B_k,X)$ with $\gamma_n\in\tilde{\Theta}_k$. The semicontinuity of $I$ yields $$I(\gamma(u))\leq\liminf{I(\gamma_n(u))}\leq c_k-\frac{\varepsilon}{2},\,\,\,u \in \partial B_k.$$ Moreover, the action properties give
	\begin{equation*}
	\gamma(gu)=\lim\gamma_n(gu)=g\lim\gamma_n(u)\,\,\,\, \forall u \in B_k,\,\,\forall g \in G,
	\end{equation*}
	so that $\gamma \in \Gamma_G(B_k)$. Moreover, thanks to the continuity of $\Phi$, it is possible to find a sequence of positive numbers $\tau_n=o_n(1)$ such that
	\begin{equation}\label{Phi}
	\Phi(t\gamma_n(u)+(1-t)\gamma(u))\leq t\Phi(\gamma_n(u))+(1-t)\Phi(\gamma(u))+\tau_n\,\,\,\, \forall u \in\partial B_k,\,\, \forall t \in [0,1].
	\end{equation}
	More precisely $\tau_n:=2\max\{\tau^1_n,\tau^2_n\}$ with $$\tau^1_n:=\displaystyle{\sup_{u\in B_k,\,t \in [0,1]}}|\Phi(t\gamma_n(u)+(1-t)\gamma(u))-\Phi(\gamma(u))|$$ and $$\tau^2_n:=\displaystyle{\sup_{u\in B_k}} |\Phi(\gamma_n(u))-\Phi(\gamma(u))|.$$ The sentence in (\ref{Phi}) associated to the convexity of $\Psi$ implies
	\begin{equation}\begin{aligned}
	I(t\gamma_n(u)+(1-t)\gamma(u))&\leq tI(\gamma_n(u))+(1-t)I(\gamma(u))+\tau_n\\
	&\leq c_k-\frac{\varepsilon}{2}+\tau_n \leq c_k-\frac{\varepsilon}{4}\,\,\,\,\,\, \forall u \in \partial B_k,\,\,\forall t \in [0,1],
	\end{aligned}	
	\end{equation}
for $n$ sufficiently large.\par
	
	Thus ${\gamma_n}|_{\partial B_k} \approx \gamma|_{\partial B_k}$ via the equivariant homotopy $F(t,\cdot):=t\gamma_n(\cdot)+(1-t)\gamma(\cdot)$. Consequently  $\gamma|_{\partial B_k}\approx Id|_{\partial B_k}$, so that $\tilde{\Theta}_k$ is a complete metric subspace of $C(B_k,X)$ as claimed.
	The conclusion follows as in  \cite[Theorem 3.2]{Szulkin}. Using \cite[Lemma 3.1]{Szulkin}  and the definition of $c_k$, as $I$ is l.s.c., we have that the functional $\varphi:\tilde{\Theta}_k \to (-\infty,+\infty]$ given by 
	$$\varphi(\gamma):=\displaystyle{\sup_{u\in B_k}I(\gamma(u))}$$ is also a l.s.c. functional and bounded from below. Since $\tilde{\Theta}_k$ is a complete metric space, we can apply the classical Ekeland's Variational Principle recalled in Theorem \ref{PVE}, to the functional $\varphi$ with $\delta=\varepsilon$ and $\tau=1$. Then, we may take $\gamma \in \tilde{\Theta}_k$ such that $\varphi(\gamma) \leq c_k+\varepsilon$,  and
	\begin{equation}\label{PV1}
	\varphi(\eta)-\varphi(\gamma)\geq-\varepsilon d(\eta,\gamma)\,\,\,\,\forall \eta \in \tilde{\Theta}_k.
	\end{equation}
	It follows that $A:=\gamma(B_k)$ is a compact equivariant set with $$\displaystyle{\sup_{v\in A}I(v)}=\displaystyle{\sup_{u\in B_k}I(\gamma(u))}\leq c_k+\varepsilon,$$ so that $A$ verifies all the assumptions of the equivariant deformation lemma given in Lemma \ref{EDL}. Hence, let $\eta:=\alpha_s\circ\gamma,$ where $\alpha_s$ is the equivariant deformation given in Lemma \ref{EDL} and let us prove that $\eta \in \tilde{\Theta}_k$ for $s\approx 0^+$. Indeed $\eta \in \Gamma_G(B_k)$ and if $u\in \partial B_k$, by $iii)-iv)$ in Lemma \ref{DL} it follows that
	\begin{equation}\label{PV2}
	\left\{\begin{aligned}&I(\eta(u))=I(\alpha_s(\gamma(u)))\leq I(\gamma(u))\leq c_k-\frac{\varepsilon}{2},\quad I(\gamma(u))\in\left(c_k-\varepsilon, c_k-\frac{\varepsilon}{2}\right]
	\\
	&I(\eta(u))\leq I(\gamma(u))+2s\leq c_k-\frac{\varepsilon}{2},\quad I(u)\leq c_k-\varepsilon,
	\end{aligned} \right.
	\end{equation}
	so that 
	$$(I\circ\eta)|_{\partial B_k}\leq c_k-\frac{\varepsilon}{2}.$$ 
	Now, since $\alpha_s\circ\gamma$ can be viewed as an equivariant homotopy such that $(\alpha_s\circ\gamma)|_{\partial B_k}\approx\gamma|_{\partial B_k}$ in $I^{c_k-\frac{\varepsilon}{2}}$, it follows that
$$\eta|_{\partial B_k}\approx(\alpha_s\circ\gamma)|_{\partial B_k}\approx Id|_{\partial B_k}\quad \mbox{in}\quad I^{c_k-\frac{\varepsilon}{4}},$$ taking into account that $\gamma|_{\partial B_k}\approx Id|_{\partial B_k}$.\par
	\noindent Finally, since $\eta\in\tilde{\Theta}_k$, one has
the contradiction 
	\begin{equation}
	\begin{aligned}
	-\varepsilon s &\leq \varphi(\eta)-\varphi(\gamma)\\
	& =\displaystyle{\sup_{u\in B_k}}I(\alpha_s(\gamma(u)))-\displaystyle{\sup_{u\in B_k}}I(\gamma(u))\leq-2\varepsilon s,
	\end{aligned}
	\end{equation}
by using $i)$ and $v)$ of Lemma \ref{DL} and (\ref{PV1}). This contradiction guarantees that, for some $k_0$, $K_{c_k}\neq \emptyset$ for $k\geq k_0$ and thus the proof is complete since $c_k\geq b_k$.
\end{proof}	
	
\subsection{Minimax results involving the G-index Theory}
Let us recall the notation
$$N_\delta (A):=\{x\in X: d(x,A)\leq \delta\}.$$
The next technical lemma will be useful in the sequel.
\begin{lemma}\label{TL}
	Let $I:=\Phi+\Psi \in (H_0)$ be a functional which satisfies the compactness $(\rm PS)$ condition. Moreover, let $(c_j)$ be a real sequence such that $c_j \rightarrow c \in \mathbb{R}$. Then, given $\delta>0,$ there exists $j_0 \in \mathbb{N}$ such that
	$$ K_{c_j}\subset N_\delta(K_c),$$
for every $j\geq j_0.$
\end{lemma}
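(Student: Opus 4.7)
The plan is to argue by contradiction. Suppose that the conclusion fails; then there exist $\delta_0>0$, a subsequence (still denoted $(c_j)$), and points $u_j \in K_{c_j}$ such that $d(u_j,K_c)>\delta_0$ for every $j$. Since each $u_j$ is an actual critical point of $I$, one has
\[
\langle \Phi'(u_j),v-u_j\rangle + \Psi(v)-\Psi(u_j)\ge 0 \quad \text{for every } v\in X,
\]
and $I(u_j)=c_j\to c$. In particular, $(u_j)$ is a $(\mathrm{PS})$ sequence at level $c$ (take $\varepsilon_j\equiv 0$ in Definition \ref{cp} $ii)$). The assumed $(\mathrm{PS})$ condition then furnishes a subsequence, not relabeled, converging to some $u\in X$.

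The heart of the argument is to verify that $u\in K_c$. First, since $I$ is l.s.c., $I(u)\le \liminf I(u_j)=c$. For the reverse inequality, plug $v=u$ into the critical point inequality to obtain
\[
\Psi(u_j)\le \Psi(u) + \langle \Phi'(u_j), u-u_j\rangle.
\]
Because $\Phi \in C^1(X,\mathbb R)$ and $u_j\to u$, the right-hand side converges to $\Psi(u)$; combined with the lower semicontinuity of $\Psi$ this yields $\Psi(u_j)\to \Psi(u)$, hence $I(u_j)\to I(u)$ and $I(u)=c$. To pass to the limit in the subdifferential inequality, fix any $v\in X$ and let $j\to\infty$ in
\[
\langle \Phi'(u_j),v-u_j\rangle + \Psi(v)-\Psi(u_j)\ge 0;
\]
the first term converges to $\langle \Phi'(u),v-u\rangle$ by the continuity of $\Phi'$, and $\Psi(u_j)\to\Psi(u)$, so $\langle \Phi'(u),v-u\rangle + \Psi(v)-\Psi(u)\ge 0$ for every $v\in X$. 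Thus $0\in\partial I(u)$, so $u\in K_c$.

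Finally, $u_j\to u\in K_c$ contradicts $d(u_j,K_c)>\delta_0>0$ (and it also handles the edge case $K_c=\emptyset$, since then $u\in K_c$ is itself impossible). Hence the initial assumption is false and the lemma follows. The only delicate step is establishing $u\in K_c$: specifically, promoting $\liminf \Psi(u_j)\ge \Psi(u)$ (from l.s.c.) to actual convergence $\Psi(u_j)\to\Psi(u)$, which is forced by testing the critical point inequality at the candidate limit $v=u$. Everything else is routine once this convergence is in hand.
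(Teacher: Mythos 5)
Your proof is correct and follows essentially the same route as the paper: contradiction, recognizing $(u_j)$ as a $(\mathrm{PS})_c$ sequence, extracting a convergent subsequence, and upgrading $\liminf \Psi(u_j)\ge\Psi(u)$ to $\Psi(u_j)\to\Psi(u)$ by testing the critical point inequality at $v=u$, whence $u\in K_c$. The only difference is that you spell out the passage to the limit in the subdifferential inequality, which the paper leaves implicit.
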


\begin{proof}
Arguing by contradiction, assume that there exist a subsequence $(c_{j_k})$ of $(c_j)$, a number $\delta_0>0$, and a sequence $(u_k)$ with $u_k \in K_{c_{j_k}}$ such that
	\begin{equation}\label{DI}
	d(u_k,K_c)>\delta_0,\,\,\, \forall k \in \mathbb{N}.
	\end{equation}
	The definition of $K_{c_{j_k}}$ immediately yields
	\begin{equation}\label{CPk}
	\langle \Phi'(u_k),v-u_k\rangle+\Psi(v)-\Psi(u_k) \geq 0\,\,\, \forall v \in X,
	\end{equation}
	as well as
	$$
	I(u_k)=c_{j_k}\rightarrow c,
	$$
	so that $(u_k)$ is a $(\rm PS)_c$ sequence for the functional $I$. Now, the $(\rm PS)$ condition ensures the existence of $u_0 \in X$ and a subsequence of $(u_k)$, still denoted again by $(u_k)$, such that
	$$
	u_k \rightarrow u_0 \quad \mbox{in} \quad X.
	$$
	Now, taking $v=u_0$ in (\ref{CPk}), we get $\limsup \Psi(u_k) \leq \Psi(u_0)$. This together with the semicontinuity property of $\Psi$ gives $\lim \Psi(u_k)=\Psi(u_0)$, and so, $u_0 \in K_c.$ Consequently $d(u_k,K_c) \to 0$ as $k \to \infty$, against (\ref{DI}).
\end{proof}	

The next result extends a result due to Heinz \cite[Proposition 2.2]{Heinz} for functionals $I\in (H_0)$.

\begin{theorem}\label{HT} Suppose that $I:=\Phi+\Psi\in(H_0)$ satisfies the $(\rm PS)$ condition, $\Phi$ and $\Psi$ are $G$-invariant functionals, $I(0)=0$, $(G_0)$ holds and that the following property in the $G$-index also holds:
$$
\gamma(A)<\infty \quad \mbox{for all compact set} \,\, A \in \Sigma.  \leqno{(G_*)}
$$
Moreover, fix
	$$
	 c_j:=\inf_{A \in \Gamma_j}\sup_{u \in A}I(u),
	 $$
and assume the following conditions:
\begin{itemize}
	\item[$i)$] $-\infty<c_j$ for every $j\in\mathbb{N};$
	\item[$ii)$] Given $j \in \mathbb{N},$ there exists $A \in \Sigma$ such that
$$\gamma_G(A)\geq j\quad \mbox{and}\quad \displaystyle{\sup_{u \in A}}I(u)<0,$$ where $A\neq\emptyset$ is a compact set.
\end{itemize}
Then, the numbers $c_j$ are negative critical values of $I$ with $\lim c_j =0.$	
\end{theorem}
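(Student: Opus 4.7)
The plan is to run four stages: establish the range and monotonicity of $(c_j)$; show each $c_j$ is a critical value via Ekeland's principle on $(\Gamma_j,d_H)$; rule out $\lim c_j<0$ via a deformation-plus-index argument; and flag the main obstacle. Hypothesis $i)$ gives $c_j>-\infty$, while hypothesis $ii)$ delivers, for each $j\in\mathbb{N}$, a nonempty compact $A\in\Sigma$ with $\gamma_G(A)\ge j$ and $\sup_A I<0$. Since $I(0)=0$ forces $0\notin A$, this $A$ belongs to $\Gamma_j$ and $c_j\le\sup_A I<0$. Monotonicity $c_j\le c_{j+1}$ follows from $\Gamma_{j+1}\subset\Gamma_j$, so $c^*:=\lim c_j$ exists in $(-\infty,0]$.

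To show each $c_j$ is a critical value, I would argue by contradiction, assuming $K_{c_j}=\emptyset$. On the complete metric space $(\Gamma_j,d_H)$ consider $\varphi(A):=\sup_{u\in A}I(u)$; the lower semicontinuity of $I$ together with the fact that every point of a $d_H$-limit arises as a limit of points from approximating sets makes $\varphi$ l.s.c., and $\varphi\ge c_j$. Fix $\varepsilon_0\in(0,|c_j|)$ and apply Lemma \ref{EDL} at level $c_j$ with $N=\emptyset$ (compatibility of the convex $\Psi$ with the $G$-action reduces to Jensen's inequality against the normalized Haar measure) to get $\varepsilon\in(0,\varepsilon_0)$ and $s_0>0$. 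Theorem \ref{PVE} with $\delta=\varepsilon$, $\tau=1$ furnishes $A_0\in\Gamma_j$ with $c_j\le\varphi(A_0)\le c_j+\varepsilon$ and $\varphi(B)-\varphi(A_0)\ge-\varepsilon\,d_H(B,A_0)$ for all $B\in\Gamma_j$. Since $\varphi(A_0)<0=I(0)$, the compact $A_0$ stays at positive distance from $0$; shrinking $s_0$ below $d(0,A_0)$ and setting $B:=\alpha_{s_0}(A_0)$, equivariance of $\alpha_{s_0}$ together with Proposition \ref{GP}$i)$ makes $B$ compact, $G$-invariant, disjoint from $\{0\}$, and of index $\gamma_G(B)\ge\gamma_G(A_0)\ge j$; hence $B\in\Gamma_j$. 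Parts $i)$ and $v)$ of Lemma \ref{DL} then give $d_H(B,A_0)\le s_0$ and $\varphi(B)\le\varphi(A_0)-2\varepsilon s_0$; substituting in Ekeland's inequality forces the contradiction $-2\varepsilon s_0\ge-\varepsilon s_0$.

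For the convergence $c_j\to 0$, I assume $c^*<0$. Picking $u_j\in K_{c_j}$ and combining $(\rm PS)$ at level $c^*$ with the semicontinuity argument of Lemma \ref{TL}, the set $K_{c^*}$ is nonempty, compact and $G$-invariant (the invariance inherited from $\Phi$ and $\Psi$), and $I(0)=0\neq c^*$ forces $0\notin K_{c^*}$; hence $p:=\gamma_G(K_{c^*})<\infty$ by $(G_*)$. By Proposition \ref{GP}$vi)$ fix an open $G$-invariant neighborhood $U$ of $K_{c^*}$ with $\gamma_G(\overline{U})=p$; by suitably iterating Lemma \ref{EDL} at level $c^*$ with $N=U$ obtain $\varepsilon\in(0,|c^*|)$ and an equivariant continuous map $\eta$ satisfying
\[
\eta\bigl(I^{c^*+\varepsilon}\setminus U\bigr)\subset I^{c^*-\varepsilon}.
\]
For $j$ large enough that $c_j>c^*-\varepsilon$, pick $A\in\Gamma_{j+p}$ with $\sup_A I\le c_{j+p}+\varepsilon\le c^*+\varepsilon$ and let $A':=A\setminus U$. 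From $A\subset A'\cup\overline{U}$ and Proposition \ref{GP}$iii)$, $\gamma_G(A')\ge\gamma_G(A)-p\ge j$, while $A'$ is compact, $G$-invariant, and contained in $I^{c^*+\varepsilon}\setminus U$. Hence $B:=\eta(A')$ is compact, $G$-invariant, satisfies $\gamma_G(B)\ge j$ by Proposition \ref{GP}$i)$, and $\sup_B I\le c^*-\varepsilon<0$ forbids $0\in B$; so $B\in\Gamma_j$, giving $c_j\le\sup_B I\le c^*-\varepsilon$, against $c_j>c^*-\varepsilon$.

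The main obstacle is promoting the small-time equivariant deformation $\alpha_s$ of Lemma \ref{EDL} (with $s_0\approx 0^+$) to the global map $\eta$ used in the last step, which must push the whole strip $I^{c^*+\varepsilon}\setminus U$ strictly below level $c^*-\varepsilon$. One builds $\eta$ by composing finitely many iterates of $\alpha_s$ on nested closed domains, verifying at each step that the image stays inside the admissible domain of the next iterate and that $G$-equivariance survives composition (as it does, through the Haar-averaging of the proof of Lemma \ref{EDL}). Once $\eta$ is in hand, the index arithmetic $\gamma_G(A)\le\gamma_G(A')+\gamma_G(\overline{U})$ and the $(\rm PS)$ compactness of $K_{c^*}$ close the argument routinely.
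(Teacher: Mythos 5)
Your first two stages are sound and essentially reproduce what the paper does (the paper delegates the criticality of each $c_j$ to the argument of \cite[Theorem 4.3]{Szulkin}, which is precisely the Ekeland-plus-small-deformation scheme on $(\Gamma_j,d_H)$ that you spell out; your observation that $\varphi(A_0)<0=I(0)$ forces $0\notin A_0$, hence $\gamma_G(A_0)\geq j$ via Proposition \ref{GP1}, is the right way to stay inside $\Gamma_j$). The problem is your third stage. The map $\eta$ with $\eta\bigl(I^{c^*+\varepsilon}\setminus U\bigr)\subset I^{c^*-\varepsilon}$ is a \emph{global} deformation of a whole (generally non-compact) sublevel strip, and it cannot be manufactured by ``composing finitely many iterates'' of the deformation in Lemma \ref{EDL}. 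That lemma is intrinsically local: it takes as input a single \emph{compact} set $A$ with $c\leq\sup_A I\leq c+\varepsilon$, produces a deformation defined only on a closed neighbourhood $W$ of that particular $A$, for times $s\in[0,s_0]$ with $s_0$ depending on $A$, and yields a decrease of only $-2\varepsilon s\leq -2\varepsilon s_0$ per application. There is no flow or semigroup structure, no control on how $s_0$ degenerates when you re-apply the lemma to the image set, and hence no guarantee that the accumulated decrease ever reaches the required $2\varepsilon$. In the smooth theory the global $\eta$ comes from integrating a pseudo-gradient vector field, which is exactly what is unavailable for $I\in(H_0)$; the entire point of Szulkin's framework is to avoid ever needing such an $\eta$.

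The fix is to run stage three with the same Ekeland mechanism you already used in stage two, which is what the paper does. Assuming $c_j\to c<0$, one picks $A_1\in\Gamma_{j_0+p}$ with $\sup_{A_1}I<c_{j_0}+\varepsilon^2<0$ (so $0\notin A_1$ and $\gamma_G(A_1)\geq j_0+p$ by Proposition \ref{GP1}), sets $A_2:=\overline{A_1\setminus N_{2\delta}(K_c)}$ so that $\gamma_G(A_2)\geq j_0$ by Proposition \ref{GP} $iv)$ and $A_2\in\Gamma_{j_0}$, and then applies Theorem \ref{PVE} to $\varphi_{j_0}$ on $\Gamma_{j_0}$ with $\delta=\varepsilon$, $\tau=1$ to obtain $A\in\Gamma_{j_0}$ with $d_H(A,A_2)\leq\varepsilon<\delta$ and $\varphi_{j_0}(B)-\varphi_{j_0}(A)\geq-\varepsilon\,d_H(A,B)$ for all $B\in\Gamma_{j_0}$. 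Because $A_2$ avoids $N_{2\delta}(K_c)$ and $\varepsilon<\delta$, the set $A$ avoids $N:=N_\delta(K_c)$, which by Lemma \ref{TL} contains $K_{c_{j_0}}$ for $j_0$ large; so $A$, $N$ and $K_{c_{j_0}}$ satisfy the hypotheses of Lemma \ref{DL}, and a \emph{single} small-time equivariant deformation $B:=\alpha_s(A)$ already contradicts the Ekeland inequality via $-\varepsilon s\leq\varphi(B)-\varphi(A)\leq-2\varepsilon s$. No global deformation, and no iteration, is needed.
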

\begin{proof}
	We first notice that conditions $i)$ and $ii)$ imply that $-\infty<c_j<0$. Now, we also point out that, by reading carefully the argument in \cite[Theorem 4.3]{Szulkin}, we can show that the sequence $(c_j)$ consists of critical values of $I$. In fact, the proof of \cite[Theorem 4.3]{Szulkin} only depends on the properties $i)-iv)$ in Proposition \ref{GP} in the case that $G=\mathbb{Z}_2$ where  $\gamma_G$ coincides with the genus of a symmetric set mentioned in Remark \ref{GT}. In this way, in view of the Proposition \ref{GP}, we can apply the argument used in \cite[Theorem 4.3]{Szulkin} in our case. It remains to show that $c_j \rightarrow 0$. Now, the definition of $c_j$ yields
	$$
	c_j\leq c_{j+1},\,\,\, \forall j \in \mathbb{N}.
	$$
	Hence, if $c_j \nrightarrow 0$ for $j \to \infty$, there exists $c<0$ such that $c_j\rightarrow c$. The $(\rm PS)$ condition ensures that $K_c$ is compact. Moreover, the assumptions on $I$ ensure that $K_c$ is symmetric and  $0\notin K_c$. Thereby, $K_c \in \Sigma$ and, by following the idea of Lemma \ref{TL}, as $c_j\rightarrow c$ and $K_{c_j}\neq\emptyset$, one has that $K_c\neq \emptyset$. By $vi)$ of Proposition \ref{GP} there is $\delta>0$ such that $\gamma_G(N_{2\delta}(K_c))=\gamma_G(K_c)$; note that $N_{\delta}(K_c)\neq\emptyset$. By $(G_*)$, we can assume that $\gamma_G(K_c)= p$ 
	$$ \begin{aligned}
	\varphi_j:\,&\Gamma_j\,\,\,\,\rightarrow\,\,\, (-\infty,\infty]\\
	&A \quad\longmapsto \varphi_j(A):=\sup_{u \in A}I(u).
	\end{aligned}
	$$
	Clearly $\varphi_j$ is $l.s.c$ since $I$ is too. Set
	$$\varepsilon_0:=\min\{1,\delta,-c\}$$
	and take $\varepsilon\in(0,\varepsilon_0)$ as in Lemma \ref{DL}. Now, let $A_1 \in \Gamma_{j+p}$ be such that
	\begin{equation*}
	c_{j+p}\leq \varphi_{j+p}(A_1)<c_{j+p}+\frac{\varepsilon^2}{2}.
	\end{equation*}
	Since $c_j\rightarrow c$, it follows that, for a convenient $j_0 \in \mathbb{N}$,
	$$ \varphi_{j+p}(A_1)<c_{j+p}+\frac{\varepsilon^2}{2}\leq c+\frac{\varepsilon^2}{2}\leq c_j+\varepsilon^2<c_j+\varepsilon<0,
	$$
	for $j\geq j_0$.
	Hence, by fixing $j=j_0$, we get $0\notin A_1$ and $\gamma_G(A_1)\geq j_0+p$ by Proposition \ref{GP1}. If we set $A_2:=\overline{A_1\setminus N_{2\delta}(K_c)}$ we also have
	$$
	\sup_{u \in A_2}I(u)\leq\sup_{u \in A_1}I(u)<c_{j_0}+\varepsilon^2<0,
	$$
	so that $0\notin A_2$ and $\gamma_G(A_2)\geq (j_0+p)-p=j_0$ by Proposition \ref{GP} - Part $iv)$. Consequently $A_2 \in \Gamma_{j_0}$. Employing Theorem \ref{PVE} to the function $\varphi_{j_0}:\Gamma_{j_0} \rightarrow (-\infty,\infty]$ (note that $\Gamma_{j_0}$ is complete), there exists $A \in \Gamma_{j_0}$ such that
	$$c_{j_0}\leq \sup_{u \in A} I(u)=\varphi_{j_0}(A)\leq\varphi_{j_0}(A_2)<c_{j_0}+\varepsilon,\,\,\, d_H(A,A_2) \leq \varepsilon$$
	as well as
	\begin{equation}\label{IE}
	\varphi_{j_0}(B)-\varphi_{j_0}(A) \geq -\varepsilon d_H(A,B) \,\,\, \forall B \in \Gamma_{j_0}.
	\end{equation}
	
	\noindent Since Lemma \ref{TL} gives $K_{c_{j_0}} \subset N_\delta(K_c)$ for $j_0\approx\infty$, by setting $N=N_\delta(K_c)$ we derive $A\cap N=\emptyset$, taking into account that $\varepsilon<\delta$. These informations ensure that $A$, $N$ and $K_{c_{j_0}}$ verify the hypothesis of deformation lemma (Lemma \ref{DL}).
	
	Thus by Lemma \ref{EDL} the existence of an equivariant deformation $\alpha_s$ is obtained. In this way, if we set $B:=\alpha_s(A)$, on account of Proposition \ref{GP} - Part $i)$, one has $B \in \Gamma_{j_0}.$ Now, combining the properties of $\alpha_s$ with (\ref{IE}) we derive the contradiction
	$$-2\varepsilon s\geq \varphi(B)-\varphi(A)\geq-\varepsilon s.$$
 This completes the proof.	
\end{proof}

\begin{remark} Here, we would like to point out that the condition $(G_*)$ that was assumed in the last theorem is not necessary when the group is finite, see Proposition \ref{GP}-$v)$.
\end{remark}

In the sequel, we state the last abstract result this section that complements the study made by Szulkin in \cite[Corollary 4.8]{Szulkin}. 

\begin{theorem}\label{SP}
Suppose that $I:=\Phi+\Psi\in(H_0)$ satisfies the $(\rm PS)$ condition, $\Phi$ and $\Psi$ are $G$-invariant functionals, $I(0)=0$ and that $(G_0)-(G_*)$ hold. Assume that there exist subspaces $Y,Z$ of $X$ such that $X=Y\oplus Z,$ $\dim Y<\infty,$ $Z$ is closed and
\begin{itemize}
	\item[$i)$] There are numbers $r,\rho>0$ such that $ I|_{\partial B_r(0)\cap Z} \geq \rho;$
	\item[$ii)$] For each positive integer $k$ there is a $k$-dimensional subspace $X_k$ of $X$ such that $I(u) \rightarrow -\infty$ as $\|u\|\rightarrow \infty$ with $u \in X_k$.
\end{itemize}	
	Then $I$ has infinitely many critical values. Furthermore, if there is a $c_0>0$ such that $I^{-c_0}$ has no critical points, then there exists a sequence $(c_j)$ of critical values of $I$ with $c_j\rightarrow \infty$.
\end{theorem}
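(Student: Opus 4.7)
In the spirit of Szulkin's minimax scheme and the Heinz-type argument from Theorem \ref{HT}, I would define
$$c_j := \inf_{A \in \Gamma_j}\sup_{u \in A} I(u), \qquad j \in \mathbb{N},$$
where $\Gamma_j$ is the class of (Hausdorff-closures of) compact $G$-invariant subsets of $X\setminus\{0\}$ with $\gamma_G$-index at least $j$, as introduced before Proposition \ref{GP1}. Condition (ii) applied to sufficiently large symmetric spheres inside the finite-dimensional $X_k$, combined with Proposition \ref{GP}-(ii), yields $c_j<\infty$ for every $j$; the linking condition (i), together with an index-theoretic intersection argument (variant of Lemma \ref{Intersection}) and Proposition \ref{GP}-(iv), gives $c_j\ge\rho>0$ whenever $j>\dim Y$. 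Hence $(c_j)$ is non-decreasing and bounded below by $\rho$.

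\textbf{Step 1: each $c_j$ is a critical value, and there are infinitely many of them.} If $K_{c_j}=\emptyset$, pick $A\in\Gamma_j$ with $\sup_A I\le c_j+\varepsilon$ and apply the equivariant deformation lemma (Lemma \ref{EDL}) with $N=\emptyset$ to produce $\alpha_s$ for which $\sup_{\alpha_s(A)}I\le c_j-\varepsilon s$; Proposition \ref{GP}-(i) gives $\gamma_G(\alpha_s(A))\ge j$ so that $\alpha_s(A)\in\Gamma_j$, contradicting the definition of $c_j$. To get infinitely many distinct values, I would argue by contradiction: if $(c_j)$ took only finitely many values, then some $c_{j_0}=c_{j_0+\ell}$ for every $\ell\ge 1$, and an iteration of the excision argument of Proposition \ref{GP}-(iv) together with the deformation lemma forces $\gamma_G(K_{c_{j_0}})=\infty$, violating $(G_*)$.

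\textbf{Step 2: $c_j\to\infty$ under the additional hypothesis.} Suppose toward contradiction that the non-decreasing sequence satisfies $c_j\to c^*<\infty$. By (PS), $K_{c^*}$ is compact; the hypothesis that $I^{-c_0}$ contains no critical points, combined with $c^*\ge\rho>0>-c_0$, ensures $K_{c^*}\in\Sigma$, and $(G_*)$ gives $p:=\gamma_G(K_{c^*})<\infty$, while Proposition \ref{GP}-(vi) supplies $\delta>0$ with $\gamma_G(N_{2\delta}(K_{c^*}))=p$. Fix $\varepsilon\in(0,\min\{1,\delta\})$ as in Lemma \ref{DL} applied with $N=N_\delta(K_{c^*})$, and pick $j$ so large that $c^*\le c_j+\varepsilon^2/2$. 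Choose $A_1\in\Gamma_{j+p}$ with $\sup_{A_1}I<c_{j+p}+\varepsilon^2/2\le c_j+\varepsilon$, and set $A_2:=\overline{A_1\setminus N_{2\delta}(K_{c^*})}$; by Proposition \ref{GP}-(iv) one has $\gamma_G(A_2)\ge j$, so $A_2\in\Gamma_j$ and $A_2\cap N_\delta(K_{c^*})=\emptyset$. Applying Ekeland's Variational Principle (Theorem \ref{PVE}) to the l.s.c.\ functional $\varphi_j(A):=\sup_A I$ on the complete metric space $(\Gamma_j,d_H)$ produces $A\in\Gamma_j$ close to $A_2$ with $\varphi_j(B)-\varphi_j(A)\ge-\varepsilon\,d_H(A,B)$ for every $B\in\Gamma_j$. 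Since $A\cap N_\delta(K_{c^*})=\emptyset$, Lemma \ref{EDL} supplies an equivariant deformation $\alpha_s$; setting $B:=\alpha_s(A)\in\Gamma_j$ (Proposition \ref{GP}-(i)) and combining Lemma \ref{DL}-(v) with the Ekeland inequality yields the contradiction
$$-2\varepsilon s \;\ge\; \varphi_j(B)-\varphi_j(A) \;\ge\; -\varepsilon s.$$

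\textbf{Main obstacle.} The delicate part is the bookkeeping of Step 2: one must simultaneously calibrate $\varepsilon$, $\delta$, and the index $j$ so that the $G$-index subtraction $\gamma_G(A_2)\ge(j+p)-p$, the compatibility of the deformation neighbourhood with $A_2$, and the Ekeland quantitative inequality all chain together with no loss. The role of the hypothesis that $I^{-c_0}$ has no critical points is precisely to rule out the competing Clark-type behaviour and guarantee that the would-be finite limit $c^*$ corresponds to a genuine compact critical set $K_{c^*}\in\Sigma$ on which the equivariant deformation machinery of Lemma \ref{EDL} is applicable.
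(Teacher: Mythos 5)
There is a genuine gap, and it occurs at the very foundation of your scheme: the choice of minimax classes. You set $c_j:=\inf_{A\in\Gamma_j}\sup_{A}I$ with $\Gamma_j$ the plain $G$-index classes (compact $G$-invariant sets with $\gamma_G(A)\ge j$), as in Theorem \ref{HT}. But under hypothesis $ii)$ the functional is unbounded below on each finite-dimensional $X_k$, so for every $j$ the sphere $S_R(0)\cap X_j$ belongs to $\Gamma_j$ for all $R$, while $\sup_{S_R(0)\cap X_j}I\to-\infty$ as $R\to\infty$; hence $c_j=-\infty$ for every $j$. The ``index-theoretic intersection argument'' you invoke to get $c_j\ge\rho$ for $j>\dim Y$ does not exist for these classes: a set of large index need not meet $\partial B_r(0)\cap Z$ at all (the sphere above has empty intersection with $\partial B_r(0)$ whenever $R\ne r$). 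This is precisely the classical obstruction that forces the symmetric mountain pass theorem (Rabinowitz, Szulkin) to use richer minimax classes. The paper follows Szulkin: it fixes $M_k:=\overline{B}_{R_k}(0)\cap X_k$ with $I|_{\partial M_k}\le -c_0$ and builds $\Lambda_j$ from sets of the form $\eta(M_k\setminus U)$, where $\eta$ is odd and odd-homotopic to the identity on $\partial M_k$ and $U$ is open symmetric of controlled index. It is the boundary condition on $\eta$ that forces every $A\in\Lambda_j$ with $j>\dim Y$ to link $\partial B_r(0)\cap Z$, giving $c_j\ge\rho$ (Lemma \ref{LC}, part $ii)$), and it is the hypothesis that $I^{-c_0}$ has no critical points that makes $\Lambda_j$ stable under the deformations (Lemma \ref{DL}, part $vi)$, combined with Lemma \ref{LC}, part $iv)$) --- not merely, as you suggest, a device to identify $K_{c^*}$.

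Your Step 2 (Ekeland on $(\Lambda_{j_0},d_H)$, excision of $N_{2\delta}(K_{c^*})$ with the index subtraction $\gamma_G(A_2)\ge (j+p)-p$, then the equivariant deformation producing $-2\varepsilon s\ge -\varepsilon s$) is structurally identical to the paper's argument and would be correct \emph{once} the classes are replaced by $\Lambda_j$ and the excision step is justified by Lemma \ref{LC}, part $v)$ (which requires $I|_{K_{c^*}}>-c_0$, automatic here since $c^*\ge\rho>0$) rather than by Proposition \ref{GP}, part $iv)$ alone. But as written, with $c_j=-\infty$, Steps 1 and 2 have nothing to stand on.
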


In order to prove Theorem \ref{SP} some notations are introduced. To this aim, let us fix $c_0>0$ such that $I^{-c_0}$ has no critical points and set $M_k:=\overline{B}_{R_k}(0)\cap X_k$ with $R_k>r$ and $I|_{\partial M}\leq -c_0$. Now, let us define the following sets
$$\mathcal{F}:=\{\eta\in C(M_k,X):\eta\, \text{is odd},\, \eta|_{\partial M_k}\approx Id|_{\partial M_k}\,\text{by an odd homotopy}\},
$$
for each $j \in \mathbb{N}$ and $k\geq j$,
$$\tilde{\Lambda}^k_j:=\left\{\begin{aligned}\eta(M_k\setminus U):\eta\in\mathcal{F}&,\, U\,\text{is symmetric and open in}\, M_k,\,U\cap\partial M_k=\emptyset,\\ 
& \text{with}\, \gamma_G(W)\leq k-j,\, \text{for}\,W\in\Sigma,\,W\subset U.\end{aligned}\right\}$$
and
$$\tilde{\Lambda}_j:=\displaystyle{\bigcup_{k\geq j}}\, \tilde{\Lambda}^k_j.$$
Finally, for each $j \in \N$, we fix 
$$\Lambda_j:=\{A\subset X: A\,\text{is compact, symmetric and for each open}\, U\supset A,\,\text{there is}\, A_0\in\tilde{\Lambda}_j,\,A_0\subset U\}.
$$
and 
$$c_j:=\inf_{A \in \Lambda_j}\sup_{u \in A}I(u).$$

By applying the same arguments used in \cite[Theorem 4.4, Lemma 4.6]{Szulkin} we can prove that $\Lambda_j$ verifies the properties $i)-v)$ below (it suffices to replace $\gamma$ in that reference by the $G$-index $\gamma_G$ and use the Proposition \ref{GP}).

\begin{lemma}\label{LC}
	The sets $\Lambda_j$ defined above satisfy the following claims:
\begin{itemize}
	\item[$i)$] $(\Lambda_j,d_H)$ is a complete metric space$;$
	\item[$ii)$] $c_j \geq \rho$, for all $j > \text{\rm dim} Y$;
	\item[$iii)$] $\Lambda_{j+1}\subset \Lambda_j$$;$
	\item[$iv)$] Let $A \in \Lambda_j$ and $W$ be a closed symmetric set containing $A$ in its interior. Moreover, if $\alpha:W\rightarrow X$ is an odd mapping such that $\alpha|_{W\cap I^{-c_0}}\approx Id|_{W\cap I^{-c_0}}$ by an odd homotopy, then $\alpha(A) \in \Lambda_j;$
	\item[$v)$] For each compact $B$ with $B \in \Sigma,$ $\gamma_G(B)\leq p,$ $I|_B>-c_0,$ there exists a number $\delta_0>0$ such that $A\setminus\text{int}(N_\delta(B))\in \Lambda_j,$ for $A \in \Lambda_{j+p},$ $\delta\in(0,\delta_0)$.
\end{itemize}
\end{lemma}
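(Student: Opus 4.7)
The plan is to transport Szulkin's proof of \cite[Theorem 4.4, Lemma 4.6]{Szulkin} to the present $G$-equivariant setting by systematically replacing the genus $\gamma$ by the $G$-index $\gamma_G$. The abstract properties of $\gamma_G$ needed in that blueprint — monotonicity, subadditivity, the deletion estimate, and constancy on small closed $\delta$-neighborhoods of compact sets — have all been recorded in Proposition \ref{GP}, while Proposition \ref{GP1} supplies the index lower bound underlying the intersection step. With these in hand, items $iii)$ and $i)$ are formal: $iii)$ is immediate from the definition, since $\gamma_G(W)\leq k-(j+1)$ a fortiori gives $\gamma_G(W)\leq k-j$; and for $i)$ the ambient space $(\mathcal{D}_G,d_H)$ is already complete, so I only need to check closedness of $\Lambda_j$ inside it. This follows from the Hausdorff-distance fact that any open $U\supset A$ also contains $A_n$ for $n$ large, so the approximating $\tilde\Lambda_j$-set produced by $A_n\in \Lambda_j$ for a small neighborhood of $A_n$ lies inside $U$ as well.

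For item $ii)$, I reduce via the definition of $\Lambda_j$ to showing that every $A_0=\eta(M_k\setminus U)\in \tilde\Lambda^k_j$ meets $\partial B_r(0)\cap Z$; once granted, hypothesis $i)$ on $I$ gives $I|_A\geq \rho$ and hence $c_j\geq\rho$. Suppose for contradiction $A_0\cap (\partial B_r(0)\cap Z)=\emptyset$. Using the decomposition $Z=\overline{\bigoplus_{i\geq \dim Y+1}X_i}$, composing $\eta$ with a $G$-equivariant retraction away from $\partial B_r(0)\cap Z$ and splicing via the equivariant homotopy $\eta|_{\partial M_k}\approx Id|_{\partial M_k}$ produces an equivariant map into an admissible $Y^{j-1}$-type target, contradicting the admissibility in $(G_0)$ together with the bound $\gamma_G(W)\leq k-j$ on all $W\in\Sigma$, $W\subset U$.

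Item $iv)$ is a routine deformation-invariance check. For $\alpha$ as stated, $\alpha(A)$ is compact and $G$-invariant, and for any open $V\supset \alpha(A)$ the continuity of $\alpha$ gives an open $U\supset A$ with $\alpha(\bar U\cap W)\subset V$. Pick $A_0=\eta(M_k\setminus U')\in\tilde\Lambda^k_j$ with $A_0\subset U$; then $\alpha\circ\eta$ is odd and $(\alpha\circ\eta)|_{\partial M_k}\approx Id|_{\partial M_k}$ by concatenating the odd homotopies $\eta|_{\partial M_k}\approx Id$ and $\alpha|_{W\cap I^{-c_0}}\approx Id$ — the concatenation is legitimate because $\partial M_k\subset I^{-c_0}$ by the choice $I|_{\partial M_k}\leq -c_0$. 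Hence $\alpha(A_0)\in \tilde\Lambda^k_j$ sits inside $V$, proving $\alpha(A)\in \Lambda_j$.

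Item $v)$, the deletion estimate, is the main obstacle. Proposition \ref{GP}\,$vi)$ yields $\delta_0>0$ with $\gamma_G(N_\delta(B))=\gamma_G(B)\leq p$ for every $\delta\in(0,\delta_0)$, and the hypothesis $I|_B>-c_0$ lets me shrink $\delta_0$ so that $N_\delta(B)\cap \partial M_k=\emptyset$ (since $I\leq -c_0$ on $\partial M_k$). Given $A\in \Lambda_{j+p}$ and a preassigned open $V\supset A\setminus\text{int}(N_\delta(B))$, approximate $A$ by $A_0=\eta(M_k\setminus U)\in \tilde\Lambda^k_{j+p}$ with $A_0\setminus \text{int}(N_\delta(B))\subset V$, and set $U':=U\cup \eta^{-1}(\text{int}(N_\delta(B)))$; then $U'$ is $G$-invariant, open, and disjoint from $\partial M_k$. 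Any $W\in\Sigma$ with $W\subset U'$ can be split (after taking closures) as $W_1\cup W_2$ with $W_1\subset U$ and $W_2\subset \overline{\eta^{-1}(N_\delta(B))}$, which gives
\[\gamma_G(W)\leq \gamma_G(W_1)+\gamma_G(W_2)\leq (k-(j+p))+p=k-j\]
by the bound on $U$ and on $B$ together with monotonicity of $\gamma_G$. Hence $\eta(M_k\setminus U')\in\tilde\Lambda^k_j$ and sits in $V$, so $A\setminus \text{int}(N_\delta(B))\in \Lambda_j$. The central difficulty here is producing all of these neighborhoods, retractions and intermediate sets $G$-equivariantly; this requires repeated use of the Haar-averaging trick (\ref{Haar3}) to make otherwise arbitrary continuous maps and open sets $G$-invariant — the main technical burden that goes beyond the $\mathbb Z_2$ argument of Szulkin.
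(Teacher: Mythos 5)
Your proposal follows essentially the same route as the paper, which does not reprove these facts at all but simply declares that they follow from Szulkin's arguments in \cite[Theorem 4.4, Lemma 4.6]{Szulkin} once the genus $\gamma$ is replaced by the $G$-index $\gamma_G$ and Proposition \ref{GP} is invoked (with the added remark that item $v)$ is stated differently from \cite[Lemma 4.6]{Szulkin} but is a direct consequence of the arguments there). Your filled-in details for $i)$--$v)$ are consistent with that blueprint, so the two proofs coincide in approach; the paper offers strictly less detail than you do.
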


\noindent We point out that step $v)$ is different with respect to the statement of \cite[Lemma 4.6]{Szulkin}. However, the main assertion is a direct consequence of the arguments proved there.

\begin{proof}[Proof of Theorem \ref{SP}]
	The first part of the proof can be derived by \cite[Corollary 4.8]{Szulkin}. Hence, it remains to show that $c_j \rightarrow \infty$ as $j \to \infty$. Now, by Lemma \ref{LC} - Part $iii)$, it follows that
	$$ c_j\leq c_{j+1}\,\,\,\, \forall j \in\mathbb{N}.$$
	Thus, if $c_j  \nrightarrow \infty$, by $ii)$ of last lemma, there exists $c>0$ such that $c_j\rightarrow c$. Arguing as in the proof of Theorem \ref{HT}, we deduce that $K_c$ is a compact symmetric set with $0 \notin K_c$ and $K_c\neq \emptyset$. Hence, for a convenient $\delta>0$, one has $\gamma_G(N_{2\delta}(K_c))=\gamma_G(K_c)=:p\in\mathbb{N}.$ Now, set
	$\varepsilon_0:=\min\{1,\,\delta\},$
	take $\varepsilon \in (0,\varepsilon_0)$ as in Lemma \ref{DL} and define
	
	$$\begin{aligned}
	\varphi_j:\,&\Lambda_j\,\,\,\rightarrow\,\,\,\,\,\,\,(-\infty,\infty]\\
	& A\quad\longmapsto \varphi(A):=\sup_{u \in A}I(u).
	\end{aligned}
	$$
	Clearly $\varphi_j$ is a l.s.c. functional that is bounded from below for every $j\in\mathbb{N}$. Hence, let $A_1 \in \Lambda_{j+p}$ be such that
	$$\varphi_{j+p}(A_1)<c_{j+p}+\frac{\varepsilon^2}{2}.$$
	Consequently, for some $j_0 \in \mathbb{N}$, 
	$$\varphi_{j+p}(A_1)<c_j+\varepsilon,$$
for $j\geq j_0$.
	Now, if $A_2:=\overline{A_1\setminus\text{int}(N_{2\delta}(K_c))}$, by Part $v)$ of Lemma \ref{LC} we have $A_2 \in \Lambda_{j_0}$ and $\varphi_{j_0}(A_2)\leq\varphi_{j_0}(A_1)$. Moreover, by Theorem \ref{PVE}, there exists $A \in \Lambda_{j_0}$ such that
	$$\varphi_{j_0}(A)\leq\varphi_{j_0}(A_2)<c_{j_0}+\varepsilon\,\,\,\,\,d_H(A,A_2)\leq\varepsilon$$
	as well as
	\begin{equation}\label{EI}
	\varphi_{j_0}(B)-\varphi_{j_0}(A)\geq -\varepsilon d_H(B,A)\,\,\,\,\,\forall B\in\Lambda_{j_0}.
	\end{equation}
	
	\noindent If we set $N:=N_\delta(K_c)$, Lemma \ref{TL} implies that $K_{c_{j_0}}\subset N$ if $j_0\approx\infty$. The definition of $\varepsilon_0$ yields $A\cap N=\emptyset$ and
	$$c_{j_0}\leq \sup_{u \in A}I(u) <c_{j_0}+\varepsilon.$$
	\noindent Then, we can apply Lemma \ref{EDL} to obtain an equivariant deformation $\alpha_s$. If we set $B:= \alpha_s(A)$, by Part $vi)$ of Lemma \ref{DL} and Part $iv)$ of Lemma \ref{LC}, one has $B \in \Lambda_{j_0}$. Finally, a contradiction is achieved by replacing $B$ in (\ref{EI}) and arguing as in the proof of Theorem \ref{HT}.	
\end{proof}

\section{Some Applications to elliptic problems}

In this section we illustrate how the abstract results of the previous section can be applied  to establish the existence of infinitely many solutions for some classes of elliptic problems.

\subsection{A logarithmic variational inclusion problem}
In this subsection, we study the existence of infinitely many solutions for the logarithmic inclusion problem
$$
\left\{\begin{aligned}
-&\Delta u + u + \partial F(x,u) \ni u\log u^2,  \;\;\mbox{in}\;\;\mathbb{R}^{N} \\
& u \in H^{1}(\mathbb{R}^{N}),
\end{aligned}
\right. \leqno{(P_1)}
$$
where $F(x,t):= \displaystyle{\int_{0}^{t}}f(x,s)ds$ is a convex locally Lipschitz function with $F(x,\cdot)\geq 0$ for all $x \in \mathbb{R}^N$ and satisfying some technical conditions. 

In the sequel, we assume that $f$ is a $N$-measurable function that satisfies the following conditions:
\begin{itemize}
\item[$(f_1)$] There is a nonnegative and radial function $h \in L^1(\mathbb{R}^N)\cap L^\infty(\mathbb{R}^N)$ satisfying 
$$|f(x,t)|\leq h(x)|t|,\,\,\,\, \forall x \in \mathbb{R}^N \quad \mbox{and} \quad \forall t \in \R.$$
\item[$(f_2)$] $f(x,-t)=-f(x,t)$ and $f(|x|,t)=f(x,t)$ for all $x \in \mathbb{R}^N$ and $t\in\mathbb{R}$.
\item [$(f_3)$] There is $C>0$ such that for any $\eta_t \in \partial F(x,t)$ it holds
$$ F(x,u)-\frac{1}{2}\eta_tt\geq-Ch(x),\,\,\,\text{a.e}\,\,\,x \in \mathbb{R}^N,\,\forall\,\,t \in \mathbb{R}.$$
\end{itemize}
\begin{example}[A function satisfying $(f_1)-(f_3)$]
	Consider $F(x,t):= h(x)\displaystyle{\int_{0}^{t}}H(|s|-a)s\,ds$, where $a>0$, $h \in L^1(\mathbb{R}^N)\cap L^\infty(\mathbb{R}^N)$ is nonnegative and radial and $H$ is the Heaviside function, i.e.,
	$$
	H(t):=\left\{\begin{aligned}
	&0,\,\,\,\, t \leq 0\\
	&1,\,\,\,\, t>0.
	\end{aligned}
	\right.
	$$
	Notice that, in this case,
	$$
\partial F(x,t)=h(x)\left\{\begin{aligned}
	&\{s\}\,\,\,\, &|s|>a,\\
	&[-a,0]\,\,\,& s=-a,\\
	&[0,a]\,\,\,&s=a,\\
	&\{0\}\,\,\,&|s|<a.
	\end{aligned}
	\right.
	$$
	The reader is invited to note that $(f_1)-(f_3)$ occur for this example.  
\end{example}

Now, consider the energy functional associated to  problem $(P_1)$ given by
\begin{equation*}
I(u):=\frac{1}{2}\int_{\mathbb{R}^N}(|\nabla u|^2+|u|^2) + \int _{\mathbb{R}^N}F(x,u) - \int_{\mathbb{R}^N}L(u),\,\,\,  u \in H^1(\mathbb{R}^N),
\end{equation*}
where $$L(t):= -\displaystyle\frac{t^2}{2}+\frac{t^2\log t^2}{2}\quad \forall\,t\in \R$$. 

Hereafter, we make use of the approach given in \cite{Alves-de Morais, Alves-Ji, Ji-Szulkin} to decompose $I$ as a sum of a $C^1$ functional and a convex l.s.c. functional. To this aim, fixed $\delta>0$ sufficiently small, we set
$$
F_1(s):=\left\{\begin{aligned}
&0  \quad \,& s=0\\
-\frac{1}{2} &s^2 \log s^2\quad &0<|s|<\delta\\
-\frac{1}{2} &s^2 (\log \delta^2 +3) + 2\delta|s| - \frac{\delta^2}{2}&|s|\geq \delta
\end{aligned}
\right.
$$
and
$$
F_2(s):=	\left\{\begin{aligned}
&0  \quad \,& s=0\\
-\frac{1}{2} &s^2 \log \left(\frac{s^2}{\delta^2}\right) + 2\delta|s| -\frac{3}{2}s^2-\frac{\delta^2}{2}\,&|s|\geq \delta
\end{aligned}
\right.
$$	
for every $s\in \R.$
Therefore
$$F_2(s)-F_1(s)=\frac{1}{2}s^2 \log s^2\,\,\,\, \forall s \in \mathbb{R},$$
and
\begin{equation}\label{Functional2}
I(u)=\frac{1}{2}\|u\|^2+\int_{\mathbb{R}^N}F(x,u)+\int_{\mathbb{R}^N}F_1(u)-\int_{\mathbb{R}^N}F_2(u)\,\,\,\, u \in H^1(\mathbb{R}^N),
\end{equation}
where $\|\cdot \|$ denotes the norm in $H^1(\mathbb{R}^N)$ induced by the inner product given by
\begin{equation*}
\langle u, v \rangle:=\int_{\mathbb{R}^N} (\nabla u \nabla v+2uv),\,\,\, u,v \in H^1(\mathbb{R}^N).
\end{equation*}	
According to \cite[Section 2]{Alves-de Morais} and \cite[Section 2]{Ji-Szulkin} the functions $F_1$ and $F_2$ satisfy the following conditions:
\begin{itemize}
\item[($A_1)$] $F_1$ is an even function with $F_1'(s)s\geq 0$ and $F_1\geq 0$. Moreover $F_1 \in C^1(\mathbb{R},\mathbb{R})$ and it is also convex if $\delta \approx 0^+$;
\item[($A_2)$] $F_2 \in C^1(\mathbb{R},\mathbb{R})$ and for each $p \in (2,2^*)$, there exists $C=C_p>0$ such that
$$ |F_2'(s)|\leq C|s|^{p-1}\,\,\,\, \forall s \in \mathbb{R}.$$
\end{itemize}
 Now, by $(A_1)$ and $(A_2)$, it is easily seen that $I \in (H_0)$ with
$$\Phi(u):= \frac{1}{2}\|u\|^2 - \int_{\mathbb{R}^N} F_2(u)$$
and
$$\Psi(u):=\int_{\mathbb{R}^N}F_1(u)+\int_{\mathbb{R}^N}F(x,u).$$

\noindent We notice that $\Psi=\Psi_1+\Psi_2$, where
$$\Psi_1(u):=\displaystyle{\int _{\mathbb{R}^N}}F_1(u)\,\,\,
\text{and}\,\,\, \Psi_2(u):=\displaystyle{\int_{\mathbb{R}^N}}F(x,u).$$

\indent  
Direct arguments and \cite[Lemma 2.1]{Alves-de Morais} ensure the validity of the next result.
\begin{lemma}\label{Psi1}
Let $\Psi_1:H^1(\mathbb{R}^N)\rightarrow(-\infty,\infty]$ be the functional defined above. Then
\begin{itemize}
	\item[$i)$] $D(I)=D(\Psi_1)$, that is $I(u)<\infty$ if and only if $\Psi_1(u)<\infty$.
	\item[$ii)$] Let $\Omega \subset \mathbb{R}^N$ be a bounded domain with regular boundary. Then the functional
	\begin{equation}
	\tilde{\Psi}_1(u)=\int_{\Omega}F_1(u)
	\end{equation}
	belongs to $C^1(H^1(\Omega),\mathbb{R})$.
\end{itemize}
\end{lemma}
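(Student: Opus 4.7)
The strategy for part $i)$ is to inspect every summand in the decomposition \eqref{Functional2} and show that only $\int_{\mathbb{R}^N}F_1(u)$ can be infinite. The quadratic term $\frac{1}{2}\|u\|^2$ is finite by definition of $H^1(\mathbb{R}^N)$. Hypothesis $(f_1)$ together with $F(x,t)=\int_0^t f(x,s)ds$ yields the pointwise bound $|F(x,t)|\leq \frac{1}{2}h(x)t^2$, and since $h\in L^\infty(\mathbb{R}^N)$ and $u\in L^2(\mathbb{R}^N)$ one has $\int_{\mathbb{R}^N}F(x,u)\in \mathbb{R}$. Finally, condition $(A_2)$ provides $|F_2(s)|\leq \frac{C}{p}|s|^p$ with some $p\in(2,2^*)$, so the Sobolev embedding $H^1(\mathbb{R}^N)\hookrightarrow L^p(\mathbb{R}^N)$ ensures $\int_{\mathbb{R}^N}F_2(u)\in\mathbb{R}$. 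Thus $I(u)<\infty$ if and only if $\Psi_1(u)=\int_{\mathbb{R}^N}F_1(u)<\infty$, which is precisely $D(I)=D(\Psi_1)$.

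For part $ii)$ I would follow the computation carried out in \cite[Lemma 2.1]{Alves-de Morais}. A direct calculation from the definition of $F_1$ shows that $F_1\in C^1(\mathbb{R},\mathbb{R})$ with $F_1'(s)=-s\log s^2 - s$ for $|s|\leq \delta$ and $F_1'(s)=-s(\log\delta^2+3)+2\delta\,\mathrm{sgn}(s)$ for $|s|\geq \delta$. Using the elementary inequality $|s\log s^2|\leq C_\varepsilon |s|^{1-\varepsilon}$ valid for $|s|\leq \delta$, together with the linear control for $|s|\geq\delta$, one gets the growth bound
\begin{equation*}
|F_1'(s)|\leq C_\varepsilon \bigl(|s|^{1-\varepsilon}+|s|\bigr),\qquad s\in\mathbb{R},
\end{equation*}
for any fixed $\varepsilon\in(0,1)$. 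Since $\Omega$ is bounded, the Sobolev embeddings $H^1(\Omega)\hookrightarrow L^q(\Omega)$ hold for every $q\in [1,2^*)$, so the standard Nemytskii-operator argument applies: the dominated convergence theorem delivers continuity of $\tilde\Psi_1:H^1(\Omega)\to\mathbb{R}$, the Gateaux derivative is given by $\langle \tilde\Psi_1'(u),\varphi\rangle=\int_\Omega F_1'(u)\varphi$, and the continuity of $u\mapsto \tilde\Psi_1'(u)$ into $H^{-1}(\Omega)$ again follows from the dominated convergence theorem and the above growth estimate. Altogether $\tilde\Psi_1\in C^1(H^1(\Omega),\mathbb{R})$.

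The main technical obstacle is taming the logarithmic singularity of $F_1'$ at the origin. On a bounded domain this is harmless thanks to the interpolation inequality above, because every subcritical $L^q$-norm on $\Omega$ is finite for $H^1$ functions; the boundedness of $\Omega$ also removes the possible issues coming from the tails. Over the whole of $\mathbb{R}^N$ the same reasoning breaks down, which is precisely the reason why in part $i)$ one is forced to allow $\Psi_1(u)=+\infty$ for some $u\in H^1(\mathbb{R}^N)$ and why $I$ cannot be expected to belong to $C^1(H^1(\mathbb{R}^N),\mathbb{R})$; this justifies the Szulkin-type decomposition rather than a classical smooth variational setup.
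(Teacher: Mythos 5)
Your proposal is correct and follows exactly the route the paper intends: the paper disposes of this lemma with the single line ``Direct arguments and \cite[Lemma 2.1]{Alves-de Morais} ensure the validity of the next result,'' and your part $i)$ is precisely those direct arguments (term-by-term finiteness of the decomposition \eqref{Functional2} using $(f_1)$, $(A_2)$ and $F_1\geq 0$), while your part $ii)$ reproduces the Nemytskii-operator computation of the cited reference. The only cosmetic quibble is writing $H^{-1}(\Omega)$ for the dual of $H^{1}(\Omega)$; otherwise the details are sound.
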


Moreover, according to \cite{Chang1}, the structural conditions on $F$ assure that the functional $\Psi_2:H^1(\mathbb{R}^N)\rightarrow\mathbb{R}$
is convex and l.s.c. as well as $\Psi_2 \in Lip_{loc}(H^1(\mathbb{R}^N),\mathbb{R})$.\par
\smallskip
From now on, for each $u \in H^1(\mathbb{R}^N)$, let us consider the functional  $\varphi_1^u$ defined by
\begin{equation}\label{Fam}
\langle \varphi_1^u,v\rangle:=\int_{\mathbb{R}^N}F_1'(u)v,\,\,\,\, \forall v \in C_0^\infty(\mathbb{R}^N).
\end{equation}

If
$$\|\varphi_1^u\|:=\displaystyle{\sup_{v \in C_0^\infty(\mathbb{R}^N),\, \|v\|\leq 1}} \langle \varphi_1^u,v \rangle <\infty,$$
then $\varphi_1^u$ can be extended to a continuous linear functional on $H^1(\mathbb{R}^N)$.\par
 Moreover, if  $\tilde{I}:H^{1}(\mathbb{R}^N) \to (-\infty,+\infty]$ denotes the functional given by
$$
\tilde{I}(u):= \frac{1}{2}\|u\|^2+\int_{\mathbb{R}^N}F_1(u)-\int_{\mathbb{R}^N}F_2(u),
$$
then $\tilde{I} \in (H_0)$ and $I=\tilde{I}+\Psi_2$.\par
By \cite[Lemma 2.2 and Corollary 2.1]{Alves-de Morais} the following lemma holds.
\begin{lemma} \label{Fam2}
	If $u \in D(\tilde{I})$ and $\|\varphi_1^u\|<\infty$ then there is a unique functional in $\partial \tilde{I}(u)$,  denoted by $\tilde{I}'(u),$ such that
	\begin{equation}\label{Fam2.1}
	\tilde{I}'(u)(v)= \langle \Phi'(u),v \rangle +\int_{\mathbb{R}^N}F'_1(u)v\,\,\,\, \forall v \in C_0^\infty(\mathbb{R}^N).
	\end{equation}
	Furthermore, $F_1'(u)u\in L^1(\mathbb{R}^N),$ and
	\begin{equation}
	\tilde{I}'(u)(u) = \int_{\mathbb{R}^N}(|\nabla u|^2+|u|^2)-\int_{\mathbb{R}^N}u^2 \log u^2,
	\end{equation}
	as well as
	\begin{equation}
	\tilde{I}(u)-\frac{1}{2}\tilde{I}'(u)(u)=\frac{1}{2}\int_{\mathbb{R}^N}|u|^2.
	\end{equation}
\end{lemma}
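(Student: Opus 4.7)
The plan is to exploit the decomposition $\tilde I = \Phi + \Psi_1$, with $\Phi \in C^1(H^1(\R^N),\R)$ and $\Psi_1$ convex and lower semicontinuous, and to identify $\partial \tilde I(u)$ via the convex subdifferential of $\Psi_1$. From the definition of the Szulkin subdifferential in the class $(H_0)$, one sees that $\varphi \in \partial \tilde I(u)$ if and only if $\varphi - \Phi'(u) \in \partial_s \Psi_1(u)$; the task therefore reduces to proving that, under the hypothesis $\|\varphi_1^u\| < \infty$, the set $\partial_s \Psi_1(u)$ consists exactly of the continuous extension of $\varphi_1^u$ to $H^1(\R^N)$.

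To carry this out, I would fix $\zeta \in \partial_s \Psi_1(u)$ and test the defining inequality with $v = u + tw$ for $w \in C_0^\infty(\R^N)$ and $t \in \R \setminus \{0\}$. Since $F_1 \in C^1(\R,\R)$ is convex, the difference quotient $[F_1(u + tw) - F_1(u)]/t$ is monotone in $t$ and, for small $|t|$, dominated by an integrable function (using that $w$ has compact support and that $F_1'$ is locally bounded with integrable local behavior around $0$), so dominated convergence yields $(\Psi_1(u + tw) - \Psi_1(u))/t \to \int_{\R^N} F_1'(u) w$ as $t \to 0^\pm$. Comparing the two one-sided limits gives $\langle \zeta, w\rangle = \int_{\R^N} F_1'(u) w = \langle \varphi_1^u, w\rangle$ for every test function. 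Conversely, the pointwise convexity inequality $F_1(v) \geq F_1(u) + F_1'(u)(v - u)$, integrated over $\R^N$, shows that the continuous extension of $\varphi_1^u$ to $H^1(\R^N)$ (which exists precisely by $\|\varphi_1^u\|<\infty$) belongs to $\partial_s \Psi_1(u)$. Density of $C_0^\infty(\R^N)$ in $H^1(\R^N)$ then forces $\partial_s \Psi_1(u) = \{\varphi_1^u\}$, and hence $\partial \tilde I(u) = \{\Phi'(u) + \varphi_1^u\} =: \{\tilde I'(u)\}$, which establishes (\ref{Fam2.1}).

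For the remaining claims, the integrability $F_1'(u) u \in L^1(\R^N)$ follows by taking $v = 2u$ in the convexity inequality, which gives the pointwise bound $F_1'(u) u \leq F_1(2u) - F_1(u)$; both right-hand integrals are finite because the explicit form of $F_1$ together with $(A_1)$ implies $F_1(s) \leq C(|s|^2 + 1)$, so $\Psi_1$ is finite on all of $H^1(\R^N)$. To evaluate $\tilde I'(u)(u)$, I would approximate $u$ by $u_n \in C_0^\infty(\R^N)$ with $u_n \to u$ in $H^1(\R^N)$; continuity of $\varphi_1^u$ yields $\int F_1'(u) u_n = \varphi_1^u(u_n) \to \varphi_1^u(u)$, while the sign property $F_1'(u)u \geq 0$ from $(A_1)$ combined with Fatou's lemma and the upper bound $F_1'(u)u_n \leq F_1(u + u_n) - F_1(u)$ (again convexity) permit the passage $\int F_1'(u) u_n \to \int F_1'(u) u$, so that $\varphi_1^u(u) = \int F_1'(u) u$. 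The two displayed identities then reduce to algebra: from $F_2 - F_1 = \tfrac{1}{2} s^2 \log s^2$ one obtains $(F_2 - F_1)'(s)\, s = s^2 \log s^2 + s^2$, and inserting this, together with $\|u\|^2 = \int|\nabla u|^2 + 2\int u^2$, into $\tilde I'(u)(u) = \|u\|^2 - \int F_2'(u)u + \int F_1'(u)u$ and into $\tilde I(u) - \tfrac{1}{2}\tilde I'(u)(u)$ produces the claimed formulas. The main obstacle is exactly the limit passage $\int F_1'(u) u_n \to \int F_1'(u) u$: the logarithmic singularity of $F_1'$ near the origin rules out a naive dominated convergence argument at the $H^1$-level, and the resolution relies on the sign condition $F_1'(u)u\geq 0$ coupled with the one-sided convexity control above, in the spirit of the approach employed in \cite{Alves-de Morais}.
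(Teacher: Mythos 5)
Your overall strategy (reduce to the convex subdifferential of $\Psi_1$, compute one-sided directional derivatives along $C_0^\infty$ directions, then extend by density) is the right one and is essentially what the paper does, since the paper proves nothing here itself but defers entirely to \cite[Lemma 2.2 and Corollary 2.1]{Alves-de Morais}. However, there is one concretely false step. You assert that $F_1(s)\leq C(|s|^2+1)$ implies that ``$\Psi_1$ is finite on all of $H^1(\mathbb{R}^N)$,'' and you use this to conclude $F_1(2u)-F_1(u)\in L^1(\mathbb{R}^N)$. The pointwise bound is true but the conclusion does not follow: integrating $C(|u|^2+1)$ over the infinite-measure set $\mathbb{R}^N$ gives $+\infty$, and the sharper bound $F_1(s)\leq C_\varepsilon |s|^{2-\varepsilon}$ near $s=0$ does not help either, since $\int_{\mathbb{R}^N}|u|^{2-\varepsilon}$ can diverge for $u\in H^1(\mathbb{R}^N)$. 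In fact $D(\Psi_1)\subsetneq H^1(\mathbb{R}^N)$ is the entire raison d'\^etre of the nonsmooth framework of this paper (this is exactly the statement $D(I)=D(\Psi_1)$ in Lemma \ref{Psi1}); if $\Psi_1$ were everywhere finite the functional would be far better behaved. The correct route to $F_1(2u)\in L^1(\mathbb{R}^N)$ is to use the hypothesis $u\in D(\tilde I)=D(\Psi_1)$ together with the doubling identity $F_1(2s)=4F_1(s)-(2\log 4)s^2$ for $0<|s|<\delta/2$, plus the fact that $\{|u|\geq \delta/2\}$ has finite measure; only then does your convexity bound $0\leq F_1'(u)u\leq F_1(2u)-F_1(u)$ deliver $F_1'(u)u\in L^1(\mathbb{R}^N)$.

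A secondary soft spot, which you flag but do not close, is the identification $\langle\varphi_1^u,z\rangle=\int_{\mathbb{R}^N}F_1'(u)z$ for the Hahn--Banach/density extension and general $z\in H^1(\mathbb{R}^N)$ (needed both to place $\Phi'(u)+\varphi_1^u$ in $\partial\tilde I(u)$ and to evaluate $\tilde I'(u)(u)$). Your proposed Fatou argument is fragile as stated, because $F_1'(u)u_n$ has no sign for a generic approximating sequence $u_n\to u$; the argument in \cite{Alves-de Morais} uses carefully chosen approximations precisely to exploit the sign of $F_1'(u)u$. Since you explicitly defer to that reference for this passage, I would not count it as a fatal gap, but as written the proof is not self-contained there. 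The final algebraic identities are correct once these two points are repaired (note in particular that the paper's inner product carries the weight $2uv$, which you have accounted for).
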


\begin{remark}\rm{
Lemma \ref{Fam2} remains valid if we take $\tilde{J}:=\tilde{I}|_{H_{rad}^1(\mathbb{R}^N)}$. Indeed, the arguments used in \cite[Lemma 2.2 and of Corollary 2.1]{Alves-de Morais} can be adapted to the radial space $H_{rad}^1(\mathbb{R}^N)$ by taking $\{\varphi_1^u\}\subset H_{rad}^1(\mathbb{R}^N)$ and
	$$
	\langle \varphi_1^u,v \rangle=\int_{\mathbb{R}^N}F_1'(u)v\,\,\,\,\, v \in C_{0,\,rad}^\infty(\mathbb{R}^N).
	$$}
\end{remark}
\noindent The notion of solution for problem $(P_1)$ requires some comments. To this aim, let us define the functions
\begin{equation}\label{Under}
\underline{f}(x,t):= \lim_{r\downarrow 0} ess \inf\{f(x,s): |s-t|<r\}
\end{equation}
and
\begin{equation}\label{Over}
\overline{f}(x,t):= \lim_{r\downarrow 0} ess \sup\{f(x,s): |s-t|<r\}.
\end{equation}

According to \cite[Section 2]{Chang1} if $F(x,t)=\displaystyle{\int_{0}^{t}}f(x,s)\,ds$, then 
$$\partial F(x,t)=[\underline{f}(x,t),\overline{f}(x,t)],$$
and the following definition makes sense. 
\begin{definition}\label{AWS}
A function
	$u \in H^1(\mathbb{R}^N)$ is said to be a solution of $(P_1)$ if $u^2\log u^2 \in L^1(\mathbb{R}^N)$ and there exists $\rho \in L^2(\mathbb{R}^N)$ such that 
$$\rho(x) \in [\underline{f}(x,u(x)),\overline{f}(x,u(x))]\quad \mbox{ a.e in}\quad \mathbb{R}^N$$ and
	\begin{equation}\label{Solution}
	\int_{\mathbb{R}^N} (\nabla u \nabla \phi + u\phi)+\int_{\mathbb{R}^N}\rho\phi=\int_{\mathbb{R}^N}u\log u^2\phi, \,\,\, \forall \phi \in C_0^{\infty}(\mathbb{R}^N).
	\end{equation}
\end{definition}

Next technical result is given in \cite[Lemma 4.1]{AlvesJVJA2}.

\begin{lemma}\label{S3}
The functions  $\underline{f}$ and $\overline{f}$ are $N$-measurable functions, ${\Psi}_2 \in Lip_{loc}(L^2(\mathbb{R}^N),\mathbb{R})$
	and 
	\begin{equation}\label{Inclusion}
	\partial {\Psi}_2(u)\subseteq\partial F(x,u)=[\underline{f}(x,u(x)),\overline{f}(x,u(x))],
	\end{equation}
for every $u \in L^2(\mathbb{R}^N)$.
\end{lemma}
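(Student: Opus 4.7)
The plan is to establish the three assertions in order, leveraging the structural assumption $(f_1)$ and standard results from nonsmooth analysis on integral functionals.

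First, for the $N$-measurability of $\underline{f}$ and $\overline{f}$, I would represent the essential infimum and supremum as monotone limits along rationals:
$$\underline{f}(x,t) = \sup_{r \in \mathbb{Q}_+} \operatorname{ess\,inf}\{f(x,s): |s-t|<r\}, \qquad \overline{f}(x,t) = \inf_{r \in \mathbb{Q}_+} \operatorname{ess\,sup}\{f(x,s): |s-t|<r\}.$$
For fixed $t$ and $r$, the essential infimum/supremum over the interval $(t-r, t+r)$ is $x$-measurable because $f$ is $N$-measurable and essential inf/sup over an interval of a Carathéodory-type function is Borel in $x$. A countable supremum (resp. infimum) preserves $x$-measurability, while the monotone dependence on $r$ combined with the Lipschitz behavior of $F(x,\cdot)$ yields the required regularity in $t$.

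Second, to prove $\Psi_2 \in Lip_{loc}(L^2(\mathbb{R}^N), \mathbb{R})$, I would use $(f_1)$ pointwise. For $u, v \in L^2(\mathbb{R}^N)$, the identity $F(x,u(x)) - F(x,v(x)) = \int_{v(x)}^{u(x)} f(x,s)\,ds$ combined with $|f(x,s)| \leq h(x)|s|$ gives
$$|F(x,u(x)) - F(x,v(x))| \leq h(x)\,\max\{|u(x)|,|v(x)|\}\,|u(x) - v(x)|.$$
Integrating and applying the Cauchy--Schwarz inequality together with $\|h\|_\infty < \infty$ yields
$$|\Psi_2(u) - \Psi_2(v)| \leq \|h\|_\infty\,(\|u\|_2 + \|v\|_2)\,\|u - v\|_2,$$
which establishes local Lipschitz continuity on $L^2(\mathbb{R}^N)$.

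Third, for the inclusion $\partial \Psi_2(u) \subseteq [\underline{f}(\cdot, u), \overline{f}(\cdot, u)]$, I would invoke the Aubin--Clarke theorem for integral functionals in the form developed in \cite{Chang1}. The decisive step is the estimate
$$\Psi_2^\circ(u; v) \leq \int_{\mathbb{R}^N} F^\circ(x, u(x); v(x))\, dx,$$
obtained by applying Fatou's lemma to the difference quotients $\delta^{-1}\bigl(F(x,u(x)+h(x)+\delta v(x)) - F(x,u(x)+h(x))\bigr)$, whose dominating integrability is provided precisely by $(f_1)$. Once this inequality is established, any $\xi \in \partial \Psi_2(u)$ satisfies $\xi(x) \in \partial F(x, u(x))$ for a.e. $x$, and Chang's characterization of the Clarke gradient of the primitive of a measurable function gives $\partial F(x,t) = [\underline{f}(x,t), \overline{f}(x,t)]$, completing the chain.

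The main obstacle is the third step: one must commute the Clarke subdifferential with the spatial integral, which requires a uniform dominating function for the difference quotients. Hypothesis $(f_1)$ supplies exactly such a bound through $h \in L^1 \cap L^\infty$, allowing Fatou's lemma to apply; without such a growth condition, one would need stronger integrability requirements on $f$ to close the argument.
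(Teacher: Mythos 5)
The paper gives no proof of this lemma at all; it simply cites \cite[Lemma 4.1]{AlvesJVJA2}, whose argument (going back to Chang \cite{Chang1}) is exactly the route you follow: measurability of $\underline{f},\overline{f}$, the elementary Lipschitz bound coming from $(f_1)$, and the Aubin--Clarke inequality $\Psi_2^\circ(u;v)\leq\int_{\mathbb{R}^N}F^\circ(x,u(x);v(x))\,dx$ obtained by a reverse-Fatou/domination argument. Your proof is correct and essentially the same as the one behind the citation; the only point worth tightening is the measurability step, where the clean statement is that $\underline{f}(x,\cdot)$ is lower semicontinuous and $\overline{f}(x,\cdot)$ is upper semicontinuous (by monotonicity of the essential inf/sup in $r$), which together with measurability in $x$ gives the claimed $N$-measurability without appealing to any Carath\'eodory property of $f$.
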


The inclusion in (\ref{Inclusion}) has the following meaning: for each $\eta\in \partial \Psi_2(u)$ there is a function $\tilde{\eta} \in L^2(\mathbb{R}^N)$ such that 
\begin{itemize}
\item[$i)$] $\eta(v)=\displaystyle{\int_{\mathbb{R}^N}}\tilde{\eta}v\,\,\,\, \forall v \in L^2(\mathbb{R}^N);$
\item[$ii)$] $\tilde{\eta}(x) \in [\underline{f}(x,u(x)),\overline{f}(x,u(x))]$ a.e. in $\mathbb{R}^N$.
\end{itemize}

\noindent Our next step is proving that the critical points of $I$ in the sense given in Definition \ref{cp} are solutions of $(P_1)$.

\begin{lemma}\label{CritSolution}
	Every critical point of the functional $I$ is a solution of $(P_1)$.
\end{lemma}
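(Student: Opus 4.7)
The plan is to verify Definition \ref{AWS} directly by extracting from the critical point inequality the needed pointwise PDE with the measurable selection $\rho$. Let $u$ be a critical point of $I$, so that $u \in D(I)=D(\Psi_1)$ and
$$\langle \Phi'(u), v-u\rangle + \Psi_1(v)-\Psi_1(u)+\Psi_2(v)-\Psi_2(u)\geq 0, \quad \forall v \in H^{1}(\mathbb{R}^{N}).$$
The first task is to show that $\|\varphi_1^u\|<\infty$, so that $\tilde I'(u)$ exists by Lemma \ref{Fam2}. The second task is to turn the above inequality into a pointwise inclusion $-\tilde I'(u) \in \partial \Psi_2(u)$, which via Lemma \ref{S3} supplies the selection $\rho$ required in Definition \ref{AWS}. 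The third task is simply to combine the explicit formula for $\tilde I'(u)$ given in Lemma \ref{Fam2} with the identity $F_1'(s)-F_2'(s)=-s\log s^{2}-s$ to recover equation (\ref{Solution}).

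For the first task, fix $\phi \in C_{0}^{\infty}(\mathbb{R}^{N})$. Using the at-most-linear growth of $F_1'$ and the fact that $\phi$ is bounded with compact support, a direct check gives $u+t\phi \in D(\Psi_1)$ for every $t \in \mathbb{R}$, so testing with $v=u+t\phi$, $t>0$, dividing by $t$ and letting $t\to 0^{+}$ is legitimate. Since $\Psi_2 \in \mathrm{Lip}_{loc}$, its contribution converges to $\Psi_2^{\circ}(u;\phi)$. For $\Psi_1$, convexity of $F_1$ makes the difference quotient $(F_1(u+t\phi)-F_1(u))/t$ monotone non-decreasing in $t$, bounded below by $F_1'(u)\phi$ and above by $(F_1(u+t_0\phi)-F_1(u))/t_0 \in L^{1}$; monotone (or dominated) convergence then yields
$$\langle \Phi'(u),\phi\rangle + \int_{\mathbb{R}^{N}} F_1'(u)\phi + \Psi_2^{\circ}(u;\phi) \geq 0.$$
Replacing $\phi$ by $-\phi$ and using the Lipschitz bound $|\Psi_2^{\circ}(u;\phi)|\leq C\|\phi\|$ together with the continuity of $\Phi'(u)$, the linear functional $\phi \mapsto \int F_1'(u)\phi$ is bounded on $C_{0}^{\infty}$ in the $H^{1}$-norm, i.e. $\|\varphi_1^u\|<\infty$.

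Lemma \ref{Fam2} now provides $\tilde I'(u)\in (H^{1}(\mathbb{R}^{N}))^{*}$ with $\tilde I'(u)(\phi)=\langle \Phi'(u),\phi\rangle + \int F_1'(u)\phi$ for every $\phi \in C_{0}^{\infty}$, and additionally $u^{2}\log u^{2}\in L^{1}(\mathbb{R}^{N})$ from the formula $\tilde I'(u)(u)=\int(|\nabla u|^{2}+u^{2})-\int u^{2}\log u^{2}$. The previous inequality rewrites as $-\tilde I'(u)(\phi)\leq \Psi_2^{\circ}(u;\phi)$ for every $\phi \in C_{0}^{\infty}$, and by density of $C_{0}^{\infty}$ in $H^{1}$ together with the upper semicontinuity of $\Psi_2^{\circ}(u;\cdot)$, for every $\phi \in H^{1}(\mathbb{R}^{N})$. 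By the very definition of the generalized gradient, $-\tilde I'(u)\in \partial \Psi_2(u)$. Lemma \ref{S3} then yields $\rho \in L^{2}(\mathbb{R}^{N})$ with $\rho(x)\in [\underline{f}(x,u(x)),\overline{f}(x,u(x))]$ a.e. and $\tilde I'(u)(\phi)=-\int \rho\phi$ for all $\phi \in C_{0}^{\infty}$. Computing the left-hand side with $\langle \Phi'(u),\phi\rangle = \int(\nabla u\nabla\phi+2u\phi)-\int F_2'(u)\phi$ and the identity for $F_1'-F_2'$ gives
$$\tilde I'(u)(\phi)=\int_{\mathbb{R}^N}\nabla u\nabla\phi+\int_{\mathbb{R}^N} u\phi-\int_{\mathbb{R}^N} u\log u^{2}\,\phi,$$
which rearranges precisely into (\ref{Solution}).

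The main obstacle is the first step: justifying the passage to the limit in the $\Psi_1$-quotient and extracting from the resulting one-sided variational inequality a two-sided bound that yields $\|\varphi_1^u\|<\infty$. The convexity of $F_1$ together with the growth control on $F_1'$ is exactly what is needed, but one has to be careful with the possibility that neither $F_1'(u)\phi$ nor $(F_1(u+t\phi)-F_1(u))/t$ need a priori be in $L^{1}$, which is why the sandwich argument through $t_0>0$ fixed is needed before letting $t\to 0^{+}$.
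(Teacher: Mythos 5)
Your proof is correct and follows essentially the same route as the paper: testing the critical-point inequality with $v=u+t\phi$, passing to the limit via convexity and Lemma \ref{G}, deducing $\|\varphi_1^u\|<\infty$ from the two-sided bound on $\Psi_2^\circ(u;\cdot)$, extending by density, concluding $-\Phi'(u)-\varphi_1^u\in\partial\Psi_2(u)$, and invoking Lemma \ref{S3} for the selection $\rho$. The only (harmless) variations are that you bound $\Psi_2^\circ(u;\cdot)$ by the local Lipschitz constant rather than by the explicit representation from $(f_1)$, and you explicitly note $u^2\log u^2\in L^1(\mathbb{R}^N)$ via Lemma \ref{Fam2}, a requirement of Definition \ref{AWS} that the paper's proof leaves implicit.
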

\begin{proof}
	Suppose that $u \in D(I)$ is a critical point of $I$, that is,
	\begin{equation}\label{CPI}
\begin{split}
\int_{\mathbb{R}^N} (\nabla u \nabla(v-u)+2u(v-u)) & +\int_{\mathbb{R}^N}(F(x,v)-F(x,u)) \\
& \geq \int_{\mathbb{R}^N}F'_2(u)(v-u) - \int_{\mathbb{R}^N}(F_1(v)-F_1(u)),
\end{split}
\end{equation}
for every $v \in H^1(\mathbb{R}^N).$
	\noindent The last sentence means that the functional $-\Phi'(u)$ belongs to $\partial \Psi(u)$. Hence, by choosing $v=u+t\phi$, $t>0$, $\phi \in C_0^{\infty}(\mathbb{R}^N)$, we find
	\begin{equation}
	\int_{\mathbb{R}^N}\frac{1}{t}(F(x,u+t\phi)-F(x,u))+\int_{\mathbb{R}^N}\frac{1}{t}(F_1(u+t\phi)-F_1(u))\geq \langle-\Phi'(u),\phi \rangle,
	\end{equation}
	which is equivalent to
	\begin{equation}
	\frac{1}{t}[\Psi_2(u+t\phi)-\Psi_2(u)]+\int_{\mathbb{R}^N}\frac{1}{t}(F_1(u+t\phi)-F_1(u))\geq \langle-\Phi'(u),\phi \rangle.
	\end{equation}
	As $\Psi_2$ is convex, when $t \rightarrow 0^+$, the Lemmas \ref{G} and \ref{Psi1} imply that
	\begin{equation}\label{In}
	\Psi_2^\circ(u,\phi)+\int_{\mathbb{R}^N}F_1'(u)\phi\geq \langle-\Phi'(u),\phi \rangle.
	\end{equation}
	
\noindent Replacing $\phi$ with $-\phi$ in (\ref{In}) and by using Lemma \ref{G} it follows that
	\begin{equation}
	\Psi_2^\circ(u,-\phi)-\langle\Phi'(u),\phi \rangle \geq \int_{\mathbb{R}^N}F_1'(u)\phi.
	\end{equation}
	Then, according to the notation introduced in (\ref{Fam}), one has
	\begin{equation}\label{Psi2}
	\Psi_2^\circ(u,-\phi)-\langle\Phi'(u),\phi \rangle\geq\langle \varphi_1^{u},  \phi \rangle  .
	\end{equation}
	
	\noindent Let us prove now the following claim that will be crucial in the rest of the proof.
	\begin{claim}\label{Sup}
		$\displaystyle{\sup_{\phi \in C_0^\infty(\mathbb{R}^N),\, \|\phi\|\leq 1}}\Psi_2^\circ(u,\phi)< \infty.$
	\end{claim}
	
	\noindent Indeed, by Lemma \ref{S3}, for each $\phi \in  C_0^\infty(\mathbb{R}^N)$ with $\|\phi\|\leq 1$, there is $\tilde{\eta}_\phi \in L^2(\mathbb{R}^N)$ such that  $\tilde{\eta}_\phi(x) \in [\underline{f}(x,u(x)),\overline{f}(x,u(x))]$ and
	$$
	\Psi_2^\circ(u,\phi)= \int_{\mathbb{R}^N}\tilde{\eta}_\phi\phi.
	$$
Now, by $(h_1)$ and $(f_1)$, there exists a constant $C:=C(u,h)>0$, independent of $\phi$, such that
	$$\begin{aligned}
	\left|\int_{\mathbb{R}^N}\tilde{\eta}_\phi\phi\right|\leq C\|\phi\|.
	\end{aligned}
	$$
The Claim \ref{Sup} immediately follows.
	\noindent Now, we observe that Claim \ref{Sup} together with inequality (\ref{Psi2}) give
		$$
	\sup_{\phi \in C_0^\infty(\mathbb{R}^N),\, \|\phi\|\leq 1}\langle\varphi_1^{u},\phi\rangle < \infty.
	$$
	Consequently, the Hahn-Banach's extension theorem ensures that the functional $\varphi_1$ admits an extension, still denoted by $\varphi_1$, to a continuous linear functional on $H^1(\mathbb{R}^N)$. Moreover, Lemma \ref{usc}, inequality (\ref{In}) and the density of $C_0^\infty(\mathbb{R}^N)$ in $H^1(\mathbb{R}^N)$ yield
	\begin{equation}
	\langle-\Phi'(u)-\varphi_1^{u}, v\rangle \leq \Psi_2^\circ(u,v)\,\,\,\, \forall v \in H^1(\mathbb{R}^N),
	\end{equation}
	that is,
	\begin{equation}
	-\Phi'(u) -\varphi_1^{u} \in \partial \Psi_2 (u).
	\end{equation}
	Thus, there exists $\varphi_2 \in \partial \Psi_2 (u)$ such that $-\Phi'(u) -\varphi_1^{u} = \varphi_2$. Now, by Lemma \ref{S3}, there exists $\rho \in L^2(\mathbb{R}^N)$  such that $\rho(x) \in [\underline{f}(x,u(x)),\overline{f}(x,u(x))]$ a.e. in $\mathbb{R}^N$ and
	$$
	\langle \varphi_2, v \rangle=\int_{\mathbb{R}^N}\rho v,\,\,\, \forall v \in H^1(\mathbb{R}^N).
	$$
	Hence	
	$$\langle -\Phi'(u),v\rangle = \langle \varphi_1^{u},v\rangle+\int_{\mathbb{R}^N}\rho v\,\,\, \forall v \in H^1(\mathbb{R}^N).$$
	
	\noindent Taking $v=\phi \in C_0^\infty(\mathbb{R}^N)$ in the equation above, one has
	\begin{equation}
	\int_{\mathbb{R}^N}\rho \phi+\int_{\mathbb{R}^N}F'_1(u) \phi=\langle-\Phi'(u),\phi\rangle\, \,\,\, \forall \phi \in C_0^\infty(\mathbb{R}^N),
	\end{equation}
which finishes the proof. 		
\end{proof}

On account of Lemma \ref{CritSolution} problem $(P_1)$ has infinitely many solutions, which is an immediate consequence of following result.

\begin{theorem}\label{Theorem}
	The functional $I$ has a sequence of critical points $(u_n)$ such that $I(u_n) \rightarrow \infty$ as $n\rightarrow \infty$. Hence, problem $(P_1)$ has infinitely many nontrivial solutions.
\end{theorem}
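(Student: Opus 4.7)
The strategy is to apply the Fountain Theorem for lower semicontinuous functionals (Theorem \ref{Fountain1}) to the restriction $J := I|_E$, where $E := H^1_{rad}(\mathbb{R}^N)$, and then to lift the critical points of $J$ to critical points of $I$ on the whole of $H^1(\mathbb{R}^N)$ via the Principle of Symmetric Criticality (Theorem \ref{PC}) applied to the group $O(N)$ acting isometrically on $H^1(\mathbb{R}^N)$. The hypothesis $(f_2)$ ensures that $\Phi$ and $\Psi$ are $O(N)$-invariant, so $\mathrm{Fix}(O(N))=E$. Combining this with Lemma \ref{CritSolution} will finish the proof.

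First I will prepare the framework for Theorem \ref{Fountain1}. Choose $G=\mathbb{Z}_2=\{Id,-Id\}$ acting antipodally on $E$, which is isometric, admissible, and (being finite) makes every convex l.s.c.\ functional $G$-compatible by Example 3.2(2). Pick a Hilbert basis $(e_j)_{j\in\N}$ of $E$, set $X_j:=\R e_j$, so $(G_0)$ holds with $Y=\R$. The evenness of $F_1,F_2$ and the oddness assumption $(f_2)$ on $f$ give $\Phi(-u)=\Phi(u)$ and $\Psi(-u)=\Psi(u)$, hence both are $G$-invariant.

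Next I will verify the $(\mathrm{PS})$ condition for $J$. Let $(u_n)\subset E$ be a $(\mathrm{PS})_c$ sequence. Using the test function $v=u_n$ in the subdifferential inequality together with Lemma \ref{Fam2} (adapted to the radial setting) and the estimate coming from $(f_3)$, I get
\[
c+o_n(1)+o_n(1)\|u_n\|\;\geq\; I(u_n)-\tfrac12\tilde I'(u_n)(u_n)-\tfrac12\int_{\R^N}\eta_{u_n}u_n\;\geq\;\tfrac12\|u_n\|_2^2 - C\|h\|_1,
\]
which bounds $\|u_n\|_2$. Testing with $v=0$ and $v=2u_n$ separately and exploiting the logarithmic Sobolev inequality as in \cite{Alves-de Morais,Ji-Szulkin} then yields boundedness of $\|\nabla u_n\|_2$ and of $\int u_n^2|\log u_n^2|$; thus $(u_n)$ is bounded in $E$. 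The compact embedding $H^1_{rad}(\R^N)\hookrightarrow L^p(\R^N)$ for $p\in(2,2^*)$, $(A_2)$, and the Brezis–Lieb type argument on the logarithmic term allow me to upgrade weak convergence to strong convergence in $E$.

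Then I check the geometric conditions of Theorem \ref{Fountain1}. For (i), on the finite dimensional space $Y_k$ all norms are equivalent, the logarithmic term $-\int L(u)=\frac12\int u^2-\frac12\int u^2\log u^2$ dominates and drives $I(u)\to-\infty$ as $\|u\|\to\infty$ (the contributions of $F\ge 0$ and $F_2$ are controlled by $(f_1)$ and $(A_2)$). Hence a large enough $\rho_k$ yields $a_k\le 0$. For (ii), I set
\[
\beta_k:=\sup_{u\in Z_k,\,\|u\|=1}\|u\|_p,\qquad p\in(2,2^*),
\]
which satisfies $\beta_k\to 0$ by a classical argument (see e.g.\ \cite[Lemma 3.8]{Willem}). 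Choosing $r_k$ as a suitable power of $1/\beta_k$, the embeddings and $(A_2)$ give
\[
J(u)\;\ge\;\tfrac12\|u\|^2 - C\beta_k^p\,\|u\|^p - C'\beta_k^2\,\|u\|^2\|\log(\beta_k\|u\|)\|,
\]
and optimizing over $\|u\|=r_k$ yields $b_k\to\infty$.

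Finally, Theorem \ref{Fountain1} produces a sequence $(u_k)\subset E$ of critical points of $J$ with $J(u_k)=c_k\to\infty$. By Theorem \ref{PC} applied to the $O(N)$-action, each $u_k$ is a critical point of $I$ on $H^1(\R^N)$, hence a solution of $(P_1)$ by Lemma \ref{CritSolution}. Since $c_k\to\infty$, infinitely many of these solutions are distinct and nontrivial. The main obstacle in this plan is establishing the $(\mathrm{PS})$ condition: handling simultaneously the multivalued term $\partial F(x,u)$, the logarithmic nonlinearity, and the fact that the decomposition $I=\Phi+\Psi$ is not $C^1$ requires the careful combined use of $(f_3)$, the subdifferential calculus of Lemma \ref{Fam2}, and compactness in the radial setting.
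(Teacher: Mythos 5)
Your proposal follows essentially the same route as the paper: reduction to $J=I|_{H^1_{rad}(\mathbb{R}^N)}$ via the $O(N)$-action and Theorem \ref{PC}, verification of the $(\rm PS)$ condition through the $L^2$-estimate coming from $(f_3)$ combined with the logarithmic Sobolev inequality and the compact radial embedding, the fountain geometry on $Y_k$ and $Z_k$ with $\beta_k\to 0$, and then Theorem \ref{Fountain1} with $G=\mathbb{Z}_2$ followed by Lemma \ref{CritSolution}. The one item you omit is the verification that the minimax levels $c_k$ are finite, which is an explicit hypothesis of Theorem \ref{Fountain1} and is not automatic here since $I$ may take the value $+\infty$; the paper checks it by testing with $\tilde\gamma=Id|_{B_k}$ and the elementary bound $|t^2\log t^2|\leq C(|t|+|t|^p)$ on the finite-dimensional space $Y_k$, and your argument should include this step.
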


	The proof of Theorem \ref{Theorem} is divided into several lemmas. To this goal, let $O(N)$ be the orthogonal group in $\mathbb{R}^N$. So, by using a standard change of variable, it is easy to check that the functional $I$ is $O(N)$-invariant. Moreover, the space of invariant elements of $H^1(\mathbb{R}^N)$ under the natural action of $O(N)$ is the subspace $H_{rad}^1(\mathbb{R}^N)$ of radial functions of $ H^1(\mathbb{R}^N)$. The Palais' Principle given in Theorem \ref{PC} ensures that the critical points of $J:=I|_{H_{rad}^1(\mathbb{R}^N)}$ are also critical points of the functional $I$. We also notice that the functional $J$ is a $\mathbb{Z}_2$-invariant since $J$ is an even functional. 
Hence, Theorem \ref{Theorem} will be proved by using Theorem \ref{Fountain1} (with $G=\mathbb{Z}_2$) and the action described in Example \ref{ex1}. A key ingredient in the  proof is the Sobolev compact embedding
	\begin{equation}\label{Embedding}
	H_{rad}^1(\mathbb{R}^N)\hookrightarrow L^p(\mathbb{R}^N), \quad \forall p\in(2,2^*).
	\end{equation}
See \cite[Corollary 1.26]{Willem} for additional comments and remarks about the embedding above.\par
	
	Let us prove the following preliminary result.
	
	\begin{lemma}\label{Sub}
		Let $(u_n)$ be a $(\rm PS)$ sequence for the functional $J$ at a level $c$ and let $\varphi_1^{(n)}:=\varphi_1^{u_n}$ as in \eqref{Fam}. Then, $\|\varphi_1^{(n)}\|<\infty$ for any $n \in \mathbb{N}$ and there is a unique $w_n \in \partial J(u_n),$ which will be denoted by $J'(u_n),$ such that:
\begin{itemize}
		\item[$i)$] For some $\varphi_2^{(n)} \in \partial \Psi_2(u_n)$ one has
		$$J'(u_n)(v)= \langle \varphi_2^{(n)}, v \rangle+\langle\varphi_1^{(n)},v\rangle + \langle \Phi'(u_n), v\rangle ,\,\,\,\,\, \forall v \in H_{rad}^1(\mathbb{R}^N). $$
		
		\item[$ii)$] $J'(u_n)u_n=o_n(1)\|u_n\|$ with
		$$
		J'(u_n)(u_n) \leq \Psi_2^\circ(u_n,u_n) + \int_{\mathbb{R}^N}F_1'(u_n)u_n + \langle\Phi'(u_n), u_n \rangle ,\,\,\,\, \forall n \in \mathbb{N}.
		$$
\end{itemize}
	\end{lemma}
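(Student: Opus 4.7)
The plan mimics the approach used in the proof of Lemma \ref{CritSolution}. Starting from the Palais--Smale inequality
\[
\langle \Phi'(u_n), v - u_n\rangle + \Psi(v) - \Psi(u_n) \geq \langle w_n, v - u_n\rangle, \quad \forall v \in H^1_{rad}(\mathbb{R}^N),
\]
where $\|w_n\|_* \to 0$, I take $v := u_n + t\phi$ with $\phi \in C^\infty_{0,rad}(\mathbb{R}^N)$ and $t > 0$. Dividing by $t$ and letting $t \to 0^+$---using Lemma \ref{Psi1}~$ii)$ to compute the directional derivative of $\Psi_1$ on a bounded domain containing $\mathrm{supp}(\phi)$, and Lemma \ref{G} to identify the directional derivative of the convex Lipschitz $\Psi_2$ with $\Psi_2^\circ(u_n,\phi)$---yields
\[
\langle \Phi'(u_n) - w_n, \phi\rangle + \langle \varphi_1^{(n)}, \phi\rangle + \Psi_2^\circ(u_n, \phi) \geq 0,
\]
where $\langle \varphi_1^{(n)},\phi\rangle=\int F_1'(u_n)\phi$. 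Replacing $\phi$ with $-\phi$ and combining then yields two-sided control of $\langle \varphi_1^{(n)}, \phi\rangle$.

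The critical step is establishing a uniform bound $|\Psi_2^\circ(u_n, \pm\phi)| \leq C(u_n)\|\phi\|$ for $\phi \in C^\infty_{0,rad}$. By Lemma \ref{S3}, there exists $\tilde{\eta}_\phi \in L^2(\mathbb{R}^N)$ with $\tilde{\eta}_\phi(x) \in [\underline{f}(x, u_n(x)), \overline{f}(x, u_n(x))]$ and $\Psi_2^\circ(u_n, \pm\phi) = \pm\int \tilde{\eta}_\phi\, \phi$; assumption $(f_1)$ gives $|\tilde{\eta}_\phi(x)| \leq h(x)|u_n(x)|$, and with $h \in L^\infty(\mathbb{R}^N)$ one obtains $|\Psi_2^\circ(u_n, \pm\phi)| \leq \|h\|_\infty \|u_n\|_2\|\phi\|_2$. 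Combined with $|\langle w_n - \Phi'(u_n), \phi\rangle| \leq (\|w_n\|_* + \|\Phi'(u_n)\|_*)\|\phi\|$, this yields $\|\varphi_1^{(n)}\| < \infty$. By Hahn--Banach, $\varphi_1^{(n)}$ extends to a continuous linear functional on $H^1_{rad}(\mathbb{R}^N)$, and the radial analogue of Lemma \ref{Fam2} supplies a unique $\tilde{J}'(u_n) \in \partial \tilde{J}(u_n)$ with $\tilde{J}'(u_n)(v) = \langle \Phi'(u_n), v\rangle + \langle \varphi_1^{(n)}, v\rangle$; in particular, $F_1'(u_n) u_n \in L^1(\mathbb{R}^N)$ and $\langle \varphi_1^{(n)}, u_n\rangle = \int F_1'(u_n) u_n$.

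Setting $\varphi_2^{(n)} := w_n - \Phi'(u_n) - \varphi_1^{(n)}$, the displayed inequality rewrites as $\langle \varphi_2^{(n)}, \phi\rangle \leq \Psi_2^\circ(u_n, \phi)$ for every $\phi \in C^\infty_{0,rad}$. By the density of $C^\infty_{0,rad}$ in $H^1_{rad}$ and the upper semicontinuity of $v \mapsto \Psi_2^\circ(u_n, v)$ (Lemma \ref{usc}), this extends to all $v \in H^1_{rad}$, whence $\varphi_2^{(n)} \in \partial \Psi_2(u_n)$. Defining $J'(u_n) := w_n$ then delivers the decomposition $J'(u_n) = \varphi_2^{(n)} + \varphi_1^{(n)} + \Phi'(u_n)$ of part~$i)$, with $J'(u_n) \in \partial J(u_n)$ since $J'(u_n)$ is precisely the Palais--Smale witness.

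For part~$ii)$, the bound $|J'(u_n)(u_n)| = |\langle w_n, u_n\rangle| \leq \|w_n\|_* \|u_n\| = o_n(1)\|u_n\|$ is immediate from $\|w_n\|_*\to 0$, while $\langle \varphi_2^{(n)}, u_n\rangle \leq \Psi_2^\circ(u_n, u_n)$ (from $\varphi_2^{(n)} \in \partial \Psi_2(u_n)$) and $\langle \varphi_1^{(n)}, u_n\rangle = \int F_1'(u_n) u_n$ (Lemma \ref{Fam2}) produce the claimed upper bound. The main obstacle is the rigorous limit passage for the $\Psi_1$-term together with the uniform bound on $\Psi_2^\circ(u_n, \pm \phi)$; both lean essentially on the structural growth $(f_1)$ and on the local $C^1$-regularity afforded by Lemma \ref{Psi1}~$ii)$.
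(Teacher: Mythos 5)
Your argument is correct and follows essentially the same route as the paper's proof: the same limit passage with $v=u_n+t\phi$, the same uniform bound on $\Psi_2^\circ(u_n,\pm\phi)$ via Lemma \ref{S3} and $(f_1)$, the Hahn--Banach extension of $\varphi_1^{(n)}$, the identification of $\varphi_2^{(n)}=w_n-\Phi'(u_n)-\varphi_1^{(n)}$ as an element of $\partial\Psi_2(u_n)$, and the choice $J'(u_n):=w_n$. The only (harmless) deviation is in part $ii)$, where you deduce the inequality from the decomposition of part $i)$ together with $\langle\varphi_2^{(n)},u_n\rangle\le\Psi_2^\circ(u_n,u_n)$ and Lemma \ref{Fam2}, whereas the paper re-inserts $v=u_n+tu_n$ into the Palais--Smale inequality and passes to the limit using the monotonicity of the convex difference quotients of $F_1$; both are valid.
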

	
	\begin{proof}
		Let $(u_n)$ be a $(\rm PS)$ for the functional $J$. Then
		\begin{equation}\label{PS2}
		\Psi_2(v)-\Psi_2(u_n) + \int_{\mathbb{R}^N}(F_1(v)-F_1(u_n))\geq\langle-\Phi'(u_n),v-u_n\rangle +\langle w_n,v-u_n \rangle \,\,\,\, v \in H_{rad}^1(\mathbb{R}^N),
		\end{equation}
		with $w_n \in (H_{rad}^1(\mathbb{R}^N))'$, and $w_n \rightarrow 0$. Set $\phi \in C_{0,\,rad}^\infty(\mathbb{R}^N)$, and take $v:=u_n+t\phi$, with $t>0$. By Lemma \ref{G} it follows that
		\begin{equation}\label{Psi3}
		\Psi_2^\circ(u_n,\phi)+\int_{\mathbb{R}^N}F_1'(u_n)\phi \geq \langle -\Phi'(u_n), \phi \rangle + \langle w_n, \phi \rangle\,\,\,\,\, \forall \phi \in C_{0,\,rad}^\infty(\mathbb{R}^N),
		\end{equation}
		as $t\rightarrow 0^+$.
		Since
		$$ \langle \varphi_1^{(n)}, \phi \rangle = \int_{\mathbb{R}^N}F_1'(u_n)\phi\,\,\,\, \phi \in C_{0,\,rad}^\infty(\mathbb{R}^N),
		$$
		arguing as in the proof of Lemma \ref{CritSolution}, one has
		\begin{equation}\label{SupFam}
		\sup_{\phi \in C_{0,\,rad}^\infty(\mathbb{R}^N),\, \|\phi\|\leq 1}\langle \varphi_1^{(n)}\, \phi \rangle<\infty.
		\end{equation}
		Therefore, the functional $\varphi_1^n$ can be extended to the whole $H^1_{rad}(\mathbb{R}^N)$. By using (\ref{Psi3}), again as in Lemma \ref{CritSolution}, we get
		\begin{equation}
		-\Phi'(u_n)-\varphi_1^{(n)}+w_n \in \partial \Psi_2(u_n).
		\end{equation}
		Consequently, by setting $J'(u_n):=w_n$, one has
		\begin{equation}
		J'(u_n)=\varphi_2^{(n)}+\varphi_1^{(n)}+\Phi'(u_n),
		\end{equation}
		for some $\varphi_2^{(n)} \in \partial \Psi_2(u_n),$ so that $i)$ has been proved. In order to prove $ii)$, let us observe that
		$$J'(u_n)(u_n)=\langle w_n, u_n \rangle =o_n(1)\|u_n\|,$$
as $J'(u_n) \rightarrow 0$. Hence, by choosing $v:=u_n+tu_n$ in (\ref{PS2}), we have
		\begin{equation}\label{PS2a}
		J'(u_n)(u_n) \leq \frac{1}{t}[\Psi_2(u_n+tu_n)-\Psi_2(u_n)]+\int_{\mathbb{R}^N}\frac{1}{t}[F_1(u_n+tu_n)-F_1(u_n)] + \langle\Phi'(u_n),u_n\rangle.
		\end{equation}
		Since $F_1$ is convex, the map
		$$ t \longmapsto \frac{F_1(u_n+tu_n)-F_1(u_n)}{t},\,\,\, t>0 $$
		is monotone and
		$$ \frac{F_1(u_n+tu_n)-F_1(u_n)}{t}\rightarrow F_1'(u_n)u_n,$$
		as $t\rightarrow 0^+$. Now, Lemma \ref{Fam2} and (\ref{SupFam}) yields $F_1'(u_n)u_n \in L^1(\mathbb{R}^N)$ and  
		$$ \int_{\mathbb{R}^N} \frac{F_1(u_n+tu_n)-F_1(u_n)}{t} \rightarrow \int_{\mathbb{R}^N}F_1'(u_n)u_n,$$
by using the classical Lebesgue's Dominated Convergence Theorem.
		In conclusion, as $t \rightarrow 0$ in (\ref{PS2a}), by Lemma \ref{G}, it follows that
		$$ J'(u_n)(u_n) \leq \Psi_2^\circ(u_n,u_n) + \int_{\mathbb{R}^N}F_1'(u_n)u_n + \langle\Phi'(u_n), u_n \rangle.$$		
		This completes the proof.	
	\end{proof}

\noindent A consequence of Lemma \ref{Sub} is the following result that will be useful in order to prove that any $(\rm PS)$ sequence for the functional $J$ is bounded; see Lemma \ref{PSCB}.
	
	\begin{lemma}\label{L2Est}
		Let $(u_n)$ be a $(\rm PS)$ sequence for the functional $J$ at level $c$. Then
		\begin{equation}\label{L2estim}
		\int_{\mathbb{R}^N}|u_n|^2 \leq M+o_n(1)\|u_n\|,\,\,\, n\geq n_0
		\end{equation}
		for some $M>0$ and $n_0 \in \mathbb{N}$.
	\end{lemma}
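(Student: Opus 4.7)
\textbf{Proof plan for Lemma \ref{L2Est}.} The plan is to extract the $L^2$-norm of $u_n$ from the combination $J(u_n)-\frac{1}{2}J'(u_n)(u_n)$ by exploiting the identity provided by Lemma~\ref{Fam2} (in its radial version) together with the subquadratic coercivity built into condition $(f_3)$. Since $J=\tilde{J}+\Psi_2$ on $H^1_{rad}(\R^N)$, where $\tilde J:=\tilde I|_{H^1_{rad}}$, the Lemma~\ref{Sub} decomposition $J'(u_n)=\varphi_2^{(n)}+\varphi_1^{(n)}+\Phi'(u_n)$ identifies the part $\varphi_1^{(n)}+\Phi'(u_n)$ as the unique element $\tilde J'(u_n)\in\partial\tilde J(u_n)$ produced by Lemma~\ref{Fam2}, and consequently
\[
\tilde J'(u_n)(u_n)=J'(u_n)(u_n)-\langle\varphi_2^{(n)},u_n\rangle.
\]

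Using the identity $\tilde{J}(u)-\frac{1}{2}\tilde{J}'(u)(u)=\frac{1}{2}\int_{\R^N}|u|^2$ from Lemma~\ref{Fam2} (valid in the radial setting by the remark following that lemma), and writing $\tilde J(u_n)=J(u_n)-\int_{\R^N}F(x,u_n)$, I would obtain
\[
\tfrac{1}{2}\int_{\R^N}|u_n|^2 = J(u_n)-\tfrac{1}{2}J'(u_n)(u_n) \;+\; \Big[\tfrac{1}{2}\langle\varphi_2^{(n)},u_n\rangle-\int_{\R^N}F(x,u_n)\Big].
\]
The third bracket is exactly the quantity that is controlled by hypothesis $(f_3)$: indeed, by Lemma~\ref{S3} there exists $\tilde\eta^{(n)}\in L^2(\R^N)$ representing $\varphi_2^{(n)}$, with $\tilde\eta^{(n)}(x)\in\partial F(x,u_n(x))$ a.e., so applying $(f_3)$ pointwise (with $\eta_t=\tilde\eta^{(n)}(x)$ and $t=u_n(x)$) and integrating yields
\[
\int_{\R^N}F(x,u_n)-\tfrac{1}{2}\langle\varphi_2^{(n)},u_n\rangle \;\geq\; -C\|h\|_1,
\]
i.e., the bracketed term is bounded above by $C\|h\|_1$.

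It then remains to bound $J(u_n)-\tfrac{1}{2}J'(u_n)(u_n)$. Since $(u_n)$ is a (PS) sequence at level $c$, Lemma~\ref{Sub}(ii) yields $|J'(u_n)(u_n)|=o_n(1)\|u_n\|$, while $J(u_n)\to c$ gives $J(u_n)\leq |c|+1$ for $n\geq n_0$. Putting everything together,
\[
\tfrac{1}{2}\int_{\R^N}|u_n|^2 \;\leq\; |c|+1+\tfrac{1}{2}o_n(1)\|u_n\|+C\|h\|_1,
\]
so the conclusion follows with $M:=2(|c|+1+C\|h\|_1)$. The main conceptual obstacle is the bookkeeping in step 1 — recognizing that the unique differential $\tilde J'(u_n)$ guaranteed by Lemma~\ref{Fam2} agrees with the $\varphi_1^{(n)}+\Phi'(u_n)$ part of the Szulkin decomposition of $J'(u_n)$, so that subtracting off $\langle\varphi_2^{(n)},u_n\rangle$ is legitimate and connects cleanly to $(f_3)$ through the measurable representative supplied by Lemma~\ref{S3}.
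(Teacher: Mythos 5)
Your proposal is correct and follows essentially the same route as the paper: decompose $J=\tilde J+\Psi_2$, use the identity $\tilde J(u)-\tfrac12\tilde J'(u)(u)=\tfrac12\int_{\mathbb{R}^N}|u|^2$ from Lemma \ref{Fam2} together with Lemma \ref{Sub}, and control the remaining $\Psi_2$-term through the $L^2$ representative from Lemma \ref{S3} and hypothesis $(f_3)$. The only (immaterial) difference is that you work with the specific subgradient $\varphi_2^{(n)}$ and an exact identity, whereas the paper phrases the same step as an inequality via $\Psi_2^\circ(u_n,u_n)$.
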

	\begin{proof}
		Since $J(u_n) \rightarrow c$, there is $n_0 \in \mathbb{N}$ such that
		\begin{equation}\label{L1}
		J(u_n) \leq c+1,\,\,\,\, n\geq n_0.
		\end{equation}
		By setting $\tilde{J} = \tilde{I}|_{H_{rad}^1(\mathbb{R}^N)},$ i.e.,
		$$
		\tilde{J}(u)= \frac{1}{2}\|u\|^2 +\int_{\mathbb{R}^N}F_1(u)-\int_{\mathbb{R}^N}F_2(u)\,\,\,\, u \in H_{rad}^1(\mathbb{R}^N),
		$$
		we can write $J = \tilde{J}+\Psi_2|_{H_{rad}^1(\mathbb{R}^N)}$. By Lemmas \ref{Fam2} and \ref{Sub} Part - $ii)$, one has
		$$
		J'(u_n)(u_n)\leq \tilde{J}'(u_n)(u_n) + \Psi_2^\circ(u_n,u_n)
		$$
		as well as
		\begin{equation}\label{L2}
		J(u_n)-\frac{1}{2}J'(u_n)(u_n) \geq \frac{1}{2}\int_{\mathbb{R}^N}|u_n|^2+\left(\Psi_2(u_n)-\frac{1}{2}\Psi_2^\circ(u_n,u_n)\right).
		\end{equation}
	Now, gathering $J'(u_n)u_n=o_n(1)\|u_n\|$ with (\ref{L1}) and (\ref{L2}), we get
		$$
		c+1+o_n(1)\|u_n\| \geq \frac{1}{2}\int_{\mathbb{R}^N}|u_n|^2+\left(\Psi_2(u_n)-\frac{1}{2}\Psi_2^\circ(u_n,u_n)\right), \quad \forall n \geq n_0.
		$$
		In order to  finish the proof, it is enough to show that there is $M>0$ (independent of $n$) such that
		\begin{equation}\label{Ineq}
		\left(\Psi_2(u_n)-\frac{1}{2}\Psi_2^\circ(u_n,u_n)\right) \geq -M, \quad \forall n \in\mathbb{N}.
		\end{equation}
		Having this in mind, we employ the Lemma \ref{S3} to obtain
		$$
		\begin{aligned}
		\Psi_2(u_n)-\frac{1}{2}\Psi_2^\circ(u_n,u_n)= \int_{\mathbb{R}^N}F(x,u_n)-\frac{1}{2}\int_{\mathbb{R}^N}\eta^{(n)}u_n,
		\end{aligned}
		$$
		where $\eta^{(n)} \in L^2(\mathbb{R}^N)$ and $\eta^{(n)}(x) \in [\underline{f}(x,u_n(x)),\overline{f}(x,u_n(x))]$ a.e. in $\mathbb{R}^N$. The condition $(f_3)$ yields
		$$
		\begin{aligned}
		\int_{\mathbb{R}^N}F(x,u_n)-\frac{1}{2}\int_{\mathbb{R}^N}\eta^{(n)}u_n\geq -C\int_{\mathbb{R}^N}h(x) \geq - M,
		\end{aligned}
		$$
		for some $M=M_h>0$. This completes the proof.
	\end{proof}	
	
	\noindent Let us recall now the so-called logarithmic Sobolev inequality found in \cite[p. 144]{Alves-de Morais}, as well as \cite[Sentence (2.4)]{Ji-Szulkin} and the references therein:\par
\indent For each $b>0$,
	\begin{equation} \label{LogIneq}
	\int_{\mathbb{R}^N}u^2\log u^2 \leq \frac{b^2}{\pi}\|\nabla u\|_2^2+(\log \|u\|_2^2-N(1+\log b))\|u\|_2^2\,\,\,\, 
	\end{equation}
for every $u \in H^1(\mathbb{R}^N)$.\par

An immediate consequence of \eqref{LogIneq} is given below.
	
	\begin{corollary}\label{LogIneq1}
		There is $C>0$ such that
		$$
		\int_{\mathbb{R}^N}u^2\log u^2 \leq \frac{1}{2}\| \nabla u\|_2^2 + C(\log \|u\|_2^2)+1)\|u\|_2^2\,\,\,\, \forall u \in H^1(\mathbb{R}^N).
		$$
	\end{corollary}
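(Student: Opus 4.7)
The plan is to obtain the corollary as a direct specialization of the logarithmic Sobolev inequality \eqref{LogIneq}, which holds for every $b>0$. The only real choice to make is the value of $b$: I would pick it so that the coefficient in front of $\|\nabla u\|_2^2$ becomes exactly $1/2$. Solving $b^2/\pi = 1/2$ yields $b_0 := \sqrt{\pi/2}$.

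With this choice, \eqref{LogIneq} immediately gives
\begin{equation*}
\int_{\mathbb{R}^N} u^2 \log u^2 \,\leq\, \frac{1}{2}\|\nabla u\|_2^2 \,+\, \bigl(\log \|u\|_2^2 - N(1+\log b_0)\bigr)\|u\|_2^2,
\end{equation*}
for every $u \in H^1(\mathbb{R}^N)$. It remains only to absorb the constant $-N(1+\log b_0)$ into an expression of the form $C(\log\|u\|_2^2+1)$.

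The key observation is that $b_0 = \sqrt{\pi/2} > 1$, so $\log b_0 > 0$ and hence $-N(1+\log b_0) < 0$. Setting $C := \max\{1,\,|N(1+\log b_0)|\}$ (in fact $C=1$ is already enough here), we have
\begin{equation*}
\log\|u\|_2^2 - N(1+\log b_0) \,\leq\, \log\|u\|_2^2 + 1 \,\leq\, C\bigl(\log\|u\|_2^2 + 1\bigr),
\end{equation*}
valid regardless of the sign of $\log\|u\|_2^2$ (on the left one subtracts a nonnegative quantity, while on the right $C\geq 1$ ensures the $\log\|u\|_2^2$ piece is also not decreased). Multiplying through by $\|u\|_2^2\geq 0$ and inserting into the preceding inequality yields exactly the claim.

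There is no real obstacle: the statement is a quantitative rewriting of \eqref{LogIneq} with a convenient, $u$-independent choice of $b$. The only subtlety worth flagging in the write-up is to check that the absorption inequality above holds for \emph{both} signs of $\log\|u\|_2^2$, since when $\|u\|_2$ is small this quantity is negative and one must not inadvertently flip an inequality.
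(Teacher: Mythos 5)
Your proof is correct and follows exactly the route the paper intends (the paper offers no written proof, treating the corollary as an immediate consequence of \eqref{LogIneq} with the fixed choice $b=\sqrt{\pi/2}$, so that $b^2/\pi=1/2$ and $-N(1+\log b)<0\leq 1$). One small caveat: your parenthetical justification of $\log\|u\|_2^2+1\leq C(\log\|u\|_2^2+1)$ for $C>1$ is wrong when $\log\|u\|_2^2+1<0$ (multiplying a negative number by $C>1$ decreases it), but this is harmless since, as you note, $C=1$ already suffices and then that step is an equality.
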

	
	The following results involve $(\rm PS)$ sequences and will be used later on.

	\begin{lemma}\label{PSCB}
		If $(u_n)$ is a $(\rm PS)$ sequence for the functional $J$ at level $c \in \mathbb{R},$ then $(u_n)$ is bounded.
	\end{lemma}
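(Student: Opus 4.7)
The plan is to bootstrap from the Palais--Smale energy bound using the decomposition $F_2(t)-F_1(t)=\tfrac{1}{2}t^2\log t^2$, the logarithmic Sobolev inequality in Corollary \ref{LogIneq1}, and the $L^2$-estimate of Lemma \ref{L2Est}. First I would observe that for $n$ large enough, $J(u_n)\leq c+1$. Writing
\[
J(u_n)=\frac{1}{2}\|u_n\|^2+\int_{\mathbb{R}^N}F(x,u_n)+\int_{\mathbb{R}^N}F_1(u_n)-\int_{\mathbb{R}^N}F_2(u_n)
\]
and using that $\int F_2(u_n)-\int F_1(u_n)=\tfrac{1}{2}\int u_n^2\log u_n^2$, together with $F(x,\cdot)\geq 0$ and $F_1\geq 0$ from $(A_1)$, I obtain
\[
\frac{1}{2}\|u_n\|^2 \leq c+1+\frac{1}{2}\int_{\mathbb{R}^N}u_n^2\log u_n^2.
\]

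Next, I would apply Corollary \ref{LogIneq1} to bound the right-hand side, and use the definition $\|u\|^2=\|\nabla u\|_2^2+2\|u\|_2^2$ to split the left-hand side. Setting $a_n:=\|\nabla u_n\|_2^2$ and $b_n:=\|u_n\|_2^2$, this gives, after rearrangement,
\[
\frac{1}{4}a_n+b_n\Bigl[1-\frac{C}{2}(\log b_n+1)\Bigr]\leq c+1.
\]
This is the central estimate. From Lemma \ref{L2Est} I also have the side information $b_n\leq M+o_n(1)\|u_n\|$.

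Now I would argue by contradiction, assuming $\|u_n\|\to\infty$ and distinguishing two cases for the behavior of $b_n$. If $(b_n)$ remains bounded, then the bracket in the displayed inequality is bounded below, forcing $a_n$ to be bounded, and hence $\|u_n\|^2=a_n+2b_n$ is bounded---contradiction. If $b_n\to\infty$, the displayed inequality gives $a_n\leq C_1+C_2\,b_n\log b_n$; meanwhile Lemma \ref{L2Est} written as $b_n\leq M+\varepsilon_n\|u_n\|$ with $\varepsilon_n\to 0^+$ yields $\|u_n\|\geq b_n/(2\varepsilon_n)$ for $n$ large, whence $a_n\geq b_n^2/(8\varepsilon_n^2)-2b_n$. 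Combining both bounds on $a_n$ produces
\[
\frac{b_n^2}{8\varepsilon_n^2}\leq C_1+C_2\,b_n\log b_n+2b_n,
\]
which divided by $b_n$ gives $b_n\leq 16C_2\,\varepsilon_n^2\log b_n$ for $n$ large---impossible since the right-hand side is $o(\log b_n)$ while $b_n\to\infty$.

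The only nontrivial obstacle is extracting the right power of $\|u_n\|_2$ in the case $b_n\to\infty$: the log-Sobolev inequality only gives linear control of $a_n$ in terms of $b_n\log b_n$, while Lemma \ref{L2Est} is what upgrades this into quadratic control, and the mismatch (quadratic versus $b_n\log b_n$) is exactly what forces the contradiction. Once this clash is identified, both cases close and $(u_n)$ must be bounded in $H^1_{\mathrm{rad}}(\mathbb{R}^N)$.
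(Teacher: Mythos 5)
Your argument is correct and rests on exactly the same ingredients as the paper's proof: the lower bound $J(u_n)\ge\frac{1}{2}\|u_n\|^2-\frac{1}{2}\int_{\mathbb{R}^N}u_n^2\log u_n^2$ (valid since $F(x,\cdot)\ge 0$ and $F_1\ge 0$), Corollary \ref{LogIneq1} to absorb half of the gradient term, and Lemma \ref{L2Est} to control $\|u_n\|_2^2$. The paper closes more quickly by observing that $\log\|u_n\|_2^2\le C_r\|u_n\|^{r}$ for any fixed $r\in(0,1)$, which turns the whole logarithmic term into $\frac{1}{4}\|u_n\|^2+C_1(1+\|u_n\|^{1+r})$ and gives boundedness directly from $1+r<2$, avoiding your two-case analysis on $\|u_n\|_2^2$.
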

	\begin{proof}
		By Lemma \ref{L2Est} and Corollary \ref{LogIneq1}, for each $r \in (0,1)$ there is $C_1>0$ such that
		$$
		\frac{1}{2}\int_{\mathbb{R}^N}u_n^2 \log u_n^2 \leq \frac{1}{4}\|u\|^2+C_1(1+\|u_n\|^{1+r}). 
		$$
 Since $J(u_n) \to c$, there is $n_0 \in \mathbb{N}$ such that
		$$
		\begin{aligned}
	c+1 \geq J(u_n) \geq \frac{1}{2}\|u_n\|^2-\frac{1}{2}\int_{\mathbb{R}^N}u_n^2 \log u_n^2,\,\,\,\, n\geq n_0.
		\end{aligned}
		$$
		Then
		$$
		c+1 \geq \frac{1}{4}\|u_n\|^2 - C_1(1+\|u_n\|^{1+r}),
		$$
for every $n\geq n_0$. This proves the desired result. 
	\end{proof}
	
	\begin{lemma}\label{PSC}
		The functional $J$ satisfies the $(\rm PS)$ condition.
	\end{lemma}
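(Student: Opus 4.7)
The plan is to start from an arbitrary $(\rm PS)$ sequence $(u_n)\subset H^{1}_{rad}(\mathbb{R}^N)$ for $J$ at some level $c\in\mathbb{R}$. By Lemma \ref{PSCB} the sequence is bounded, so after extraction one may assume $u_n\rightharpoonup u$ weakly in $H^{1}_{rad}(\mathbb{R}^N)$ and, by the compact embedding \eqref{Embedding}, $u_n\to u$ strongly in $L^p(\mathbb{R}^N)$ for every $p\in(2,2^*)$; a further extraction secures $u_n\to u$ almost everywhere. Since $H^{1}_{rad}(\mathbb{R}^N)$ is a Hilbert space, it then suffices to prove $\|u_n\|\to\|u\|$.

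To that end, I would test the Palais--Smale inequality in Definition \ref{cp}$\,ii)$ in the direction of the weak limit, choosing $v=u$ and expanding $\langle\Phi'(u_n),u-u_n\rangle=\langle u_n,u-u_n\rangle-\int_{\mathbb{R}^N}F_2'(u_n)(u-u_n)$, to obtain
\begin{equation*}
\langle u_n,u-u_n\rangle-\int_{\mathbb{R}^N}F_2'(u_n)(u-u_n)+\Bigl[\int_{\mathbb{R}^N}F_1(u)-\int_{\mathbb{R}^N}F_1(u_n)\Bigr]+[\Psi_2(u)-\Psi_2(u_n)]\geq\langle w_n,u-u_n\rangle,
\end{equation*}
with $w_n\to 0$ in $(H^{1}_{rad}(\mathbb{R}^N))^{*}$. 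Each of the last four quantities is then controlled as follows: the integral involving $F_2'$ vanishes by the subcritical growth in $(A_2)$ and Hölder's inequality, since $\|F_2'(u_n)\|_{p/(p-1)}$ stays bounded and $\|u-u_n\|_p\to 0$ for a fixed $p\in(2,2^*)$; the functional $u\mapsto\int_{\mathbb{R}^N}F_1(u)$ is convex and, by Fatou combined with a.e.\ convergence, sequentially lower semicontinuous, hence weakly lower semicontinuous on $H^{1}(\mathbb{R}^N)$, so $\limsup_n[\int_{\mathbb{R}^N}F_1(u)-\int_{\mathbb{R}^N}F_1(u_n)]\leq 0$; thanks to $(f_1)$ and $h\in L^1\cap L^\infty$, one has
\begin{equation*}
|\Psi_2(u_n)-\Psi_2(u)|\leq\tfrac{1}{2}\int_{\mathbb{R}^N}h(x)|u_n^2-u^2|\leq\tfrac{1}{2}\|h\|_r\,\|u_n-u\|_p\,\|u_n+u\|_p\longrightarrow 0,\qquad \tfrac{1}{r}+\tfrac{2}{p}=1;
\end{equation*}
and finally $\langle w_n,u-u_n\rangle\to 0$ by boundedness of $(u_n)$.

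Passing to $\limsup$, only $\limsup_n\langle u_n,u-u_n\rangle\geq 0$ survives. Writing $\langle u_n,u-u_n\rangle=\langle u_n,u\rangle-\|u_n\|^2$ and using $\langle u_n,u\rangle\to\|u\|^2$ from weak convergence, this reads $\limsup_n\|u_n\|^2\leq\|u\|^2$; combined with the weak lower semicontinuity $\|u\|\leq\liminf_n\|u_n\|$, it forces $\|u_n\|\to\|u\|$, and strong convergence in $H^{1}_{rad}(\mathbb{R}^N)$ follows.

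The step I expect to be the main obstacle is the control of the convex, nondifferentiable piece $\Psi_2$: the compact embedding gives convergence only in $L^p(\mathbb{R}^N)$ for $p\in(2,2^*)$, not in $L^2$, while a priori $\Psi_2$ is merely $L^2$-continuous. This is circumvented precisely by the $L^1\cap L^\infty$ integrability of $h$ in $(f_1)$, which furnishes the exponent $r=p/(p-2)$ needed to close the Hölder estimate above and thereby transfer the $L^p$-convergence into continuity of $\Psi_2$ along $(u_n)$. The logarithmic nonlinearity itself causes no trouble because of the splitting $F_2-F_1=\frac{1}{2}s^2\log s^2$, in which $F_2$ is subcritical (handled by $(A_2)$) and $F_1$ enters only through a sign-favorable convex integral.
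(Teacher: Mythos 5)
Your argument is correct and follows essentially the same route as the paper: test the Palais--Smale inequality at the weak limit, kill the $F_2'$ term via the compact embedding and the subcritical growth $(A_2)$, control the convex part by lower semicontinuity, and conclude $\|u_n\|\to\|u\|$ (the paper simply invokes weak lower semicontinuity of the full $\Psi$ where you split off $\Psi_2$ and prove its continuity along the sequence, which is a harmless refinement). One bookkeeping remark: the displayed inequality actually gives the stronger $\liminf_n\langle u_n,u-u_n\rangle\geq 0$, and it is this $\liminf$ version (not the $\limsup$ you state) that translates into $\limsup_n\|u_n\|^2\leq\|u\|^2$; with that one-word correction the proof closes exactly as in the paper.
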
	
	\begin{proof}
		Let $(u_n)$ be a $(\rm PS)$ sequence for $J$ at level $c$. By Lemma \ref{PSCB}, the sequence $(u_n)$ is bounded. Consequently, the embedding (\ref{Embedding}) yields
\begin{itemize}
\item[$i)$] $u_n \rightharpoonup u_0$ in $H_{rad}^1(\mathbb{R}^N);$
\item[$ii)$]  $u_n \rightarrow u_0 \in L^p(\mathbb{R}^N)$ with $p \in (2,2^*);$
\item[$iii)$]  $\|u_n\|\rightarrow M$ and $u_n(x) \rightarrow u_0(x)$ a.e. in $\mathbb{R}^N$.
		\end{itemize}
As $(u_n)$ is a $(\rm PS)$ sequence, we have that 
		\begin{equation}\label{PSs}
		\langle u_n, v-u_n \rangle + \Psi(v)-\Psi(u_n) \geq -\varepsilon_n\|v-u_n\| + \int_{\mathbb{R}^N}F_2'(u_n)(v-u_n)\,\,\,\, \forall v\in H_{rad}^1(\mathbb{R}^N),
		\end{equation}
		with $\varepsilon_n \rightarrow 0^+$. If we take $v:=u_0$ in (\ref{PSs}), the boundedness of $(u_n)$ and the  subcritical growth of $F_2$ immediately give
		\begin{equation}\label{pure}
		\langle u_n, u_0-u_n \rangle + \Psi(u_0)-\Psi(u_n) \geq o_n(1).
		\end{equation}
		Hence, the lower semicontinuity of $\Psi$ combined with the inequality \eqref{pure} leads to
		\begin{equation}
		{\|u_0\|}^2 \geq \lim \|u_n\|^2 =M^2,
		\end{equation}
on account of $i),ii)$ and $iii)$.
		In conclusion $u_n \rightarrow u_0$ in $H^1_{rad}(\mathbb{R}^N)$.
	\end{proof}
	
 In order to prove that $J$ satisfies the hypotheses of the Fountain Theorem \ref{Fountain1}, a suitable splitting of the Sobolev space $H_{rad}^1(\mathbb{R}^N)$ is necessary.
To this aim, we first observe that by \cite[Proposition 1.a.9 and Section 1.b, p. 8]{Lindestrauss} and \cite[Section 5]{Ji-Szulkin} the next property holds.

	\begin{lemma}\label{abstH}
		Let $A$ be a dense subset of $H^1(\mathbb{R}^N),$ then $H^1(\mathbb{R}^N)$ has an orthonormal hilbertian basis that is constituted by elements of $A$.
	\end{lemma}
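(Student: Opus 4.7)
The plan is to reduce the statement to a standard consequence of separability of $H^1(\mathbb{R}^N)$ combined with the Gram--Schmidt orthogonalization procedure. The main point to exploit is that $H^1(\mathbb{R}^N)$ is a separable Hilbert space, hence in particular admits a countable dense subset, and that density of $A$ lets us squeeze a countable dense family out of $A$ itself.

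First, I would fix any countable dense subset $(d_n)_{n\in\mathbb{N}}$ of $H^1(\mathbb{R}^N)$ (existence is classical, since $C_0^\infty(\mathbb{R}^N)$ is separable and dense). Using the density of $A$, for each $n$ one picks $a_n\in A$ with $\|a_n-d_n\|<1/n$; the triangle inequality then ensures that the countable family $(a_n)_{n\in\mathbb{N}}\subset A$ is itself dense in $H^1(\mathbb{R}^N)$.

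Second, I would pass to a linearly independent subsystem by a greedy inductive extraction: set $n_1:=1$ and, having chosen $n_1<\cdots<n_k$, let $n_{k+1}$ be the smallest index larger than $n_k$ with $a_{n_{k+1}}\notin\mathrm{span}\{a_{n_1},\dots,a_{n_k}\}$ (discarding indices that contribute nothing new). A standard check shows $\mathrm{span}\{a_{n_k}\}_k=\mathrm{span}\{a_n\}_n$, so by density
\begin{equation*}
\overline{\mathrm{span}\{a_{n_k}\}_k}=H^1(\mathbb{R}^N).
\end{equation*}
Applying the Gram--Schmidt procedure to the linearly independent sequence $(a_{n_k})_k$ produces an orthonormal sequence $(e_k)_k$ satisfying $\mathrm{span}\{e_1,\dots,e_k\}=\mathrm{span}\{a_{n_1},\dots,a_{n_k}\}$ for every $k$, whence $\overline{\mathrm{span}(e_k)_k}=H^1(\mathbb{R}^N)$, so that $(e_k)_k$ is a complete orthonormal Hilbertian basis.

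The main subtlety --- and the reason the authors invoke \cite{Lindestrauss} --- is the phrase \emph{``constituted by elements of $A$''}: the vectors $e_k$ produced by Gram--Schmidt lie a priori only in $\mathrm{span}(A)$, not in $A$ itself. This is harmless in the intended application, since $A$ will be a dense linear subspace (so that $\mathrm{span}(A)=A$ and Gram--Schmidt stays inside $A$); more generally, as in Proposition~1.a.9 of \cite{Lindestrauss}, a Bessaga--Pe\l czy\'nski-type small perturbation of a fixed orthonormal basis by elements of $A$ can be performed to yield a basic sequence literally contained in $A$, and a further Gram--Schmidt recovers orthonormality within $\overline{\mathrm{span}(A)}=H^1(\mathbb{R}^N)$. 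Thus the construction above, combined with this perturbation remark, is what I would expect to be the cleanest route; the main obstacle is precisely the bookkeeping needed to ensure linear independence throughout the extraction without losing density.
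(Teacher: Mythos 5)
Your argument is essentially the intended one: the paper gives no proof of Lemma \ref{abstH} at all, delegating it entirely to the citations of Lindenstrauss--Tzafriri and Ji--Szulkin, and the separability-plus-Gram--Schmidt route you describe is exactly what those references amount to in the Hilbert-space setting. Your extraction of a countable dense family from $A$, the passage to a linearly independent subsystem, and the orthonormalization are all correct. You have also put your finger on the one genuine delicacy in the statement: for an arbitrary dense \emph{subset} $A$, Gram--Schmidt leaves $A$, so the lemma as literally written is not established by this argument (nor, for that matter, by the paper's bare citation). Two caveats on your proposed repair for the general case: the Bessaga--Pe\l czy\'nski small-perturbation argument of \cite[Proposition 1.a.9]{Lindestrauss} yields a Schauder basis \emph{contained in} $A$ and \emph{equivalent to} an orthonormal basis, but not itself orthogonal, and your ``further Gram--Schmidt'' would again exit $A$; moreover even in the subspace case the normalized vectors need only lie in $A$ because $A$ is closed under scalar multiplication. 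Neither point matters for the paper, since the only instances used are $A=C_0^\infty(\mathbb{R}^N)$ and $A=C_{0,\,rad}^\infty(\mathbb{R}^N)$, which are dense linear subspaces, and the subsequent Corollary only uses pairwise orthogonality $\langle v_i,v_j\rangle=0$ rather than unit norms. So your proof is complete for the case actually needed, and your flagged subtlety is a real (if harmless) imprecision in the statement rather than a gap in your argument.
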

		
	Thanks to Lemma \ref{abstH} the following result holds. 	
	\begin{corollary}
The space
		$H^1(\mathbb{R}^N)$ has an orthonormal hilbertian basis constituted by elements of $C_0^\infty(\mathbb{R}^N)$. Consequently, there exists a sequence $(v_j) \subset C_0^\infty(\mathbb{R}^N)$ such that
		\begin{equation}
		H^1(\mathbb{R}^N)=\overline{\bigoplus_{j \in \mathbb{N}}X_j}\quad\mbox{with}\quad X_j=\text{span}\,\{v_j\},
		\end{equation}
		and $\langle v_i, v_j \rangle = 0,$ for every $i \neq j$.\par
 Moreover, the same conclusion holds if we replace $H^1(\mathbb{R}^N)$ and $C_0^\infty(\mathbb{R}^N)$ by $H_{rad}^1(\mathbb{R}^N)$ and   $C_{0,\,rad}^\infty(\mathbb{R}^N)$ respectively.
	\end{corollary}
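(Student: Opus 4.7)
The plan is to deduce this corollary directly from Lemma \ref{abstH} by exhibiting the appropriate dense subsets. First I would recall the classical density result $\overline{C_0^\infty(\mathbb{R}^N)}^{\|\cdot\|}=H^1(\mathbb{R}^N)$, which is standard (a convolution with a mollifier together with a cutoff argument). Applying Lemma \ref{abstH} with $A:=C_0^\infty(\mathbb{R}^N)$ then yields an orthonormal Hilbertian basis $(v_j)\subset C_0^\infty(\mathbb{R}^N)$ of $H^1(\mathbb{R}^N)$.

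Next, setting $X_j:=\mathrm{span}\,\{v_j\}$, we have $\dim X_j=1$ for every $j\in\mathbb{N}$, and by construction
\begin{equation*}
\langle v_i,v_j\rangle=\delta_{ij},\qquad\forall\,i,j\in\mathbb{N},
\end{equation*}
so in particular $\langle v_i,v_j\rangle=0$ for $i\neq j$. The fact that $(v_j)$ is a Hilbertian basis means that the linear span $\bigoplus_{j\in\mathbb{N}}X_j$ is dense in $H^1(\mathbb{R}^N)$, which is exactly the identity $H^1(\mathbb{R}^N)=\overline{\bigoplus_{j\in\mathbb{N}}X_j}$.

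For the radial counterpart, the argument is identical once one knows that $C_{0,\,rad}^\infty(\mathbb{R}^N)$ is dense in $H^1_{rad}(\mathbb{R}^N)$. This is the one step that requires a small observation rather than a direct citation: given $u\in H^1_{rad}(\mathbb{R}^N)$ one may approximate it in $H^1$-norm by $C_0^\infty$ functions and then symmetrize each approximant by averaging over the (compact) orthogonal group $O(N)$ with respect to the normalized Haar measure $\mu$; the resulting functions remain smooth and compactly supported (since the support is contained in a ball, which is $O(N)$-invariant), are radial, and still converge to $u$ in $H^1$ because the $O(N)$-action is isometric on $H^1(\mathbb{R}^N)$ and $u$ is already invariant. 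Alternatively, one invokes that $H^1_{rad}(\mathbb{R}^N)$ is a closed subspace of $H^1(\mathbb{R}^N)$ and applies the symmetrization-by-averaging trick directly to a mollified and truncated $u$.

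Once this density is established, Lemma \ref{abstH} applied to the Hilbert space $H^1_{rad}(\mathbb{R}^N)$ with dense subset $A:=C_{0,\,rad}^\infty(\mathbb{R}^N)$ produces the desired orthonormal Hilbertian basis $(v_j)\subset C_{0,\,rad}^\infty(\mathbb{R}^N)$, and setting $X_j:=\mathrm{span}\,\{v_j\}$ gives the decomposition $H^1_{rad}(\mathbb{R}^N)=\overline{\bigoplus_{j\in\mathbb{N}}X_j}$ with pairwise orthogonal one-dimensional summands. The only mildly delicate point is the radial density claim; everything else is a direct invocation of Lemma \ref{abstH}.
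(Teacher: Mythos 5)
Your proposal is correct and follows the same route as the paper, which derives the corollary directly from Lemma \ref{abstH} applied with $A:=C_0^\infty(\mathbb{R}^N)$ (and its radial analogue). The only added content is your justification of the density of $C_{0,\,rad}^\infty(\mathbb{R}^N)$ in $H^1_{rad}(\mathbb{R}^N)$ by averaging over $O(N)$, a detail the paper leaves implicit and which you handle correctly.
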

	
	From now on, let us consider
	\begin{equation}
	H_{rad}^1(\mathbb{R}^N)=\overline{\bigoplus_{j \in \mathbb{N}}X_j}
	\end{equation}
	and set
	\begin{equation}\label{DEC}
	Y_k := \bigoplus_{j=1}^k X_j\quad \mbox{ as well as } \quad Z_k:= \overline{\bigoplus_{j=k}^\infty X_j},
	\end{equation}
for every $k\in \N$.\par
Since the action of $\mathbb{Z}_2$ on $H_{rad}^1(\mathbb{R}^N)$ satisfies $(G_0)$ with $X_j \cong \mathbb{R}=:V$ we only need to prove that the functional $J$ satisfies the conditions $i)$ and $ii)$ of Fountain Theorem \ref{Fountain1}. \par
To this aim, let us will apply the following lemma 
	
	\begin{lemma}\label{Convergence}
		Let $\beta_k$ defined by
		\begin{equation}
		\beta_k:=\sup_{u \in Z_k, \|u\|=1} \|u\|_p.
		\end{equation}
		Then $\beta_k \rightarrow 0$.	
	\end{lemma}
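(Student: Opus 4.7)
The plan is to argue by the standard weak-to-strong compactness trick, leveraging the compact embedding \eqref{Embedding}. I would first observe that the sequence $(\beta_k)$ is nonincreasing, since $Z_{k+1}\subset Z_k$, and obviously nonnegative. Hence $\beta_k \to \beta \geq 0$, and it suffices to prove $\beta = 0$.

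Next, for each $k$ I would choose a near-maximizer $u_k \in Z_k$ with $\|u_k\|=1$ and $\|u_k\|_p \geq \beta_k - 1/k$. Being normalized, $(u_k)$ is bounded in $H^1_{rad}(\mathbb{R}^N)$, so up to a subsequence $u_k \rightharpoonup u$ weakly in $H^1_{rad}(\mathbb{R}^N)$. The crux is to show that $u = 0$. For this I exploit the fact that $(v_j)$ is an orthonormal Hilbert basis of $H^1_{rad}(\mathbb{R}^N)$ with $X_j=\mathrm{span}\{v_j\}$: for any fixed $j_0$, once $k \geq j_0 + 1$ the element $u_k$ lies in $Z_k \subset \overline{\bigoplus_{j \geq j_0+1} X_j}$, which is the orthogonal complement of $X_{j_0}$, so $\langle u_k, v_{j_0}\rangle = 0$. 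Passing to the weak limit yields $\langle u, v_{j_0}\rangle = 0$ for every $j_0$, and since $(v_j)$ spans $H^1_{rad}(\mathbb{R}^N)$ we conclude $u = 0$.

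Now the compact Sobolev embedding \eqref{Embedding} upgrades the weak convergence $u_k \rightharpoonup 0$ in $H^1_{rad}(\mathbb{R}^N)$ to strong convergence $u_k \to 0$ in $L^p(\mathbb{R}^N)$, so $\|u_k\|_p \to 0$. Combining this with $\|u_k\|_p \geq \beta_k - 1/k$ gives $\beta_k \to 0$, and since $(\beta_k)$ is monotone the limit of the full sequence is $0$.

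The only delicate point is verifying that the weak limit vanishes; the rest is routine. That step depends crucially on two structural features: that the decomposition $H^1_{rad}(\mathbb{R}^N)=\overline{\bigoplus_j X_j}$ is \emph{orthogonal} (so $Z_k$ is exactly the orthogonal complement of $Y_{k-1}$), and that $(v_j)$ is actually a \emph{basis}, so weak orthogonality to every $v_j$ forces the limit to be zero. Both are guaranteed by the corollary preceding the lemma.
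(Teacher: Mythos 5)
Your proof is correct and is essentially the standard argument: the paper does not reproduce a proof but simply cites \cite[Lemma 3.8]{Willem}, whose proof is exactly your weak-to-strong compactness scheme (near-maximizers in the tail spaces $Z_k$ converge weakly to $0$ by orthogonality to each $v_{j_0}$, and the compact embedding \eqref{Embedding} upgrades this to $\|u_k\|_p \to 0$). Your treatment of the delicate point --- that the weak limit vanishes because the decomposition is orthogonal and $(v_j)$ is a Hilbert basis --- is exactly right, and the monotonicity of $(\beta_k)$ correctly handles the passage from a subsequence back to the full sequence.
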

\begin{proof} See \cite[Lemma 3.8]{Willem} and also the proof of Proposition 3.7 in \cite{Ji-Szulkin} for additional comments and remarks.
\end{proof}

Taking into account Lemma \ref{Convergence}, we are able to prove that the functional $J$ satisfies the Fountain geometry. 
	\begin{lemma}\label{Fino}
		The functional $J$ verifies 
\begin{itemize}
	\item [$i)$] $\displaystyle{\sup_{u \in Y_k, \|u\|= \rho_k}}J(u)\leq 0;$
	\item [$ii)$] $	\displaystyle{\inf_{u \in Z_k, \|u\|=r_k}}J(u) \rightarrow \infty.$
\end{itemize}
	\end{lemma}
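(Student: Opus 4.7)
My plan is to rewrite $J$ in its original form
\[
J(u) = \frac{1}{2}\|u\|^2 + \int_{\mathbb{R}^N} F(x,u)\,dx - \frac{1}{2}\int_{\mathbb{R}^N} u^2 \log u^2\,dx,
\]
which follows from the identity $F_2(s) - F_1(s) = \tfrac{1}{2}s^2 \log s^2$, and then attack $(i)$ and $(ii)$ separately. Because $F(x,\cdot) \geq 0$ by hypothesis, the key lower bound I will use for $(ii)$ is
\[
J(u) \geq \frac{1}{2}\|u\|^2 - \frac{1}{2}\int_{\mathbb{R}^N} u^2 \log u^2\,dx,
\]
while for $(i)$ I will keep the logarithmic term in place and exploit its dominant behavior at infinity.

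For $(i)$ I will work on the finite dimensional space $Y_k$. By equivalence of norms there exist $\nu_k,M_k>0$ such that $\|v\|_2\geq \nu_k$ and $\int v^2\log v^2\,dx \geq -M_k$ for every $v$ in the unit sphere of $Y_k$ (both functionals are continuous on the compact sphere). Bounding $\int F(x,u)\,dx \leq \tfrac{1}{2}\|h\|_\infty\|u\|_2^2$ via $(f_1)$ and writing $u=\rho_k v$, the scaling identity
\[
\int u^2\log u^2\,dx = \rho_k^2(\log \rho_k^2)\|v\|_2^2 + \rho_k^2\int v^2\log v^2\,dx
\]
leads to an estimate of the shape $J(\rho_k v) \leq C_k\rho_k^2 - \tfrac{1}{2}\nu_k^2\,\rho_k^2\log \rho_k^2$, uniformly in $v$ on the unit sphere. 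Since $\log \rho_k^2\to \infty$, taking $\rho_k$ large makes the right-hand side negative, which proves $(i)$.

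For $(ii)$ I fix any $p\in(2,2^*)$. The elementary inequality $\log t \leq t^{p-2}/(p-2)$ for $t\geq 1$, together with $u^2\log u^2 \leq 0$ on $\{|u|\leq 1\}$, yields
\[
\int_{\mathbb{R}^N} u^2\log u^2\,dx \leq \frac{2}{p-2}\|u\|_p^p.
\]
Combined with Lemma \ref{Convergence}, which provides $\|u\|_p \leq \beta_k\|u\|$ on $Z_k$ with $\beta_k \to 0$, this gives
\[
J(u) \geq \frac{1}{2}\|u\|^2 - \frac{\beta_k^p}{p-2}\|u\|^p.
\]
The optimal radius is $r_k := \bigl(\tfrac{p-2}{p\beta_k^p}\bigr)^{1/(p-2)}$, which tends to infinity, and a direct substitution produces
\[
\inf_{u\in Z_k,\ \|u\|=r_k} J(u) \geq \Bigl(\tfrac{1}{2}-\tfrac{1}{p}\Bigr) r_k^2 \longrightarrow \infty.
\]
To match the Fountain geometry, $\rho_k$ from $(i)$ can be enlarged at each stage so that $\rho_k>r_k$.

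The delicate step is $(ii)$: a direct application of the logarithmic Sobolev inequality of Corollary \ref{LogIneq1} only yields $J(u) \geq \tfrac{1}{4}\|u\|^2 - C(\log \|u\|_2^2 + 1)\|u\|_2^2$, and because $\|u\|_2$ does not decay along $Z_k$, this cannot force $\inf J \to \infty$ on the relevant spheres. It is the genuinely subcritical exponent $p\in(2,2^*)$ coupled with the compact embedding $H^1_{rad}(\mathbb{R}^N)\hookrightarrow L^p(\mathbb{R}^N)$ (i.e.\ $\beta_k\to 0$) that makes the optimization work. Everything else reduces to a scaling argument and compactness of the unit sphere in $Y_k$.
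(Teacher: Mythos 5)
Your proposal is correct and follows essentially the same route as the paper: part $(i)$ is the paper's scaling argument on the finite-dimensional sphere of $Y_k$ (equivalence of norms plus the divergence of $\log\rho_k^2$), and part $(ii)$ arrives at the same bound $J(u)\geq \tfrac{1}{2}\|u\|^2 - C\beta_k^p\|u\|^p$ and the same choice of $r_k$ via Lemma \ref{Convergence}. The only difference is cosmetic: in $(ii)$ you control the positive part of $\int u^2\log u^2$ directly through the elementary inequality $\log t \leq t^{p-2}/(p-2)$, whereas the paper drops the nonnegative terms $\int F$ and $\int F_1$ and invokes the growth condition $(A_2)$ on $F_2$, which encodes the same subcritical bound.
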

	\begin{proof}
 We first recall that
		$$J(u)=\frac{1}{2}\|u\|^2+\int_{\mathbb{R}^N}F(x,u)+\int_{\mathbb{R}^N}F_1(u) - \int_{\mathbb{R}^N} F_2(u)\,\,\,\,u\in H_{rad}^1(\mathbb{R}^N).$$
		\indent Part $i)$ - By $(f_1)$ one has
		$$|F(x,s)|\leq B|s|^2,\,\,\, \forall x \in \mathbb{R}^N \quad \mbox{and} \quad \forall s \in \mathbb{R},$$
		for some constant $B>0$. Now, by definition, since $Y_k \subset C_{0,\,rad}^\infty(\mathbb{R}^N)$ it follows that $Y_k \subset D(J)$ for each $k \in \mathbb{N}$. Hence
		\begin{equation}
		J(u) \leq \frac{1}{2}\|u\|^2+B\|u\|_2^2-\frac{1}{2}\int_{\mathbb{R}^N}u^2\log u^2,
		\end{equation}
for every $u\in Y_k$.\par
		\noindent If we take $v:=\displaystyle{\frac{u}{\|u\|}}$ for $u \neq 0$, it follows that
		\begin{equation}\label{Estimate}
		\begin{aligned}
		J(u)&\leq \frac{1}{2}\|u\|^2\left(1+B-\int_{\mathbb{R}^N}v^2\log(v^2\|u\|^2)\right)\\
		&= \frac{1}{2}\|u\|^2\left(1+B-\int_{\mathbb{R}^N}v^2\log v^2-\log (\|u\|^2)\int_{\mathbb{R}^N}v^2\right),
		\end{aligned}
		\end{equation}
		for every $u\in Y_k$. As $\dim Y_k < \infty$,  all the norms on $Y_k$ are equivalent. Consequently, if $\|u\| = \rho_k\approx \infty$, one gets
	$$
1+B-\int_{\mathbb{R}^N}v^2\log v^2-\log (\|u\|^2)\int_{\mathbb{R}^N}v^2\leq 0.$$ 
Hence
		\begin{equation*}
		\sup_{u \in Y_k, \|u\|= \rho_k}\,J(u) \leq 0,
		\end{equation*}
so that $i)$ is verified.\par
		\indent Part $ii)$ - By $(A_2)$ for every $s\in \R$,
		$$|F_2(s)|\leq C|s|^p,\quad p\in(2,2^*),$$
		for some $C>0$. Hence
	$$
	J(u)\geq \frac{1}{2}\|u\|^2-\int_{\mathbb{R}^N}F_2(u)\geq \frac{1}{2}\|u\|^2-\beta_k^p C\|u\|^p,
	$$
	for every $u\in Z_k$. Moreover, by Lemma \ref{Convergence} one has $\beta_k\rightarrow 0$. Then, by choosing 			$$r_k:=(pC\beta_k^p)^{\frac{1}{2-p}},$$ it follows that $r_k \rightarrow \infty$ and
		\begin{equation*}
		J(u)\geq\left(\frac{1}{2}-\frac{1}{p}\right)r_k^2.
		\end{equation*}
	In conclusion
		\begin{equation*}
		\inf_{u \in Z_k,\,\|u\|=r_k}\,J(u) >0,
		\end{equation*}
	for $k$ sufficiently large.
	\end{proof}
	
	\noindent {\bf Conclusion of proof of Theorem \ref{Theorem}.} First of all, we emphasize that the minimax levels
	\begin{equation*}
	c_k:=\inf_{\gamma \in \Theta_k}\,\sup_{u\in B_k} J(\gamma(u))
	\end{equation*}
	are finite. Indeed, if we take $\tilde{\gamma}:=Id|_{B_k},$ by using the classical inequality
	\begin{equation*}
	|t^2\log t^2| \leq C(|t|+|t|^p),\,\,\, p>2 \quad \mbox{and} \quad \forall t \in \mathbb{R},
	\end{equation*}
	we infer that there exists $C_1>0$ such that
	\begin{equation}\label{Estimate2}
	J(\tilde{\gamma}(u))\leq |J(u)|\leq \frac{1}{2}\|u\|^2+ B\|u\|_2^2+C_1(\|u\|_1+\|u\|_p^p),
	\end{equation}
for every $u \in B_k \subset Y_k$.
 The equivalence of the norms in $Y_k$ combined with (\ref{Estimate2}) guarantee that 	
	$$
	c_k=\inf_{\gamma \in \Theta_k}\,\sup_{u\in B_k} J(\gamma(u))\leq \sup_{u\in B_k}\,J(\tilde{\gamma}(u)) <\infty.
	$$

\indent Finally, we would like to point out that  if $u \in H^{1}(\mathbb{R}^N)$ is a critical point of $I$, then there exists $\rho \in L^{2}(\mathbb{R}^N)$ with $$ \rho(x) \in [\underline{f}(x,u(x)),\overline{f}(x,u(x))]\,\,\,\, \text{a.e. in}\,\,\, \mathbb{R}^N,$$ such that
$$
\int_{\mathbb{R}^N}(\nabla u \nabla \phi +u\phi)\,dx+\int_{\mathbb{R}^N}\rho(x)\phi\,dx=\int_{\mathbb{R}^N}u^{2}\log u \phi \,dx \quad \forall \phi \in C_{0}^{\infty}(\mathbb{R}^N).
$$	
Therefore, by elliptic regularity theory, there is 	$r\geq 1$ such that $u \in H^1(\mathbb{R}^N)\cap W_{loc}^{2,r}(\mathbb{R}^N)$ and
$$
-\Delta u +u +\rho(x)= u\log u^2\,\,\,\, \text{a.e. in}\,\,\, \mathbb{R}^N.
$$
In conclusion
\begin{equation*}
\Delta u - u + u\log u^2 \in [\underline{f}(x,u(x)),\overline{f}(x,u(x))]\,\,\,\, \text{a.e. in}\,\,\, \mathbb{R}^N.
\end{equation*}

\subsection{A concave perturbation of logarithmic equation} In this subsection we study the existence of solutions for the following class of problems
$$
\left\{\begin{aligned}
-&\Delta u + u = u\log u^2 + \lambda h(x)|u|^{q-2}u,  \;\;\mbox{in}\;\;\mathbb{R}^{N}, \\
& u \in H^{1}(\mathbb{R}^{N}),
\end{aligned}
\right. \leqno{(P_2)}
$$
where $\lambda$ is a positive parameter, $q\in(1,2)$ and $h:\mathbb{R}^N\rightarrow\mathbb{R}$ satisfies $(h_1)$. By using the same notations of the previous subsection, the energy functional associated to $(P_2)$ is given by
\begin{equation}\label{EF1}
I_\lambda(u):= \frac{1}{2}\|u\|^2+\int_{\mathbb{R}^N}F_1(u)-\int_{\mathbb{R}^N}F_2(u)-\frac{\lambda}{q}\int_{\mathbb{R}^N}|u|^qh(x),\,\,\,\, u \in H^1(\mathbb{R}^N).
\end{equation}

\noindent Note that $I_\lambda \in (H_0)$, with $I_\lambda(u)=\Phi(u)+\Psi(u),$ where
$$\Phi(u):= \frac{1}{2}\|u\|^2-\int_{\mathbb{R}^N}F_2(u)-\frac{\lambda}{q}\int_{\mathbb{R}^N}h|u|^q
$$
and
$$\Psi(u):=\int_{\mathbb{R}^N}F_1(u).$$

In the sequel, we say that a function $u \in H^1(\mathbb{R}^N)$ is a solution of $(P_2)$ if $u^2\log u^2 \in L^1(\mathbb{R}^N)$ and
\begin{equation}\label{AS2}
\int_{\mathbb{R}^N}(\nabla u \nabla \phi+u\phi)=\int_{\mathbb{R}^N}(u\log u^2 \phi+\lambda h(x)|u|^{q-2}u\phi),\,\,\,\,\forall \phi \in C_0^\infty(\mathbb{R}^N).
\end{equation}
\indent By Part - $ii)$ of Lemma \ref{Psi1} it is to see that any critical point of $I_\lambda \in (H_0)$ is a solution of $(P_2)$; see also \cite[Lemma 2.1]{Alves-de Morais}. Moreover, again by Theorem \ref{PC}, if we define $J_\lambda:=I_\lambda|_{H_{rad}^1(\mathbb{R}^N)}$, the critical points of $J_\lambda$ are also critical points of the functional $I_\lambda$. \par

The main result this subsection reads as follows.

\begin{theorem}\label{CL}
	The functional $J_\lambda$ has infinitely many critical points $(u_n)$ with $J_\lambda(u_n)\rightarrow 0$ as $n\rightarrow \infty$. Hence, problem $(P_2)$ has infinitely many nontrivial solutions.
\end{theorem}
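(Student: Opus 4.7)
The strategy is to apply Theorem \ref{HT} to the restriction $J_\lambda:=I_\lambda|_{H_{rad}^1(\mathbb{R}^N)}$ with $G=\mathbb{Z}_2$ acting as $\{\mathrm{Id},-\mathrm{Id}\}$. Critical points of $J_\lambda$ lift to critical points of $I_\lambda$ by the symmetric criticality principle (Theorem \ref{PC}), and these in turn are weak solutions of $(P_2)$ by the arguments of Subsection 4.1 (see Lemma \ref{Psi1}).

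First I would verify the structural hypotheses. Using the splitting of Subsection 4.1 and absorbing the concave term into the $C^1$-part, write $I_\lambda=\Phi+\Psi$ with
\[
\Phi(u)=\tfrac{1}{2}\|u\|^2-\int_{\mathbb{R}^N}F_2(u)\,dx-\tfrac{\lambda}{q}\int_{\mathbb{R}^N}h(x)|u|^q\,dx,\qquad \Psi(u)=\int_{\mathbb{R}^N}F_1(u)\,dx.
\]
The concave term is $C^1$ on $H^1(\mathbb{R}^N)$ because $q\in(1,2)\subset(1,2^*)$ and $h\in L^1\cap L^\infty$, so $J_\lambda\in(H_0)$, $J_\lambda(0)=0$, and $\Phi,\Psi$ are even, hence $\mathbb{Z}_2$-invariant. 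Condition $(G_0)$ holds with the one-dimensional spaces $X_j\cong\mathbb{R}$ coming from the radial basis of Subsection 4.1, and $(G_\ast)$ is automatic by Proposition \ref{GP}-$v)$. The (PS) condition is checked by mimicking Lemmas \ref{PSCB}--\ref{PSC}: the concave term is bounded by $C_\lambda\|u\|^q$ with $q<2$ so it is absorbed in the logarithmic Sobolev argument, and the compact embedding $H_{rad}^1\hookrightarrow L^r$ for $r\in(2,2^*)$ (together with $L^q$ control via $h$'s decay for $q\in(1,2)$) yields strong convergence of the subcritical pieces, so that convexity of $\Psi$ closes the argument.

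Condition $(ii)$ of Theorem \ref{HT} is where the concave perturbation plays its role. For each $j\in\mathbb{N}$ pick a $j$-dimensional subspace $Y_j\subset C_{0,\,rad}^\infty(\mathbb{R}^N)$ from the basis, so that
\[
m_j:=\inf\Big\{\int_{\mathbb{R}^N}h(x)|v|^q\,dx : v\in Y_j,\ \|v\|=1\Big\}>0
\]
by finite-dimensionality of $Y_j$ and the positivity assumption $(h_1)$. Expanding $J_\lambda$ at $u=tv$ with $\|v\|=1$ and $t\to 0^+$ gives
\[
J_\lambda(tv)\leq C\,t^2\bigl(1+|\log t|\bigr)-\tfrac{\lambda m_j}{q}\,t^q,
\]
and since $q<2$ the concave term dominates, so for $t_j$ small enough the compact symmetric set $A_j:=t_j\{v\in Y_j:\|v\|=1\}$ satisfies $0\notin A_j$, $\gamma_{\mathbb{Z}_2}(A_j)=j$ and $\sup_{A_j}J_\lambda<0$, as required.

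The main obstacle is condition $(i)$, i.e.\ $c_j>-\infty$. Contrary to the classical Clark setting, $J_\lambda$ is \emph{not} globally bounded below on $H_{rad}^1$: on the sphere of radius $R$ in a fixed finite-dimensional $Y_j$, the term $-\tfrac{1}{2}\int u^2\log u^2$ behaves like $-\tfrac{R^2}{2}\log R^2\to-\infty$, so an unrestricted minimax would give $c_j=-\infty$. My plan to circumvent this is a truncation argument: construct $\tilde J_\lambda=\tilde\Phi+\Psi\in(H_0)$ that coincides with $J_\lambda$ on a large ball $B_{R_0}\subset H_{rad}^1$ and is globally bounded below, obtained by a smooth cut-off of $L$ in the pointwise variable preserving convexity of $-\int \tilde F_2$. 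Then Theorem \ref{HT} applies to $\tilde J_\lambda$ and delivers critical points with levels $\tilde c_n\to 0^-$; for $\lambda$ small these critical points have $H^1$-norm so small that they lie inside $B_{R_0}$, where $\tilde J_\lambda\equiv J_\lambda$, and hence are genuine critical points of $J_\lambda$. This produces the sequence $(u_n)$ with $J_\lambda(u_n)\to 0^-$ and completes the proof.
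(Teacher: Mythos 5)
Your overall strategy is the one the paper follows: apply Theorem \ref{HT} (with $G=\mathbb{Z}_2$) to a truncated functional, use the concave term $-\frac{\lambda}{q}\int h|u|^q$ to verify condition $ii)$ on small spheres in finite-dimensional subspaces of $C^{\infty}_{0,\,rad}(\mathbb{R}^N)$, and then recover critical points of $J_\lambda$ from those of the truncation. Your verification of $ii)$ (the $t^2|\log t|$ versus $t^q$ comparison) and your identification of the obstacle in $i)$ — that $J_\lambda$ is not bounded below, so the unmodified minimax values could be $-\infty$ — are both correct and match the paper.

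The gap is in the truncation mechanism itself. You propose to cut off $L$ (equivalently $F_2$) \emph{in the pointwise variable} $s$ and then claim that the resulting $\tilde J_\lambda$ coincides with $J_\lambda$ on a ball $B_{R_0}\subset H^1_{rad}(\mathbb{R}^N)$. A pointwise truncation changes $\int_{\mathbb{R}^N}F_2(u)$ for every $u$ whose values exceed the truncation level on a set of positive measure, and smallness of $\|u\|_{H^1}$ does not control $\|u\|_\infty$ (for $N\ge 2$ every $H^1$-ball contains unbounded functions). So $\tilde J_\lambda\not\equiv J_\lambda$ on any $H^1$-ball, and your final step — ``these critical points lie inside $B_{R_0}$, where $\tilde J_\lambda\equiv J_\lambda$'' — does not follow as stated; salvaging a pointwise truncation would require an $L^\infty$-estimate on the critical points, which you do not supply. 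The paper truncates in the \emph{norm} variable instead: it first shows $J_\lambda(u)\ge g(\|u\|)$ with $g(t)=\frac12 t^2-C_1t^p-\lambda C_2t^q$, which for $\lambda$ small has zeros $R_0<R_2$ surrounding a nonnegative maximum, and then replaces $\int F_2(u)$ by $\eta(\|u\|)\int F_2(u)$ with $\eta\equiv1$ on $[0,R_0]$ and $\eta\equiv0$ on $[R_2,\infty)$. This keeps $\Psi=\int F_1$ untouched (so $\tilde J_\lambda\in(H_0)$), makes $\tilde J_\lambda$ coercive (hence $c_j>-\infty$ and bounded $(\rm PS)$ sequences come for free), gives exact coincidence with $J_\lambda$ on $\{\|u\|\le R_0\}$, and — crucially — yields the implication $\tilde J_\lambda(u)<0\Rightarrow\|u\|<R_0$, so the negativity of the levels produced by Theorem \ref{HT} automatically places the critical points in the region where the two functionals agree. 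You should replace your pointwise cut-off by this norm cut-off; the rest of your argument then goes through essentially as in the paper.
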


In order to prove Theorem \ref{CL}, let us introduce a modified functional $\tilde{J}_\lambda$ which will be crucial in our approach. However, let us start by proving the following technical result.

\begin{proposition}
	 If $\lambda \approx 0^+,$ then there is a function
	$$g(t):= \frac{1}{2}t^2-Bt^p-C\lambda t^q,\,\,\,t>0,$$
with $p \in (2,2^*)$  and  $B,C>0$, that attains a non-negative maximum and
	$$J_\lambda(u)\geq g(\|u\|)\quad \forall u \in H^{1}(\mathbb{R}^N).$$
\end{proposition}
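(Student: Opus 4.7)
The plan is to bound $J_\lambda$ from below by a function depending only on $\|u\|$, then analyze that one-variable function. First I would estimate each term of $J_\lambda(u)$ separately. By $(A_1)$, $F_1\ge 0$, so $\int_{\mathbb{R}^N}F_1(u)\ge 0$. The subcritical growth in $(A_2)$ gives $|F_2(s)|\le C_1|s|^p$ for some $p\in(2,2^*)$, so the Sobolev embedding $H^1(\mathbb{R}^N)\hookrightarrow L^p(\mathbb{R}^N)$ yields
\[
\int_{\mathbb{R}^N}F_2(u)\;\le\;B\,\|u\|^p
\]
with $B>0$ independent of $u$. For the concave term, condition $(h_1)$ gives $h$ enough integrability to apply H\"older with an exponent paired to $q$; combined with a Sobolev embedding, this produces $\widetilde{C}>0$ with $\int_{\mathbb{R}^N}h(x)|u|^q\le \widetilde{C}\|u\|^q$. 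Substituting into \eqref{EF1} yields
\[
J_\lambda(u)\;\ge\;\tfrac{1}{2}\|u\|^2-B\|u\|^p-\tfrac{\widetilde{C}\lambda}{q}\|u\|^q\;=:\;g(\|u\|),
\]
which, after renaming $C:=\widetilde{C}/q$, matches the stated form.

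Next I would verify that $g$ attains a non-negative maximum on $[0,\infty)$ for $\lambda\approx 0^+$. Since $g(0)=0$, $g$ is continuous, and $g(t)\to -\infty$ as $t\to\infty$ (because $p>2>q$), the supremum of $g$ on $[0,\infty)$ is attained; the real point is that the maximum is $\ge 0$ for small $\lambda$. To see this, introduce the unperturbed function $g_0(t):=\tfrac{1}{2}t^2-Bt^p$. A direct computation shows $g_0$ has a unique positive critical point $t_0=(pB)^{-1/(p-2)}$, and
\[
g_0(t_0)\;=\;\Bigl(\tfrac{1}{2}-\tfrac{1}{p}\Bigr)t_0^{\,2}\;=\;\tfrac{p-2}{2p}\,t_0^{\,2}\;>\;0.
\]
Evaluating $g$ at the fixed point $t_0$ gives $g(t_0)=g_0(t_0)-C\lambda t_0^{\,q}$, which is strictly positive whenever $0<\lambda<\lambda_*:=g_0(t_0)/(Ct_0^{\,q})$. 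Therefore, for such $\lambda$, $\max_{t\ge 0}g(t)\ge g(t_0)>0$, which proves the claim.

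I do not anticipate a serious obstacle: the argument is a soft perturbation from the well-known $\lambda=0$ case, in which the lower bound $g_0$ clearly has a positive mountain-pass type maximum. The only care required is at the very first step, namely matching the integrability assumption $(h_1)$ on $h$ with the H\"older exponent needed so that $\|u\|_s\le C\|u\|$ holds for the Sobolev exponent $s$ conjugate to $q$; once this bound is available, the remaining estimates and the one-variable analysis of $g$ are routine.
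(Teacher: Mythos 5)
Your proposal is correct and follows essentially the same route as the paper: the paper likewise uses $F_1\ge 0$, the subcritical bound on $F_2$, and a H\"older/Sobolev estimate on $\int h|u|^q$ to obtain $J_\lambda(u)\ge \tfrac12\|u\|^2-C_1\|u\|^p-\lambda C_2\|u\|^q=:g(\|u\|)$, and then simply asserts that $g$ attains a non-negative maximum for $\lambda\approx 0^+$. Your explicit verification of that last claim (evaluating $g$ at the maximizer $t_0$ of the unperturbed $g_0$ and choosing $\lambda<g_0(t_0)/(Ct_0^{\,q})$) is a correct filling-in of a detail the paper leaves to the reader.
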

\begin{proof}
Since $F_1\geq 0$, we easily get
\begin{equation*}
\begin{split}
 J_\lambda(u)\geq & \frac{1}{2}\|u\|^2-\int_{\mathbb{R}^N}F_2(u)-\frac{\lambda}{q}\int_{\mathbb{R}^N}|u|^qh(x) \\
 &\geq \frac{1}{2}\|u\|^2- C_1\|u\|^p-\lambda C_2\|u\|^q\\
 & =:g(\|u\|),
\end{split}
\end{equation*}
	for some $C_1=C(p)>0$ and $C_2=C(h,q)>0$. Moreover, if $\lambda \approx 0^+$ it is clearly seen that the function $g$ attains a non-negative maximum.
\end{proof}

\noindent Now, fix $R_0, R_1$ and $R_2$ positive constants satisfying: 
\begin{itemize}
\item[$(g_1)$] $g|_{[0,R_0]}\leq 0$ and $g(R_0)=0$;
\item[$(g_2)$] $g|_{[R_0, R_2]}\geq 0$, $g|_{[R_2,\infty)}\leq 0$ and $g(R_2)=0$, where $R_0<R_1<R_2$ and $R_1$ is the point in which $g$ attains its maximum value; note that $g(t)\rightarrow -\infty$, as $t\rightarrow \infty$.
\end{itemize}
Moreover, take $\eta \in C^\infty([0,\infty))$ such that the following condition holds:
\begin{itemize}
\item[$(\eta_1)$] $\eta$ is a non-negative and non-increasing function such that $$\eta|_{[0,R_0]}\equiv 1\quad \mbox{ and }\quad\eta|_{[R_2,\infty)}\equiv 0.$$
\end{itemize}
Set $\varphi(u):=\eta(\|u\|)$. Arguing as in \cite{peral}, let us consider the energy functional
\begin{equation}\label{EF2}
\tilde{J}_\lambda(u):=\frac{1}{2}\|u\|^2+\int_{\mathbb{R}^N}F_1(u)-\varphi(u)\int_{\mathbb{R}^N}F_2(u)-\frac{\lambda}{q}\int_{\mathbb{R}^N}|u|^qh(x),
\end{equation}
for every $u\in H_{rad}^1(\mathbb{R}^N)$.\par

\begin{lemma}\label{TL2}
Let $\tilde{J}_\lambda$ be the functional given in \eqref{EF2}. Then, the following facts hold:
\begin{itemize}
	\item[$i)$] $\tilde{J}_\lambda \in (H_0)$ with $\tilde{J}_\lambda=\tilde{\Phi}_\lambda+\tilde{\Psi}$ and $\tilde{\Psi}=\Psi|_{H_{rad}^1(\mathbb{R}^N)}$$;$
	\item[$ii)$] If $\tilde{J}_\lambda(u)<0$ then $\|u\|< R_0$ and
	$\tilde{J}_\lambda(u)=J_\lambda(u);$
	\item[$iii)$] Let $(u_n)$ be a ${(\rm PS)}_{c}$ sequence for $\tilde{J}_\lambda$ with $c<0$ then $(u_n)$ is a ${(\rm PS)}_c$ sequence for $J_\lambda;$
	\item[$iv)$] If $u \in B_{R_0}(0)$ is a critical point of $\tilde{J}_\lambda$ then $u$ is a critical point of $J_\lambda$.
\end{itemize}
\end{lemma}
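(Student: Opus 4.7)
My plan is to treat the four parts in order, exploiting the central fact that the cutoff $\varphi(u)=\eta(\|u\|)$ equals $1$ on the closed ball of radius $R_0$ and equals $0$ outside the ball of radius $R_2$, so that $\tilde J_\lambda$ differs from $J_\lambda$ only in the annular region $R_0\le\|u\|\le R_2$.

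For Part $i)$, I would set $\tilde\Psi:=\Psi|_{H^1_{rad}(\mathbb R^N)}$, which remains convex, proper and l.s.c., and let
$$\tilde\Phi_\lambda(u):=\tfrac{1}{2}\|u\|^2-\varphi(u)\int_{\mathbb R^N}F_2(u)-\tfrac{\lambda}{q}\int_{\mathbb R^N}h(x)|u|^{q}.$$
The only term needing scrutiny is $u\mapsto\varphi(u)\int F_2(u)$: the factor $u\mapsto\int F_2(u)$ is $C^1$ by the subcritical growth $(A_2)$, and $\varphi(u)=\eta(\|u\|)$ is $C^1$ on $H^1_{rad}(\mathbb R^N)$ because away from $0$ the norm is smooth on a Hilbert space, while in a whole neighbourhood of $0$ one has $\varphi\equiv 1$ by $(\eta_1)$. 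The concave term is $C^1$ since $q>1$ and $h\in L^1\cap L^\infty$. Hence $\tilde J_\lambda\in(H_0)$.

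For Part $ii)$, using $F_1\ge 0$ together with the same estimates that produced $J_\lambda(u)\ge g(\|u\|)$ yields
$$\tilde J_\lambda(u)\ \ge\ \tfrac{1}{2}\|u\|^2-\varphi(u)\,C_1\|u\|^p-\lambda C_2\|u\|^q.$$
Split into cases. If $R_0\le\|u\|\le R_2$, then $\varphi(u)\in[0,1]$, so the right-hand side dominates $g(\|u\|)$, which is $\ge 0$ by $(g_2)$. If $\|u\|\ge R_2$, then $\varphi(u)=0$ by $(\eta_1)$ and the lower bound reduces to $\|u\|^q\bigl(\tfrac{1}{2}\|u\|^{2-q}-\lambda C_2\bigr)$, which is non-negative for all such $u$ provided $\lambda$ is taken small enough that $(2\lambda C_2)^{1/(2-q)}\le R_2$. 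Thus $\tilde J_\lambda(u)\ge 0$ whenever $\|u\|\ge R_0$; contrapositively, $\tilde J_\lambda(u)<0$ forces $\|u\|<R_0$, and on this region $\varphi\equiv 1$, giving $\tilde J_\lambda(u)=J_\lambda(u)$.

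Parts $iii)$ and $iv)$ then follow by a local identification argument. On the open ball $B_{R_0}(0)$ the functionals $\tilde\Phi_\lambda$ and $\Phi_\lambda|_{H^1_{rad}(\mathbb R^N)}$ coincide in a whole neighbourhood of every point (since $\varphi\equiv 1$ there), so their Fréchet derivatives agree pointwise, and $\tilde\Psi=\Psi|_{H^1_{rad}(\mathbb R^N)}$. For $iii)$, if $(u_n)$ is a $(\mathrm{PS})_c$ sequence for $\tilde J_\lambda$ with $c<0$, then eventually $\tilde J_\lambda(u_n)<0$, hence by $ii)$ one has $\|u_n\|<R_0$ for $n$ large; the Palais--Smale variational inequality for $\tilde J_\lambda$ then reads verbatim as the one for $J_\lambda$. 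For $iv)$, if $u\in B_{R_0}(0)$ is critical for $\tilde J_\lambda$, replacing $\tilde\Phi_\lambda'(u)$ by $\Phi_\lambda'(u)$ and $\tilde\Psi$ by $\Psi|_{H^1_{rad}(\mathbb R^N)}$ in the critical-point inequality gives exactly the critical-point condition for $J_\lambda$. The only genuinely delicate points are the $C^1$-smoothness of $\varphi$ at the origin in $i)$, handled by the flatness of $\eta$ near $0$, and the smallness of $\lambda$ in the case $\|u\|\ge R_2$ of $ii)$; everything else reduces to the coincidence $\tilde J_\lambda\equiv J_\lambda$ on $B_{R_0}(0)$.
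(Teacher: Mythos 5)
Your proposal is correct and follows essentially the same route as the paper: part $i)$ from $(\eta_1)$ and the smoothness of the cutoff, part $ii)$ from the lower bounds $g(\|u\|)$ on the annulus and $\tilde g(t)=\tfrac12 t^2-\lambda C_2 t^q\ge 0$ for $t\ge R_2$ when $\lambda\approx 0^+$, and parts $iii)$--$iv)$ from the coincidence of $\tilde J_\lambda$ and $J_\lambda$ (together with their subdifferential inequalities) on $B_{R_0}(0)$ where $\varphi\equiv 1$. The paper states these steps only in outline, so your write-up simply supplies the details it leaves implicit.
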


\begin{proof}
	Part $i)$ immediately follows by $(\eta_1)$ and the definition of $\tilde{J}_\lambda$.
    Moreover, if $\lambda\approx 0^+$ then
	$$\tilde{g}(t):= \frac{1}{2}t^2-\lambda C_2t^q\geq 0$$
	for every $t\geq R_2$ and $\tilde{J}_\lambda(\|u\|)\geq\tilde{g}(\|u\|)$. Hence, condition $ii)$ holds. The rest of the proof is an easy consequence of $i)$ and $ii)$.
\end{proof}

By using the above notations and results we are able to prove Theorem \ref{CL}.

\begin{proof}[Proof of Theorem \ref{CL}] -	
By Lemma \ref{TL2} it is sufficient to show that $\tilde{J}_\lambda$ has a sequence of critical points $(u_n)$ with $u_n \in B_{R_0}(0)$ for every $n \in \mathbb{N}$. This will be done by showing that $\tilde{J}_\lambda$ satisfies the hypotheses of Theorem \ref{HT}. To this aim, we first notice that $\tilde{J}_\lambda$ is even and $\tilde{J}_\lambda(0)=0$. Moreover $\tilde{J}_\lambda$ is coercive and consequently any ${(\rm PS)}_{c}$ sequence for $\tilde{J}_\lambda$ is bounded. If $(u_n)$ is a ${(\rm PS)}_{c}$ sequence for $\tilde{J}_\lambda$, with $c<0$, then Lemma \ref{TL2} ensures that $(u_n)$ is also a ${(\rm PS)}_{c}$ sequence for $J_\lambda$. Finally, arguing as in Lemma \ref{PSC}, it easily seen that $\tilde{J}_\lambda$ satisfies the ${(\rm PS)}_{c}$ condition for $c<0$. It remains to show that $\tilde{J}_\lambda$ satisfies $i)$ and $ii)$ of Theorem \ref{HT}.\\
	\indent Part $i)$ -  Since $\tilde{J}_\lambda$ satisfies
	$$\tilde{J}_\lambda(u)\geq g(\|u\|)\, \quad \forall u \in H^{1}(\mathbb{R}^N)$$
	and $\tilde{J}_\lambda(u)\geq 0$ for every $\|u\|\geq R_2$, we conclude that $\tilde{J}_\lambda$ is bounded from below. Consequently
	$$ c_j:=\inf_{A \in \Gamma_j}\sup_{u \in A}\tilde{J}_\lambda(u)>-\infty.$$
	\indent Part $ii)$ - For each $k \in \mathbb{N}$, let us consider $Y_k$ and $Z_k$ as in (\ref{DEC}). In this case $\text{dim}Y_k<\infty$ and $Y_k \subset C_0^\infty(\mathbb{R}^N)$. Bearing in mind that
	$$F_1(u)<\infty\,\,\,\forall u \in Y_k,$$
	we infer that $Y_k \subset D(\tilde{J}_\lambda)$ for any $k \in \mathbb{N}$. As $\tilde{J}_\lambda\equiv J_\lambda$ in $B_{R_0}$, one has
	$$\tilde{J}_\lambda(u)=\frac{1}{2}\|u\|^2-\frac{1}{2}\int_{\mathbb{R}^N}u^2\log u^2-\frac{\lambda}{q}\int_{\mathbb{R}^N}|u|^qh(x).$$
Moreover, if $\delta\approx 0^+$
	$$|t|^2|\log t^2|\leq C_1(|t|^{2-\delta}+|t|^{2+\delta})\quad \forall t \in\mathbb{R}$$
	for some $C_1=C_1(\delta)>0$. Consequently
	$$\tilde{J}_\lambda(u)\leq\frac{1}{2}\|u\|^2+C\int_{\mathbb{R}^N}(|u|^{2-\delta}+|u|^{2+\delta})-\frac{\lambda}{q}\int_{\mathbb{R}^N}|u|^qh(x),$$
	for every $u \in B_{R_0}$. 
Now, if $u \in Y_k$ then $u \in L^r(\mathbb{R}^N)$ for every $r \in [1,2)$. Since all the norms on $Y_k$ are equivalent, one has
	\begin{equation}\label{IJ}
	\tilde{J}_\lambda(u)\leq\frac{1}{2}\|u\|^2+C_2(\|u\|^{2-\delta}+\|u\|^{2+\delta})-C\|u\|^q.
	\end{equation}
		for some constant $C_2>0$. Now, for each $k \in \mathbb{N}$, fix $A:=S_\rho(0)\cap Y_k$ with $\rho \approx0^+$. Then $A \in \Sigma$ and $\gamma(A)=k$. By choosing $\delta$ such that $2-\delta>q$, by (\ref{IJ}) it follows that
	$$ \sup_{u \in A}\tilde{J}_\lambda(u)<0,$$
finishing the proof. 
\end{proof}

\subsection{A problem involving the 1-Laplacian operator with subcritical growth}

	In this subsection we study the existence of infinitely many solutions for the following problem
	$$
	\left\{\begin{aligned}
	-&\Delta_1 u = |u|^{p-2}u,  \;\;\mbox{in}\;\;\Omega, \\
	& u|_{\partial\Omega}=0, \;\;\mbox{on}\;\; \partial \Omega,
	\end{aligned}
	\right. \leqno{(P_3)}
	$$
	where $\Omega \subset \mathbb{R}^N$ (with $N\geq 2$) is a bounded domain with smooth boundary $\partial \Omega$ and $p\in (1,1^*)$. In order to simplify the notation, we set $q:=p/(p-1)$.
	
	From now on we denote by $\mathcal{M}(\Omega, \mathbb{R}^N)$ (briefly $\mathcal{M}(\Omega)$) the space of the vectorial Radon measures on $\Omega$ and by $BV(\Omega)$ the space of the functions $u:\Omega\rightarrow\mathbb{R}$ of bounded variation, that is, 
	$$
	BV(\Omega):=\{u \in L^1(\Omega):\,\,Du \in \mathcal{M}(\Omega)\},
	$$
	where $Du$ denotes the distributional derivative of $u$. It is well known that $u \in BV(\Omega)$ if and only if $u \in L^1(\Omega)$ and
	$$
	\int_{\Omega}|Du|=\sup\,\left\{\int_{\Omega}u\text{div}\phi:\,\phi\in C^1_0(\Omega,\mathbb{R}^N),\,\,\mbox{ and }\,\,\|\phi\|_\infty\leq 1)\right\}<\infty.
	$$
	Moreover $BV(\Omega)$ is a Banach space endowed with the norm
	$$\|u\|_{BV(\Omega)}:=\int_{\Omega}|Du|+\int_{\partial \Omega}|u|d\mathcal{H}^{N-1},$$
	where, as usual, $\mathcal{H}^{N-1}$ denotes the $(N-1)$-dimensional Hausdorff measure. We also recall that the continuous embedding
	\begin{equation}\label{EMB}
	BV(\Omega)\hookrightarrow L^r(\Omega),\,\,\, r\in [1,1^*]
	\end{equation}
	is compact provided that $r\in[1,1^*)$; see \cite{Ambrosio, Attouch, Kawohl} for advanced theoretical results on the subject.
	
According to Kawohl-Schuricht \cite{Kawohl} and Degiovanni \cite{Degiovanni} the notion of solution for problem $(P_3)$
can be formulated as follows.

	\begin{definition}\label{SP3}
	We say that a function $u \in BV(\Omega)$ is a solution of $(P_3)$ if there exists $z\in L^\infty(\Omega, \mathbb{R}^N)$ with $\|z\|_\infty\leq 1,$ such that
	\begin{equation*}
		\left\{\begin{aligned}
		 &-\int_\Omega u{\rm div} z =\int_{\Omega}|Du|+\int_{\partial \Omega}|u|d\mathcal{H}^{N-1},\,\,\,\,{\rm div} z\in L^q(\Omega),\\
		 &- {\rm div} z = |u|^{p-2}u\,\,\,\,\text{a.e. in}\,\,\ \Omega,
		\end{aligned}\right.
	\end{equation*}
	where $q:=p/(p-1)$.
	\end{definition}
	
	Now, let us consider the energy functional $I:L^p(\Omega)\rightarrow (-\infty,\infty]$ given by 
	\begin{equation} \label{NOVOFUNCT}
	I(u)=\Phi(u) +\Psi(u),
	\end{equation}
 where
	$$\Phi(u):=-\frac{1}{p}\int_{\Omega}|u|^p$$
	and
	$$\Psi(u):=\left\{\begin{aligned}
	&\int_{\Omega}|Du|+\int_{\partial \Omega}|u|d\mathcal{H}^{N-1}\,\,\, &u \in BV(\Omega)\\
	&\quad \quad \infty\,\,\,\,& u\in L^p(\Omega)\setminus BV(\Omega)
	\end{aligned}\right.,
	$$
for every $u\in L^p(\Omega)$.\par
It is easily seen that $\Phi \in C^1(L^p(\Omega),\mathbb{R})$ as well as $\Psi$ is a convex and l.s.c. functional, so that $I\in (H_0)$. Consequently $D(I)=BV(\Omega)$ and for each fixed $u \in BV(\Omega),$ the subdifferential $\partial \Psi(u)$ can be identified as a subset of $L^q(\Omega)$.\par
 The next results will be crucial in the sequel.
	
	\begin{lemma}\label{LE1}
		If $u \in BV(\Omega)$ and $\partial \Psi(u)\neq \emptyset$ then $u \in L^\infty(\Omega)$.
	\end{lemma}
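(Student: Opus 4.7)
Since $p\in(1,1^{*})$ with $1^{*}=N/(N-1)$, the conjugate exponent $q:=p/(p-1)$ satisfies $q>N$, and any element $\xi\in\partial\Psi(u)$ belongs to $L^{q}(\Omega)$. The plan is to establish the essential boundedness of $u$ via a Stampacchia--type truncation argument that exploits the strict subcriticality $p<1^{*}$.

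For each $k>0$, let $T_{k}u:=\max(-k,\min(u,k))$ be the symmetric truncation at level $k$, and set $G_{k}u:=u-T_{k}u$, so that $G_{k}u$ is supported on $A_{k}:=\{|u|>k\}$. The first step, which is the main technical obstacle, is to verify the additive decomposition
$$
\Psi(u)\;=\;\Psi(T_{k}u)+\Psi(G_{k}u).
$$
The inequality $\le$ is the sub-additivity of $\Psi$ (a consequence of convexity and $1$-homogeneity). The opposite inequality comes from the coarea formula applied to the level sets $\{u>t\}$, splitting the integration according to whether $|t|\leq k$ or $|t|>k$, together with the pointwise identity $|T_{k}u|+|G_{k}u|=|u|$ on $\partial\Omega$, which forces the boundary-trace part of $\Psi$ to decompose as well. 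Care is needed here for sign-changing $u$.

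Next, testing the subdifferential inequality with $v=T_{k}u$ gives
$$
\Psi(T_{k}u)-\Psi(u)\;\geq\;\int_{\Omega}\xi\,(T_{k}u-u)\,dx\;=\;-\int_{A_{k}}\xi\,G_{k}u\,dx,
$$
which combined with the previous decomposition yields $\Psi(G_{k}u)\leq\int_{A_{k}}\xi\,G_{k}u\,dx$. Applying H\"older's inequality (with $1/p+1/q=1$), then H\"older again on the set $A_{k}$ (exploiting $p<1^{*}$), and finally the $BV$--Sobolev--Gagliardo embedding $\|w\|_{L^{1^{*}}(\Omega)}\leq C\,\Psi(w)$, one obtains
$$
\Psi(G_{k}u)\;\leq\;\|\xi\|_{q}\,\|G_{k}u\|_{p}\;\leq\;\|\xi\|_{q}\,|A_{k}|^{\frac{1}{p}-\frac{1}{1^{*}}}\,\|G_{k}u\|_{1^{*}}\;\leq\;C\|\xi\|_{q}\,|A_{k}|^{\frac{1}{p}-\frac{1}{1^{*}}}\,\Psi(G_{k}u).
$$

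To close the argument, observe that either $\Psi(G_{k}u)=0$, or else $|A_{k}|^{1/p-1/1^{*}}\geq 1/(C\|\xi\|_{q})$, a strictly positive lower bound. Since $u$ is a.e.\ finite on $\Omega$, $|A_{k}|\to 0$ as $k\to\infty$, so the second alternative must fail for some $k=k_{0}$ large enough. Therefore $\Psi(G_{k_{0}}u)=0$, which gives $\int_{\Omega}|D G_{k_{0}}u|=0$ and $(G_{k_{0}}u)|_{\partial\Omega}=0$ $\mathcal{H}^{N-1}$-a.e.; since $\Omega$ is connected, $G_{k_{0}}u$ must be a constant whose boundary trace vanishes, hence $G_{k_{0}}u\equiv 0$ a.e., i.e.\ $|u|\leq k_{0}$ a.e., and $u\in L^{\infty}(\Omega)$.
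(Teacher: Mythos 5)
Your argument is correct, and it is in substance the argument the paper relies on: the paper's own proof consists only of the observation that $\partial\Psi(u)\subset L^q(\Omega)\hookrightarrow L^N(\Omega)$ followed by a citation of Degiovanni--Magrone [Proposition 3.3], which is precisely the Stampacchia-type truncation argument you carry out in full. The ingredients you use --- the splitting $\Psi(u)=\Psi(T_ku)+\Psi(G_ku)$ via the coarea formula and the trace identity $|T_ku|+|G_ku|=|u|$, the test $v=T_ku$ in the subdifferential inequality, H\"older, and the embedding $\|w\|_{L^{1^*}(\Omega)}\leq C\,\Psi(w)$ coming from the zero extension of $w$ to $\mathbb{R}^N$ --- are all sound, and the dichotomy at the end closes the argument since $\Psi(G_ku)\leq\Psi(u)<\infty$ and $|A_k|\to 0$. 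Two small remarks. First, you only need the inequality $\Psi(u)\geq\Psi(T_ku)+\Psi(G_ku)$, so the subadditivity half of your ``main technical obstacle'' can be dropped. Second, you exploit $q>N$ to get the quantitative factor $|A_k|^{1/p-1/1^*}$, whereas the paper passes through $\xi\in L^N(\Omega)$; with only $\xi\in L^N$ the same scheme still works because $\|\xi\|_{L^N(A_k)}\to 0$ by absolute continuity of the integral, so your version is a slightly more special (but perfectly valid) instance of the cited result.
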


	\begin{proof}
		We first notice that $L^{1^*}(\Omega) \hookrightarrow L^p(\Omega),$ so that $L^q(\Omega)\hookrightarrow L^N(\Omega)$. Consequently, if \linebreak $w \in \partial \Psi(u) \subset L^q(\Omega)$, one has $w \in L^N(\Omega)$. The conclusion is achieved arguing as in \cite[Proposition 3.3]{Degiovanni}.
	\end{proof}	

	\begin{lemma}\label{LE2}
	If $u \in BV(\Omega)$ then, for each $w \in \partial \Psi(u) ,$ there exists $z \in L^\infty(\Omega,\mathbb{R}^N)$ with $\|z\|_\infty \leq 1,$ such that
		$$
		\left\{\begin{aligned}
		&w = - \text{\rm div} z \in L^q(\Omega)\\
		-&\int_{\Omega} u \text{\rm div} z=\int_{\Omega}|Du|+\int_{\partial \Omega}|u|d\mathcal{H}^{N-1}.
		\end{aligned}\right.
		$$
	\end{lemma}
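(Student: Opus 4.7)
By Lemma \ref{LE1} one has $u\in L^\infty(\Omega)$, so every pairing below is well defined. My strategy is to use Fenchel duality to identify $\partial\Psi(u)$ with the set of $L^q$-divergences of admissible vector fields. Applying the subdifferential inequality $\int_\Omega w(v-u)\,dx\le \Psi(v)-\Psi(u)$ at $v=0$ and at $v=2u$, and using the $1$-homogeneity of $\Psi$, I first obtain the Fenchel--Young type identities
$$
\int_\Omega uw\,dx=\Psi(u),\qquad \int_\Omega vw\,dx\le\Psi(v)\quad\forall v\in L^p(\Omega).
$$

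Next, I would invoke the classical dual representation
$$
\Psi(v)=\sup\left\{-\int_\Omega v\,{\rm div}\,z\,dx:\,z\in L^\infty(\Omega,\mathbb R^N),\;{\rm div}\,z\in L^q(\Omega),\;\|z\|_\infty\le 1\right\}
$$
valid for every $v\in L^p(\Omega)$. For test fields in $C^1(\overline\Omega,\mathbb R^N)$ this is classical for $BV$-functions, and the boundary term $\int_{\partial\Omega}|v|\,d\mathcal H^{N-1}$ appears precisely because these fields are not required to vanish on $\partial\Omega$; the enlargement to the $L^\infty$-class with $L^q$-divergence is handled via the Anzellotti pairing $(z,Dv)$, which yields $-\int_\Omega v\,{\rm div}\,z\,dx\le \Psi(v)$ for every admissible $z$. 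Letting $\mathcal W\subset L^q(\Omega)$ be the set of all such $-{\rm div}\,z$, the set $\mathcal W$ is convex, and it is strongly closed in $L^q(\Omega)$: if $-{\rm div}\,z_n\to w^\star$ in $L^q(\Omega)$, Banach--Alaoglu yields (up to a subsequence) $z_n\rightharpoonup^* z$ in $L^\infty(\Omega,\mathbb R^N)$ with $\|z\|_\infty\le 1$, and passing to the distributional limit shows $-{\rm div}\,z=w^\star\in\mathcal W$.

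Were $w\notin \mathcal W$, Hahn--Banach in the reflexive space $L^q(\Omega)$ would furnish $v_0\in L^p(\Omega)$ with
$$
\int_\Omega w v_0\,dx>\sup_{w'\in\mathcal W}\int_\Omega w' v_0\,dx=\Psi(v_0),
$$
contradicting the second Fenchel--Young inequality. Hence $w=-{\rm div}\,z$ for some admissible $z$, and substituting this into $\int_\Omega uw\,dx=\Psi(u)$ delivers exactly the desired generalized Green's formula
$$
-\int_\Omega u\,{\rm div}\,z\,dx=\int_\Omega |Du|+\int_{\partial\Omega}|u|\,d\mathcal H^{N-1}.
$$
The most delicate point in this plan is verifying that the dual representation of $\Psi$ genuinely includes the boundary contribution when the test fields are only in the enlarged class $L^\infty\cap\{{\rm div}\in L^q\}$: this is what forces the surface integral $\int_{\partial\Omega}|u|\,d\mathcal H^{N-1}$ to appear on the right-hand side, and it requires the Anzellotti pairing together with the trace theorem for $BV(\Omega)$ to be treated with some care.
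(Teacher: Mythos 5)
Your proposal is correct, but it takes a genuinely different route from the paper. The paper's proof is a two-line reduction: it re-poses $\Psi$ as a functional $\tilde{\Psi}$ on $L^{1^*}(\Omega)$, observes that $w\in L^q(\Omega)\hookrightarrow L^N(\Omega)$ (since $q>N$ and $\Omega$ is bounded) so the subdifferential inequality persists in that duality, and then cites \cite[Proposition 4.23]{Kawohl}, which is precisely the characterization of $\partial\tilde{\Psi}(u)$ asserted in the lemma. You instead re-derive that characterization from scratch by convex duality: the Fenchel--Young identities $\int_\Omega uw=\Psi(u)$ and $\int_\Omega vw\le\Psi(v)$ obtained from $1$-homogeneity (testing at $v=0$ and $v=2u$), the dual representation of $\Psi$ as a supremum over admissible vector fields, weak-$*$ compactness to show the set $\mathcal{W}$ of admissible divergences is convex and closed in $L^q(\Omega)$, and Hahn--Banach separation to force $w\in\mathcal{W}$; all of these steps are sound, and you work directly in the $L^p$--$L^q$ duality rather than passing to $L^{1^*}$--$L^N$. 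What each approach buys: the paper's is shorter and outsources every $BV$-specific technicality to the reference, while yours is more transparent about where the boundary integral $\int_{\partial\Omega}|u|\,d\mathcal{H}^{N-1}$ comes from; the price is that the inequality $-\int_\Omega v\,\mathrm{div}\,z\le\Psi(v)$ over the enlarged class $\{z\in L^\infty:\ \mathrm{div}\,z\in L^q,\ \|z\|_\infty\le1\}$ --- which you correctly flag as the delicate point and attribute to the Anzellotti pairing and the $BV$ trace theorem --- is essentially the technical core of the very proposition the paper cites, so your argument is not truly more elementary, only more explicit. If you wanted to include it in the paper you would still need a precise citation (e.g.\ to Anzellotti's pairing theory or to \cite{Ambrosio}) for the generalized Green's formula on which that step rests.
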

	\begin{proof}
		Let us define
		$$
		\tilde{\Psi}(u):=\left\{\begin{aligned}
		&\int_{\Omega}|Du|+\int_{\partial \Omega}|u|d\mathcal{H}^{N-1}\,\,\,\, &u \in BV(\Omega)\\
		&\quad \quad \infty\,\,\,\,\,& u\in L^{1^*}(\Omega)\setminus BV(\Omega)
		\end{aligned}\right.,
		$$	
and take $w \in \partial \Psi(u) \subset L^q(\Omega)$. Then $ w\in L^N(\Omega)$ and
		$$\begin{aligned}
		\tilde{\Psi}(v)-\tilde{\Psi}(u)=\Psi(v)-\Psi(u) \geq\int_{\Omega}w(v-u)\,\,\, \quad \forall v\in BV(\Omega)=D(\tilde{\Psi}),
		\end{aligned}
		$$
		so that $w \in \partial \tilde{\Psi}(u)$. The conclusion follows by \cite[Proposition 4.23]{Kawohl}.
	\end{proof}

	The next result connects critical points of the energy functional $I$ with solutions of $(P_3)$.
	
	\begin{lemma}
		If $u \in BV(\Omega)$ is a critical point of the functional $I$ then $u \in L^\infty(\Omega)$. Moreover, the function $u$ is a solution of $(P_3)$ in the sense of Definition \ref{SP3}.
	\end{lemma}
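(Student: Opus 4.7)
The plan is to show that the definition of critical point for $I=\Phi+\Psi$ reduces, in this concrete situation, to the subdifferential inclusion $|u|^{p-2}u\in\partial\Psi(u)$, and then to read off both conclusions from Lemmas \ref{LE1} and \ref{LE2}. First, I would compute $\Phi'(u)$ explicitly: since $\Phi(u)=-\frac{1}{p}\int_\Omega|u|^p$ on $L^p(\Omega)$, one has $\Phi'(u)=-|u|^{p-2}u\in L^q(\Omega)=(L^p(\Omega))^*$ (with $q=p/(p-1)$), and this is well defined for every $u\in L^p(\Omega)$, in particular for $u\in BV(\Omega)\subset L^{1^*}(\Omega)\subset L^p(\Omega)$ thanks to the embedding \eqref{EMB} and $p\in(1,1^*)$.

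Next, I would write out the critical point condition from Definition \ref{cp}: $u\in D(I)=BV(\Omega)$ and
\begin{equation*}
\langle\Phi'(u),v-u\rangle+\Psi(v)-\Psi(u)\geq 0\qquad\forall\,v\in L^p(\Omega),
\end{equation*}
which, substituting $\Phi'(u)=-|u|^{p-2}u$ and rearranging, is exactly
\begin{equation*}
\Psi(v)-\Psi(u)\geq\int_\Omega|u|^{p-2}u\,(v-u)\qquad\forall\,v\in L^p(\Omega).
\end{equation*}
Recalling the definition of the subdifferential of a convex functional on p.\pageref{sd}, this says precisely that $w:=|u|^{p-2}u\in\partial\Psi(u)$. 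In particular $\partial\Psi(u)\neq\emptyset$, and Lemma \ref{LE1} immediately gives the first conclusion $u\in L^\infty(\Omega)$.

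Finally, I would apply Lemma \ref{LE2} to the element $w=|u|^{p-2}u\in\partial\Psi(u)$: this produces a field $z\in L^\infty(\Omega,\mathbb{R}^N)$ with $\|z\|_\infty\leq 1$ such that
\begin{equation*}
-\operatorname{div}z\,=\,|u|^{p-2}u\in L^q(\Omega),\qquad -\int_\Omega u\,\operatorname{div}z\,=\,\int_\Omega|Du|+\int_{\partial\Omega}|u|\,d\mathcal H^{N-1},
\end{equation*}
which are exactly the two conditions in Definition \ref{SP3}. Hence $u$ is a solution of $(P_3)$, completing the proof. There is no genuine obstacle here: all the analytic content (the regularity bootstrap of Lemma \ref{LE1} and the pairing/integration-by-parts of Lemma \ref{LE2}) has already been established, and my only task is the bookkeeping step of recognising that the critical point condition for $I$ is literally the membership $|u|^{p-2}u\in\partial\Psi(u)$.
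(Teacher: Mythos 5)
Your proposal is correct and follows essentially the same route as the paper: both identify the critical point condition with the inclusion $|u|^{p-2}u=-\Phi'(u)\in\partial\Psi(u)$ and then invoke Lemma \ref{LE1} for the $L^\infty$ bound and Lemma \ref{LE2} for the vector field $z$ realising Definition \ref{SP3}. Your write-up is in fact slightly more explicit than the paper's, since you spell out why the critical point inequality is literally the subdifferential membership, but the mathematical content is identical.
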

	\begin{proof}
		Let $u \in BV(\Omega)$ be a critical point of $I$. Then
		$$-\Phi'(u)\in \partial \Psi(u)\subset L^q(\Omega).$$
		Thereby, there exists $w \in \partial \Psi(u)$ such that
		$$ -\Phi'(u)= w\,\,\,\, \text{in}\,\,\,L^q(\Omega).$$
		
	\noindent Consequently, Lemma \ref{LE2} and the definition of $\Phi$ yield the existence of $z \in L^\infty(\Omega, \mathbb{R}^N)$, with $\|z\|_\infty\leq 1$, such that $- {\rm div} z = w$ in $L^q(\Omega)$ and
		$$
		\left\{\begin{aligned}
		&-\int_\Omega u\text{\rm div} z =\int_{\Omega}|Du|+\int_{\partial \Omega}|u|d\mathcal{H}^{N-1},\,\,\,\text{\rm div} z\in L^q(\Omega)\\
		&- \text{\rm div} z = |u|^{p-2}u\,\,\,\text{a.e. in}\,\,\ \Omega.
		\end{aligned}\right.
		$$
	Moreover, Lemma \ref{LE1} ensures that $u \in L^\infty(\Omega)$ and the proof is over. 
	\end{proof}
	
	By using the above notations and results we are able to prove the main result of this subsection.
	
	\begin{theorem}\label{ultimo}
		The functional $I$ has infinitely many critical points $(u_n)$ with $I(u_n) \rightarrow \infty$ as $n\rightarrow \infty$. Hence, problem $(P_3)$ has infinitely many nontrivial solutions. 
	\end{theorem}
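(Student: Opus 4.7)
The plan is to apply Theorem \ref{SP} with ambient space $X = L^p(\Omega)$, group $G = \mathbb{Z}_2 = \{Id,-Id\}$ acting antipodally on $X$, and the functional $I = \Phi + \Psi$ from \eqref{NOVOFUNCT}. The membership $I \in (H_0)$ is already established; both $\Phi(u) = -\frac{1}{p}\|u\|_p^p$ and $\Psi$ are even and hence $G$-invariant, and $I(0) = 0$. Since $G$ is finite, $(G_*)$ follows from Proposition \ref{GP}(v). For $(G_0)$, I fix a Schauder basis $(e_j)_{j\in\mathbb{N}} \subset C_0^{\infty}(\Omega)$ of $L^p(\Omega)$ (for instance, the $L^2$-orthonormal Dirichlet eigenfunctions of $-\Delta$, which form a Schauder basis of $L^p(\Omega)$ for $1 < p < \infty$) and set $X_j := \mathrm{span}\{e_j\}$; each $X_j$ is one-dimensional and $\mathbb{Z}_2$-invariant, and the antipodal action of $\mathbb{Z}_2$ on $\mathbb{R}$ is admissible.

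For the linking geometry I take $Y := \{0\}$ and $Z := L^p(\Omega)$, so $X = Y \oplus Z$ trivially. Using the continuous embedding $BV(\Omega) \hookrightarrow L^p(\Omega)$ with some constant $C > 0$, for $u \in BV(\Omega)$ with $\|u\|_p = r$ one has $I(u) \geq \|u\|_{BV} - \frac{r^p}{p} \geq \frac{r}{C} - \frac{r^p}{p}$, while $I(u) = +\infty$ for $u \in L^p(\Omega) \setminus BV(\Omega)$. Since $p > 1$, choosing $r$ sufficiently small yields $I|_{\partial B_r(0) \cap Z} \geq \rho > 0$. For condition $(ii)$ of Theorem \ref{SP}, for each $k$ I set $X_k := \mathrm{span}\{e_1,\ldots,e_k\} \subset C_0^{\infty}(\Omega)$; equivalence of norms on the finite-dimensional space $X_k$ gives $\|u\|_{BV} \leq C_k \|u\|_p$, whence $I(u) \leq C_k \|u\|_p - \frac{1}{p}\|u\|_p^p \to -\infty$ as $\|u\|_p \to \infty$ in $X_k$.

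The $(\mathrm{PS})$ condition is the main technical point. Let $(u_n)$ be a $(\mathrm{PS})_c$ sequence; for large $n$ we have $u_n \in BV(\Omega) = D(I)$. Testing the $(\mathrm{PS})$ inequality with $v = 0$ and $v = 2u_n$, and exploiting the positive $1$-homogeneity of $\Psi$ on $BV(\Omega)$ (i.e.\ $\Psi(2u_n) = 2\Psi(u_n)$), I obtain the two-sided bound $\bigl|\Psi(u_n) - \|u_n\|_p^p\bigr| \leq \varepsilon_n \|u_n\|_p$. Substituting into $\Psi(u_n) - \frac{1}{p}\|u_n\|_p^p = I(u_n) \to c$ yields $(1 - 1/p)\|u_n\|_p^p \leq |c| + o(1) + \varepsilon_n \|u_n\|_p$, so $(u_n)$ is bounded in $L^p(\Omega)$, and then $\Psi(u_n)$ is bounded, hence $\|u_n\|_{BV}$ is bounded. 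Since $p < 1^*$, the compact embedding \eqref{EMB} produces a subsequence strongly convergent in $L^p(\Omega) = X$, which proves $(\mathrm{PS})$.

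Finally, I verify the extra condition that delivers $c_j \to \infty$. If $u \in D(I)$ is any critical point of $I$, the same two choices $v = 0$ and $v = 2u$ in the critical-point inequality, together with the homogeneity of $\Psi$, yield $\Psi(u) = \|u\|_p^p$, so $I(u) = (1 - 1/p)\|u\|_p^p \geq 0$. Therefore, for every $c_0 > 0$, the sublevel set $I^{-c_0} = I^{-1}((-\infty,-c_0])$ contains no critical points, and Theorem \ref{SP} produces a sequence of critical values $c_n \to +\infty$ with associated critical points $u_n$; each $u_n$ is a nontrivial solution of $(P_3)$ by the previous lemma. The main obstacle, resolved by the above argument, is handling the non-reflexivity of $BV(\Omega)$ by setting up the full variational framework on the reflexive space $L^p(\Omega)$ while leveraging the $1$-homogeneity of $\Psi$ both to extract compactness and to pin down the sign of critical values.
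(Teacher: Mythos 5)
Your proposal is correct and follows essentially the same route as the paper: apply Theorem \ref{SP} on $X=L^p(\Omega)$ with $G=\mathbb{Z}_2$, $Y=\{0\}$, $Z=L^p(\Omega)$, verify the geometry via the embedding $BV(\Omega)\hookrightarrow L^p(\Omega)$ and norm equivalence on finite-dimensional subspaces of $C_0^\infty(\Omega)$, and show every critical point satisfies $I(u)=(1-1/p)\Psi(u)\geq 0$ so that $I^{-c_0}$ is critical-point free. The one place you genuinely diverge is the boundedness of $(\mathrm{PS})$ sequences: the paper first identifies $|u_n|^{p-2}u_n+w_n\in\partial\Psi(u_n)$ with a vector field via Lemma \ref{LE2} to get the exact identity $\Psi(u_n)=\int_\Omega|u_n|^p+\int_\Omega w_nu_n$, and then runs an $I(u_n)-\frac{1}{r}A(u_n)$ estimate with $r<p$; you instead test the $(\mathrm{PS})$ inequality at $v=0$ and $v=2u_n$ and use the positive $1$-homogeneity of $\Psi$ to get the two-sided bound $|\Psi(u_n)-\|u_n\|_p^p|\leq\varepsilon_n\|u_n\|_p$ directly. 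Your variant is slightly more self-contained (it does not need the $L^\infty$ vector-field representation at that stage, only convexity and homogeneity) and yields the same conclusion; the paper's route has the advantage of producing the exact Euler identity that it reuses verbatim for the sign of critical values. Two minor points: the paper's Lemma \ref{LE2} is still needed elsewhere (to show critical points solve $(P_3)$), so you cannot dispense with it entirely; and your parenthetical example for $(G_0)$ is off, since Dirichlet eigenfunctions of $-\Delta$ are not compactly supported and their Schauder-basis property in $L^p$ for $p\neq2$ is delicate --- but for $(G_0)$ it suffices to take any linearly independent sequence in $C_0^\infty(\Omega)$ with dense span, which exists by separability and density, so this does not affect the argument.
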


	\begin{proof}
	Hereafter, we are going to prove that $I$ verifies the assumptions of Theorem \ref{SP} with $Y=\{0\}$. We first prove that $I$ satisfies the compactness $(\rm PS)$ condition. To this end, let $(u_n)$ be a $(\rm PS)$ sequence for $I$. So, for some $c \in \mathbb{R}$, one has
		$$I(u_n)\rightarrow c$$
		and
		$$\Psi(v)-\Psi(u_n)\geq \int_{\Omega}|u_n|^{p-2}u_n(v-u_n)+\int_{\Omega}w_n(v-u_n),\,\quad \forall v \in BV(\Omega),$$
		where $w_n \in L^q(\Omega)$ and $w_n\rightarrow 0$ in $L^q(\Omega)$. The last inequality gives
		$$|u_n|^{p-2}u_n +w_n \in\partial \Psi(u_n),\,\,\,\,\forall n \in \mathbb{N}.$$
		Hence, Lemma \ref{LE2} yields
		$$\Psi(u_n)=\int_{\Omega}|Du_n|+\int_{\partial \Omega}|u_n|d\mathcal{H}^{N-1}=\int_{\Omega}|u_n|^p+\int_{\Omega}w_nu_n, \quad \forall n \in \N.$$
If we set
		$$A(u_n):=\Psi(u_n)-\int_{\Omega}|u_n|^p+\int_{\Omega}w_nu_n=0,$$
	the classical Hölder's inequality leads to
		$$\begin{aligned}
		c+1&\geq I(u_n)-\frac{1}{r}A(u_n)\\
		&\geq\left(1-\frac{1}{r}\right)\Psi(u_n)+\left(\frac{1}{r}-\frac{1}{p}\right)\|u_n\|^p_{L^p(\Omega)}-\frac{1}{r}\|w_n\|_{L^q(\Omega)}\|u_n\|_{L^p(\Omega)}\\
		&\geq C_1\|u_n\|_{BV(\Omega)}+C_2\left(\|u_n\|_{L^p(\Omega)}^p-\|u_n\|_{L^p(\Omega)}\right),
		\end{aligned}
	$$
 for some $r<p$ and $n$ large enough. Since the real function $h(t):=t^p-t$, for every $t\geq 0$, is bounded from below, the last inequality clearly implies  $\sup_{n \in \mathbb{N}}\|u_n\|_{BV(\Omega)}<\infty.$ Therefore the $(\rm PS)$ condition is verified, since the embedding $BV(\Omega)\hookrightarrow L^p(\Omega)$ is compact. Now, if $u \in BV(\Omega)$ is a critical point of $I$ then
		$$|u|^{p-2}u \in \partial \Psi(u).$$
Consequently, by Lemma \ref{LE2},
		$$\int_\Omega |u|^p =\int_{\Omega}|Du|+\int_{\partial \Omega}|u|d\mathcal{H}^{N-1}.$$
		Thereby, by setting
		$$ B(u)= \int_{\Omega}|Du|+\int_{\partial \Omega}|u|d\mathcal{H}^{N-1}-\int_\Omega |u|^p,$$
		one has
		$$I(u)=I(u)-\frac{1}{p}B(u)=\left(1-\frac{1}{p}\right)\|u\|_{BV(\Omega)}\geq0,$$
for every $u\in L^p(\Omega)$. Hence, the set $I^{-c}$ has no critical points for any $c>0$.\par
 \noindent Finally, let us prove that the functional $I$ satisfies conditions $i)$ and $ii)$ of Theorem \ref{SP}.\\
		\indent Part $i)$ - Without loss of generality we can suppose $u\in BV(\Omega)$, otherwise $I(u)=\infty$. Now, if $u \in BV(\Omega)$, the embedding $BV(\Omega)\hookrightarrow L^p(\Omega)$ immediately yields
		$$I(u)\geq C\|u\|_{L^p(\Omega)}-\frac{1}{p}\|u\|_{L^p(\Omega)}^p,$$
		for some constant $C>0$. Since $p>1$, if $\|u\|_{L^p(\Omega)}=r\approx0^+$, we also have  
		$$I(u)\geq \rho,$$
	for some $\rho>0$. Thus, condition $i)$ of Theorem \ref{SP} is proved with $Z=L^p(\Omega)$.\\
		\indent Part $ii)$ - For each $k \in \mathbb{N}$, let us consider $X_k$ be a $k$-dimensional subspace of $C_0^\infty(\Omega)$. Since all the norms are equivalent on $X_k$, it easily seen that
		$$I(u)\leq C_k\|u\|_{L^p(\Omega)}-\frac{1}{p}\|u\|_{L^p(\Omega)}^p\,\,\,\,\forall u \in X_k,$$
		for a convenient $C_k>0$. Thus
		$$I(u)\rightarrow -\infty,\,\,\,\text{as}\,\,\,\|u\|_{L^p(\Omega)}\rightarrow \infty\, \text{and}\,u \in X_k,$$
finishing the proof. 
	\end{proof}	

\vspace{0.2 cm}

\noindent {\bf Acknowledgments:} The authors would like to thank the anonymous referee by his/her valuable suggestions, which were very important to improve the paper.

	
\end{document}